%13/04/2012

\documentclass[10pt]{article}
\usepackage{amsmath, epsfig}
\usepackage{latexsym}
\usepackage{amssymb}
\usepackage{color}
\usepackage{mathrsfs}

\def\l{\lambda}
\def\De{\Delta}

\topmargin =-0.4 in \oddsidemargin=0.25 in
\setlength{\textheight}{23.5 cm} \setlength{\textwidth}{16 cm}
\setlength{\unitlength}{5 mm}
\makeatother

\newtheorem{thm}{Theorem}[section]
\newcommand{\n}{\noindent}
\newtheorem{conj}[thm]{Conjecture}
\newtheorem{lemma}[thm]{Lemma}
\newtheorem{question}[thm]{Question}

\newtheorem{defn}[thm]{Definition}

\newtheorem{notn}[thm]{Notation}
\newenvironment{proof}{{\bf Proof}.}{\rule{3mm}{3mm}}

\usepackage{color}

\definecolor{Thistle}{rgb}{0.847,0.749,0.847}
\definecolor{Khaki}{rgb}{0.941,0.902,0.549}
\definecolor{Orchid}{rgb}{0.855,0.439,0.839}
\definecolor{MediumOrchid}{rgb}{0.729,0.333,0.827}

\definecolor{brown}{rgb}{0.8,0.5,0}
\definecolor{LightBrown}{rgb}{0.8,0.2,0.4}

\definecolor{DarkGray}{rgb}{0.78,0.78,0.78}
\definecolor{DarkMidGray}{rgb}{0.81,0.81,0.81}
\definecolor{MidGray}{rgb}{0.85,0.85,0.85}
\definecolor{LightGray}{rgb}{0.88,0.88,0.88}
\definecolor{VeryLightGray}{rgb}{0.96,0.96,0.96}

\definecolor{GrayA}{rgb}{0.7,0.7,0.7}
\definecolor{GrayB}{rgb}{0.78,0.78,0.78}
\definecolor{GrayC}{rgb}{0.80,0.80,0.80}
\definecolor{GrayD}{rgb}{0.82,0.82,0.82}
\definecolor{GrayE}{rgb}{0.84,0.84,0.84}
\definecolor{GrayF}{rgb}{0.86,0.86,0.86}
\definecolor{GrayG}{rgb}{0.88,0.88,0.88}
\definecolor{GrayH}{rgb}{0.90,0.90,0.90}
\definecolor{GrayI}{rgb}{0.92,0.92,0.92}
\definecolor{GrayJ}{rgb}{0.94,0.94,0.94}

\definecolor{VeryLightBlue}{rgb}{0.9,0.9,1}
\definecolor{LightBlue}{rgb}{0.8,0.8,1}
\definecolor{MidBlue}{rgb}{0.5,0.5,1}
\definecolor{DarkBlue}{rgb}{0,0,0.6}

\definecolor{Gold}{rgb}{1,0.843,0}

\definecolor{LightGreen}{rgb}{0.88,1,0.88}
\definecolor{MidGreen}{rgb}{0.6,1,0.6}
\definecolor{DarkGreen}{rgb}{0,0.6,0}

\definecolor{VeryLightYellow}{rgb}{1,1,0.9}
\definecolor{LightYellow}{rgb}{1,1,0.6}
\definecolor{MidYellow}{rgb}{1,1,0.5}
\definecolor{DarkYellow}{rgb}{1,1,0.2}

\definecolor{VeryLightRed}{rgb}{1,0.9,0.9}
\definecolor{LightRed}{rgb}{1,0.8,0.8}
%\definecolor{MidRed}{rgb}{1,0.7,0.7}
\definecolor{MidRed}{rgb}{1,0.55,0.55}

\newcommand{\BLUE}[1]{\textcolor{blue}{#1}}

\newcommand{\sm}[1]{{\BLUE{\bf [Sanming: #1]}}}

\long\def\delete#1{}

\begin{document}

\title{\Large \bf Labeling outerplanar graphs with maximum degree three}

\author{
Xiangwen Li$^{1}$~\thanks{Supported by the
Natural Science Foundation of China (11171129), email: xwli68@yahoo.cn; xwli2808@yahoo.com}\;\, and\;\, Sanming Zhou$^2$~\thanks{Supported by a Future Fellowship (FT110100629) and a Discovery Project Grant (DP120101081) of the Australian Research Council, email: sanming@unimelb.edu.au}\\
{\small $^1$Department of Mathematics}\\
{\small Huazhong Normal University, Wuhan 430079, China}\\
{\small $^2$Department of Mathematics and Statistics}\\
{\small The University of Melbourne, Parkville, VIC 3010, Australia}}

\date{April 19, 2012}

\maketitle

\begin{abstract}
An $L(2, 1)$-labeling of a graph $G$ is an assignment of a nonnegative integer to each vertex of $G$ such that adjacent vertices receive integers that differ by at least two and vertices
at distance two receive distinct integers. The span of such a labeling is the difference between the largest and smallest integers used. The $\lambda$-number of $G$, denoted by $\lambda(G)$, is the minimum span over all $L(2, 1)$-labelings of $G$.
Bodlaender {\it et al.} conjectured that if $G$ is an outerplanar graph of maximum degree
$\Delta$, then $\lambda(G)\leq \Delta+2$. Calamoneri and Petreschi proved that this conjecture is true when $\Delta \geq 8$ but false when $\Delta=3$. Meanwhile, they proved that $\lambda(G)\leq \Delta+5$ for any outerplanar graph $G$ with $\Delta=3$ and asked whether or not this bound is sharp. In this paper we answer this question by proving that $\lambda(G)\leq \Delta + 3$ for every outerplanar graph with maximum degree $\Delta=3$. We also show that this bound $\Delta + 3$ can be achieved by infinitely many outerplanar graphs with $\Delta=3$.
\vspace{2mm}

\noindent \textbf{Key words}: $L(2, 1)$-labeling; outerplanar graphs; $\lambda$-number
 \vspace{4mm}

\noindent \textbf{AMS subject classification}: 05C15, 05C78
\end{abstract}

\section{Introduction}

In the channel assignment problem \cite{Hale} one wishes to assign channels to
transmitters in a radio communication network such that the bandwidth used
is minimized whilst interference is avoided as much as possible.
Various constraints have been suggested to put on channel
separations between pairs of transmitters within certain distances, leading to several important
optimal labeling problems which are generalizations of the ordinary graph
coloring problem. Among them the $L(2,1)$-labeling problem \cite{GY} has received most
attention in the past two decades.

Given integers $p \ge q \ge 1$, an {\it $L(p, q)$-labeling} of a graph $G=(V(G), E(G))$ is a mapping $f$ from $V(G)$ to the set of nonnegative integers such that $|f(u)-f(v)|
\geq p$ if $u$ and $v$ are adjacent in $G$, and
$|f(u)-f(v)|\geq q$ if $u$ and $v$ are distance two apart in
$G$. The integers used by $f$ are called the {\it labels}, and the
{\it span} of $f$ is the difference between the largest and smallest labels used by $f$.
The {\it $\lambda_{p,q}$-number} of $G$, $\lambda_{p,q}(G)$, is the minimum
span over all $L(p, q)$-labelings of $G$. We may assume without loss of generality that the smallest label used is 0, so that $\lambda_{p,q}(G)$ is equal to the minimum value among the largest labels used by $L(p, q)$-labelings of $G$.
In particular, $\l(G)= \l_{2,1}(G)$ is called the {\em $\l$-number} of $G$.
For a nonnegative integer $k$, a {\it $k$-$L(2, 1)$-labeling} is
an $L(2, 1)$-labeling with maximum label at most $k$. Thus
$\lambda (G)$ is the minimum $k$ such that $G$ admits a $k$-$L(2, 1)$-labeling.

The $L(p, q)$-labelling problem has received extensive attention over many years especially in the case when $(p,q)=(2,1)$ (see \cite{Ca} for a survey). Griggs and Yeh \cite{GY} conjectured that $\lambda(G) \le \Delta^2$ for any graph $G$ with maximum degree $\Delta \ge 2$. This has been confirmed for several classes of graphs, including chordal graphs \cite{Sakai}, generalized Petersen graphs \cite{GM}, Hamiltonian graphs with $\Delta \le 3$ \cite{Kang}, etc. Improving earlier results, Goncalves \cite{G} proved that $\lambda(G) \le \Delta^2+\Delta-2$ for any graph $G$ with $\Delta \ge 2$. Recently, Havet, Reed and Sereni \cite{HRS} proved that for any $p \ge 1$ there exists a constant $\Delta(p)$ such that every graph with maximum degree $\Delta \ge \Delta(p)$ has an $L(p, 1)$-labelling with span at most $\Delta^2$. In particular, this implies that the Griggs-Yeh conjecture is true for any graph with sufficiently large $\Delta$.

The $\lambda$-number of a graph relies not only on its maximum degree but also on its structure. It is thus important to understand this invariant for different graph classes. In particular, for the class of planar graphs, Molloy and Salavatipour \cite{Molloy} proved that $\l_{p,q}(G) \le q \lceil 5\De/3 \rceil + 18p + 77q - 18$, which yields $\l(G) \le \lceil 5\De/3 \rceil + 77$, and Bella {\it et al.} \cite{BKMQ} proved that the Griggs-Yeh conjecture is true for planar graphs with $\De \ne 3$. (See \cite{BKMQ} for a brief survey on the $L(p,q)$-labeling problem for planar graphs.) This indicates that the class of planar graphs with $\De = 3$ may require a special treatment. In \cite{BO}, Bodlaender {\it et al.}  proposed the following conjecture.

\begin{conj}
\label{conj1} For any outerplanar graph $G$ of maximum degree $\Delta$, $\lambda(G)\leq
\Delta+2$.
\end{conj}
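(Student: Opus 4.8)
The plan is to prove $\l(G)\le\De+2$ by induction on $|V(G)|$, using the label set $\{0,1,\dots,\De+2\}$ and exploiting the fact that outerplanar graphs are $2$-degenerate and $K_{2,3}$-minor-free. First I would reduce to the case that $G$ is $2$-connected: if $v$ is a cut vertex, split $G$ at $v$ into the blocks $B_1,\dots,B_t$ meeting at $v$, label each $B_i$ by induction, and then independently permute the labels used on each block so that all the block-labelings agree on the chosen value $f(v)$, while the neighbours of $v$ lying in different blocks (which are pairwise at distance two through $v$) receive distinct labels and each differs from $f(v)$ by at least two. Since $\deg(v)\le\De$ and $\De+3$ labels are available, such a simultaneous adjustment can be arranged, so it suffices to treat $2$-connected $G$.

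For $2$-connected outerplanar $G$ I would use the outer Hamiltonian cycle $C=v_1v_2\cdots v_n$ together with its chords, whose weak dual (the interior faces) forms a tree $T$. A leaf face of $T$ is bounded by a single chord $v_iv_j$ and a subpath of $C$ whose interior vertices have degree two; this yields a reducible configuration, namely a degree-$2$ vertex $v$ whose two neighbours lie on one such ``ear''. Deleting $v$ gives a smaller outerplanar graph $G'$ with $\De(G')\le\De$, to which the induction hypothesis applies.

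The heart of the argument is extending a labeling $f'$ of $G'$ back to $v$. The labels forbidden for $v$ are those differing by at most one from the label of either of its two neighbours (at most six values) together with the labels of all vertices at distance two from $v$. A crude count gives on the order of $2\De$ forbidden labels, far exceeding the budget of $\De+3$, so this step must use outerplanarity decisively: because $G$ is $K_{2,3}$-minor-free the two neighbours of $v$ share very few common neighbours, and by choosing the elimination order so that the closed neighbourhood of each high-degree vertex is labeled as a block, one can hope to show that at the moment $v$ is treated only $\De+2$ labels are genuinely excluded, leaving at least one admissible value. Driving this count down from the trivial $\approx 2\De$ to the sharp $\De+2$, and thereby consuming exactly the slack that large $\De$ provides, is the main technical content.

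The hard part will be the small-degree cases, and above all $\De=3$. There only the six labels $\{0,\dots,5\}$ are available, degree-$2$ and degree-$3$ vertices are packed so tightly along the ears that the common-neighbour and ordering savings evaporate, and the extension count no longer closes. This is not a mere defect of the method: as the abstract records, the conjecture is in fact \emph{false} for $\De=3$, so no purely local extension argument can succeed there, and the approach above can at best establish $\l(G)\le\De+2$ for $\De$ above a threshold. Pinning down precisely which $\De=3$ ears obstruct the $\De+2$ bound---and hence by how much the bound must be relaxed for small $\De$---is therefore the decisive issue, and it is exactly what separates the provable range of the conjecture from its genuine counterexamples.
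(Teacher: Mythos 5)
There is a fundamental problem here: the statement you set out to prove is stated in the paper as a \emph{conjecture}, and the paper itself shows it is false in the one regime the paper studies. Theorem~\ref{lower}, combined with the upper bound of Theorem~\ref{thm1}, gives $\lambda(G(l)) = 6 = \Delta+3 > \Delta+2$ for every $l \ge 4$ not divisible by $3$, so infinitely many outerplanar graphs with $\Delta = 3$ violate the conjectured bound. Your final paragraph concedes exactly this, which means your text is not a proof of the statement at all but a plan that refutes its own goal; no repair is possible at $\Delta=3$, and for $\Delta \ge 8$ the bound $\Delta+2$ is a cited theorem of Calamoneri and Petreschi (and of Liu and Zhu for $\Delta \ge 15$), not something your sketch establishes. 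The paper's actual contribution concerning this conjecture is orthogonal to your plan: it proves the correct sharp bound $\Delta+3$ for $\Delta=3$ via the two extension techniques of Theorems~\ref{le43} and~\ref{le44}, propagating extendable labelings across the interior faces of a $2$-connected outer plane graph, together with the component analysis of $H_1$ and $H_2$ that yields the lower bound for $G(l)$.

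Even read as a conditional sketch for large $\Delta$, two of your steps fail concretely. First, the cut-vertex reduction asks to ``independently permute the labels used on each block'' so that the block labelings agree at $f(v)$; but an arbitrary permutation of label values does \emph{not} preserve an $L(2,1)$-labeling, since the condition $|f(u)-f(w)|\ge 2$ is metric, not merely injective --- the only nontrivial value symmetry available is $x \mapsto (\Delta+2)-x$, which is what the paper exploits in Section~\ref{sec:lower} when it notes the symmetry between $H_1$ and $H_2$. With only this involution you cannot in general realign $t$ block labelings at $v$ while keeping the neighbours of $v$ in distinct blocks pairwise separated, so your reduction to the $2$-connected case is already broken (the paper's Claim in Section~\ref{sec:ubound} handles cut structures quite differently, by re-extending labelings along branches using Lemma~\ref{le42} and Theorems~\ref{le43} and~\ref{le44}, never by permuting). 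Second, the heart of your induction --- driving the forbidden-label count at the deleted ear vertex from roughly $2\Delta$ down to $\Delta+2$ --- is exactly the step you leave as ``one can hope to show.'' That count is the entire technical content, it genuinely does not close by deleting a single degree-$2$ vertex (which is why the known $\Delta \ge 8$ proofs analyze intersecting interior faces structurally), and as the $G(l)$ family shows, at $\Delta=3$ no local extension argument of this kind can succeed because the bound itself is false there.
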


Bodlaender {\it et al.} \cite{BO} themselves proved that
$\lambda(G)\leq \Delta+8$ for any outerplanar graph $G$ and $\lambda(G)\leq\Delta+6$ for any triangulated outerplanar graph. Calamoneri and Petreschi~\cite{CR} proved that Conjecture~\ref{conj1} is true for any outerplanar graph with $\Delta\geq 8$. At the same time, as a corollary of their results on circular distance two labelings, Liu and Zhu~\cite{ZHU} proved that this conjecture is true for outerplanar graphs with $\Delta\geq 15$. In \cite{CR}, Calamoneri and Petreschi also proved that $\lambda\leq\Delta+1$ for any triangulated outerplanar graph with $\Delta\geq 8$. Meanwhile, they gave \cite{CR} a couterexample to show that Conjecture~\ref{conj1} is false when $\Delta=3$, indicating again that the case of maximum degree three may require a special treatment. Moreover, for any outerplanar graph $G$ with $\Delta=3$, they proved \cite{CR} that $\lambda(G)\leq \Delta+5$ and $\lambda(G)\leq \Delta+4$ if in addition $G$ is triangle-free. Motivated by these results they asked \cite{CR} the following question.
\begin{question}
Is the upper bound $\lambda(G)\leq \Delta+5$ tight for outerplanar graphs with $\Delta=3$?
\end{question}

Although empirical results \cite{BH} suggest that $\Delta+5$ may be improved for some sample outerplanar graphs with $\Delta=3$, to our best knowledge the question above is still open, and no sharp upper bound on $\lambda(G)$ is known for all outerplanar graphs with $\Delta=3$.  

In this paper we answer the question above by proving that $\lambda(G)\leq \Delta + 3 = 6$  for every outerplanar graph with $\Delta=3$. Moreover, we prove that this bound can be achieved by infinitely many outerplanar graphs with $\Delta=3$. This extends the single outerplanar graph with $\Delta=3$ and $\lambda = 6$ given in \cite{CR} to an infinite family of such extremal graphs. A typical member in this family is the graph $G(l)$ such that $l \ge 4$ is not a multiple of $3$, where $G(l)$, depicted in Figure \ref{fig:gl}, is defined as follows. (Note that $G(4)$ is the graph in \cite[Fig. 8]{CR}.)  

\begin{defn}
{\em Let $l \ge 3$ be an integer. Define $G(l)$ to be the graph with vertex set $\{u, v, x_1, x_2, \ldots, x_l, y_1, \\ y_2, \ldots, y_l\}$ and edge set $\{x_ix_{i+1}, y_i y_{i+1}: 1\leq i\leq l-1\} \cup\{ux_1,uy_1,vx_l, vy_l\} \cup \{x_iy_i: 1\leq i\leq l\}$.} 
\end{defn}

\begin{figure}[ht]
\centering
\includegraphics*[height=10cm]{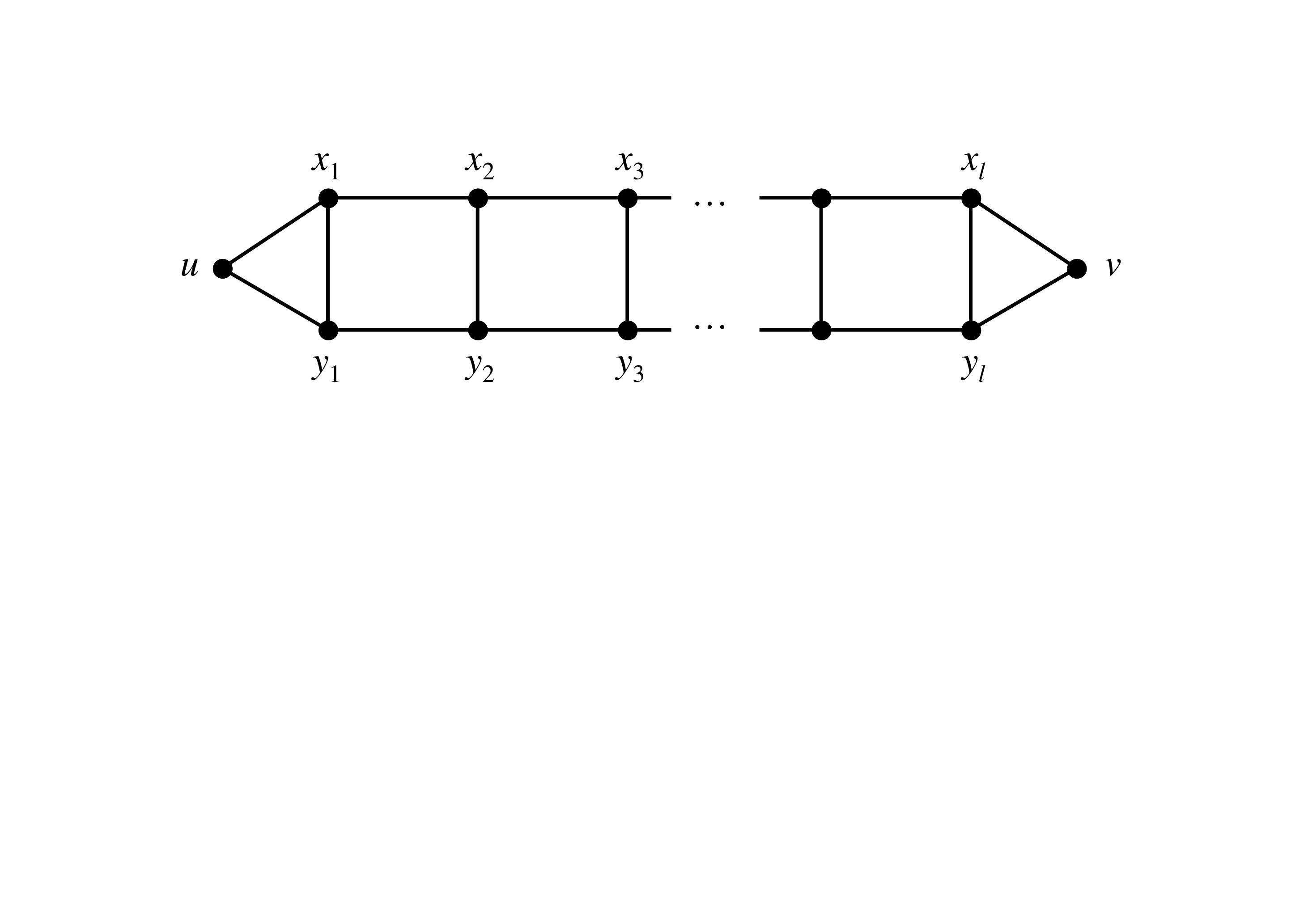}
\vspace{-1.0cm}
\caption{\small The outerplanar graph $G(l)$.} 
\label{fig:gl}
\end{figure} 

%\vspace{-10cm}

The main result of this paper is as follows.

\begin{thm}
\label{thm1}
For any outerplanar graph $G$ with maximum degree $\Delta=3$, we have $\lambda(G)\leq 6$. Moreover, for any integer $l \ge 4$ which is not a multiple of $3$, the outerplanar graph $G(l)$ as shown in Figure \ref{fig:gl} satisfies $\lambda(G(l)) = 6$.
\end{thm}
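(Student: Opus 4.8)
The plan is to prove the upper bound $\lambda(G)\le 6$ by induction on $|V(G)|$, in fact proving the slightly stronger statement that $\lambda(G)\le 6$ for every outerplanar graph $G$ with $\Delta(G)\le 3$, and to prove the lower bound $\lambda(G(l))\ge 6$ by a forcing argument that exploits the two triangular ends of $G(l)$ together with an obstruction modulo $3$. For the inductive step of the upper bound I would isolate a reducible configuration. If $G$ has a vertex $w$ of degree at most $1$, I delete it, label $G-w$ by induction, and extend: the labels forbidden for $w$ are the at most three labels within distance one of its neighbour's label together with the at most two labels of its distance-two vertices, so at most $5<7$ labels are excluded and an admissible label remains. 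Otherwise $G$ has minimum degree at least $2$, and I would work inside a leaf block of the block--cut tree. Such a block is $2$-connected and outerplanar, so its outer boundary is a Hamiltonian cycle; since every vertex already carries two boundary edges and has degree at most $3$, each vertex is incident with at most one chord, whence the chords form a non-crossing matching and the weak dual is a tree.

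A leaf of this tree is an inner face bounded by a single chord $v_jv_k$ and a path $w_0=v_j,w_1,\dots,w_t,w_{t+1}=v_k$ of the outer cycle whose internal vertices $w_1,\dots,w_t$ all have degree $2$. The reduction deletes this ear (the vertices $w_1,\dots,w_t$), leaving a smaller outerplanar graph of maximum degree at most $3$ in which $v_jv_k$ is now a boundary edge; by induction this smaller graph has a labelling of span at most $6$, and I would then relabel the ear so as to reconnect the prescribed labels $f(v_j)$ and $f(v_k)$. The technical heart of the upper bound is this reconnection. Because the internal vertices of the ear have degree $2$, relabelling the ear amounts to choosing an $L(2,1)$-labelling of the path $v_j,w_1,\dots,w_t,v_k$ with the end labels prescribed, subject only to a bounded number of extra ``$\ne$'' constraints near the two ends coming from the distance-two neighbours of $w_1$ and $w_t$ lying outside the ear. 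Using all seven labels $\{0,1,\dots,6\}$ there is enough slack to carry this out whenever $t$ is at least some small constant; the finitely many shortest ears are treated by direct case analysis on $f(v_j)$ and $f(v_k)$. I expect this path-reconnection lemma, together with the verification that seven labels always suffice where six would not, to be the main obstacle; it is precisely the extra label beyond $\Delta+2$ that removes the obstruction modulo $3$ described next.

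For the lower bound, suppose $f$ were an $L(2,1)$-labelling of $G(l)$ using only the labels $\{0,1,\dots,5\}$. Since $\{u,x_1,y_1\}$ and $\{v,x_l,y_l\}$ induce triangles, each of these triples must be pairwise at distance at least $2$, hence equal to one of $\{0,2,4\},\{0,2,5\},\{0,3,5\},\{1,3,5\}$; in particular each end column pair $\{f(x_1),f(y_1)\}$ and $\{f(x_l),f(y_l)\}$ must be extendable to such a triple. For every $i$ the column pair satisfies $|f(x_i)-f(y_i)|\ge 2$; consecutive columns satisfy $|f(x_i)-f(x_{i+1})|\ge 2$, $|f(y_i)-f(y_{i+1})|\ge 2$, $f(x_i)\ne f(y_{i+1})$ and $f(y_i)\ne f(x_{i+1})$; and each of the two rows is itself an $L(2,1)$-labelling of a path, giving $f(x_{i-1})\ne f(x_{i+1})$ and $f(y_{i-1})\ne f(y_{i+1})$. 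I would use these constraints to show that the sequence of columns is forced, up to the symmetries $t\mapsto 5-t$, swapping the two rows, and reversing the ladder, into a single period-three pattern whose three column-types are $\{0,3\}$, $\{2,5\}$ and $\{1,4\}$, exactly one of which (namely $\{1,4\}$, the type fixed by $t\mapsto 5-t$) cannot be extended to a triangle.

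The labels forced at $u$ and $v$ are then determined by the two end columns, and I would show that the distance-two constraints $f(u)\notin\{f(x_2),f(y_2)\}$ and $f(v)\notin\{f(x_{l-1}),f(y_{l-1})\}$ must be violated at one of the two ends unless $l\equiv 0\pmod 3$. Since $l$ is assumed not to be a multiple of $3$, this is a contradiction, so $\lambda(G(l))\ge 6$; combined with the upper bound this gives $\lambda(G(l))=6$. Establishing the period-three forcing rigorously---that is, ruling out every admissible column sequence other than those winding around this single three-cycle of column-types---is the crux of this direction, and is where I would expect the bulk of the case analysis to lie.
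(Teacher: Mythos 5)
Your upper-bound plan has a genuine gap at the step you yourself identify as the ``technical heart''. The induction deletes the internal vertices $w_1,\dots,w_t$ of a leaf ear, invokes the hypothesis to obtain \emph{some} 6-$L(2,1)$-labeling of the smaller graph, and then tries to relabel the ear with the labels of $v_j$, $v_k$ and of their distance-two neighbours prescribed. This reconnection is not always possible, and the failures occur exactly at the short ears you propose to dispatch by direct case analysis. Take $t=2$ and suppose the inherited labeling of $v_j'v_jv_kv_k'$ (where $v_j',v_k'$ are the outer neighbours of $v_j,v_k$ outside the ear) is $(4,1,3,0)$, which is a perfectly valid 6-$L(2,1)$-labeling of the reduced graph near the chord: then $f(w_1)$ must lie in $\{3,4,5,6\}\setminus\{3,4\}=\{5,6\}$ and $f(w_2)$ in $\{0,1,5,6\}\setminus\{0,1\}=\{5,6\}$, while $|f(w_1)-f(w_2)|\ge 2$ is required, which is impossible. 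Likewise $t=1$ fails when the inherited labels are $(0,2,4,6)$, since $w_1$ would need a label in $\{0,6\}$ but both are excluded at distance two. So the inductive statement ``the reduced graph admits a 6-labeling'' is too weak: you need the reduced graph to admit a 6-labeling that is \emph{extendable} across every ear to be re-attached. Supplying that strengthened invariant --- the notions of path-extendable and cycle-extendable labelings, the existence of such labelings on an initial face, and the proof that extendability propagates face by face across each shared edge --- is precisely the content of the paper's Sections 2, 4 and 5, and it is absent from your plan.

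Your lower-bound strategy is in spirit the paper's (the paper organizes the same forcing by classifying the components of $f^{-1}(\{0,2,4\})$ and $f^{-1}(\{1,3,5\})$ rather than by columns), but the forcing claim as stated is too strong: the column sequence is only \emph{eventually} period-three. For example $f(u)=0$, $(f(x_1),f(y_1))=(2,4)$, $(f(x_2),f(y_2))=(5,1)$, $(f(x_3),f(y_3))=(0,3)$, $(f(x_4),f(y_4))=(2,5),\dots$ is admissible on an arbitrarily long initial segment, and neither $\{2,4\}$ nor $\{1,5\}$ is one of your three column types even after applying $t\mapsto 5-t$. The transient columns at both ends must be classified before the mod-$3$ count at the two triangles closes; this is repairable (it is what the paper's classification of end components accomplishes), but it is additional work your sketch does not account for.
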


It is clear that, for any graph $G$, $\l_{1,1}(G)+1$ is equal to the chromatic number $\chi(G^2)$ of the square of $G$. Wegner \cite{W} conjectured that, for any planar graph $G$, $\chi(G^2)$ is bounded from above by 7 if $\De=3$, by $\De + 5$ if $4 \le \De \le 7$, and by $\lfloor 3\De/2 \rfloor + 1$ if $\De \ge 8$. The aforementioned result of Molloy and Salavatipour \cite{Molloy}, which can be restated as $\chi(G^2) \le \lceil 5\De/3 \rceil + 78$, is the best known result on this conjecture for general $\De$. Thomassen \cite{T} proved that Wegner's conjecture is true for planar graphs with $\De = 3$. Since $\chi(G^2) - 1 = \l_{1,1}(G) \le \l(G)$, Theorem \ref{thm1} can be viewed as a refinement of Thomassen's result for outerplanar graphs (but not general planar graphs) with $\De=3$.

In order to prove Theorem \ref{thm1}, we will introduce two extension techniques in Sections \ref{sec:ext1} and \ref{sec:ext2}, respectively. (See Theorems \ref{le43} and \ref{le44}.) Developed from graph coloring theory, they will play a key role in our proof (in Section \ref{sec:ubound}) of the upper bound $\lambda(G)\leq 6$. In Section \ref{sec:lower} we will prove $\lambda(G(l)) \ge 6$, which together with the upper bound implies the second statement in Theorem \ref{thm1}.

We follow \cite{BM} for terminology and notation on graphs. All graphs considered are simple and undirected. The neighborhood of $v$ in a graph $G$ is denoted by $N(v)$, and the set of vertices at distance two from $v$ in $G$ is denoted by $N_2(v)$.
A graph is {\it outerplanar} if it can be embedded in the plane in
such a way that all the vertices lie on the boundary of the same face called the {\it outer
face}. When an outerplanar graph $G$ is drawn in this way in a plane, we call it an
{\it outer plane graph}. The boundary of a face $F$ of an outer plane graph, denoted by $\partial F$,  can be regarded as the subgraph induced by its vertices. If $F$ is circular with vertices $x_1, x_2, \ldots, x_k$ in order, then $\partial F=x_1x_2\ldots x_kx_1$ is a $k$-cycle and we call $F$ a {\it $k$-face}. (A {\it $k$-cycle} is a cycle of length $k$.) Two faces of an outer plane graph are {\it intersecting} if they share at least one common vertex.

If $H$ is a subgraph of $G$ and $f: V(G)\rightarrow [0, k] =\{1, 2, \ldots, k\}$ is a $k$-$L(2, 1)$-labeling of $G$, then we define $f|_{H}: V(H)\rightarrow [0, k]$ to be the restriction of $f$ to $V(H)$; that is, $f|_{H}(u)=f(u)$ for each $u \in V(H)$. Clearly,  $f|_{H}$ is a $k$-$L(2, 1)$-labeling of $H$. Conversely, if $f$ is a 6-$L(2,1)$-labeling of $H$ and $f_1$ a 6-$L(2,1)$-labeling of $G$ such that $f_1|_{H}$ is identical to $f$, then we say that  $f$  can be {\it extended} to $f_1$. In this case, we will simply use $f$ instead of $f_1$ to denote the extended labeling.

\section{Preliminaries}
\label{sec:upper}

A graph $G$ is said to be the {\it 2-sum} of its subgraphs $H_{1}$ and $H_{2}$, written $G = H_{1} \oplus_2 H_{2}$, if $V(G)=V(H_1)\cup V(H_2)$, $E(G) = E(H_{1})\cup E(H_{2})$, $|V(H_{1}) \cap V(H_{2})| = 2$ and $|E(H_{1})\cap E(H_{2})| = 1$.

\begin{lemma}
\label{l1}
Suppose $G$ is a 2-connected outer plane graph with $\Delta=3$, and
$F_1$ and $F_2$ are two 3-faces of $G$. Then $F_1$ and $F_2$ are intersecting if and only
if $G$ is isomorphic to $F_1\oplus_2 F_2$.
\end{lemma}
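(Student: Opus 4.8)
The plan is to treat the two implications separately, with essentially all of the work in the ``only if'' direction. Sufficiency is immediate: if $G \cong F_1 \oplus_2 F_2$ then, by the definition of the $2$-sum, $\partial F_1$ and $\partial F_2$ are two triangles sharing exactly one edge, hence exactly two vertices, so $F_1$ and $F_2$ are intersecting. The content of the lemma is therefore the forward direction, namely that a pair of intersecting $3$-faces forces $G$ to be exactly the diamond $K_4 - e$ obtained by gluing $\partial F_1$ and $\partial F_2$ along an edge.

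For necessity, first note that since $\Delta = 3 > 2$ the graph $G$ is not a triangle, so $|V(G)| \ge 4$ and the outer boundary of $G$ is a cycle of length at least four; in particular neither $F_1$ nor $F_2$ is the outer face, so both are bounded $3$-faces. The crucial first step will be to upgrade ``intersecting'' to ``sharing an edge''. To do this I would pick a common vertex $w$ and use that, in a $2$-connected outer plane graph, $w$ lies on exactly $\deg(w)$ distinct faces arranged cyclically so that faces consecutive at $w$ share an incident edge. A degree-two vertex lies on only two faces, both bounded by its two incident edges, and in a simple graph these two faces cannot both be triangles; hence $\deg(w) = 2$ is impossible and $\deg(w) = 3$. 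But then $w$ lies on exactly three faces, any two of which share one of the three edges at $w$, and since $F_1$ and $F_2$ are two of these three faces they share an edge.

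Denote this common edge by $bc$ and write $\partial F_1$ as the triangle $abc$ and $\partial F_2$ as the triangle $bcd$, where $a \ne d$ because $G$ is simple and $F_1 \ne F_2$. Then $b$ is adjacent to $a, c, d$ and $c$ is adjacent to $a, b, d$, so $\deg(b) = \deg(c) = 3$ and both vertices are saturated; moreover $a \not\sim d$, since otherwise $\{a, b, c, d\}$ would induce a $K_4$ and $G$ would not be outerplanar. Thus $G$ contains the diamond on $\{a, b, c, d\}$ with edges $ab, ac, bc, bd, cd$, and it remains only to show that there are no further vertices.

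For this last step I would invoke the standard structure of $2$-connected outerplanar graphs: the outer boundary is a Hamiltonian cycle $C$, each vertex contributes exactly two edges to $C$, and every other incident edge is a noncrossing chord. As $bc$ borders the two bounded faces $F_1$ and $F_2$, it does not bound the outer face and is therefore a chord of $C$. Since $\deg(b) = \deg(c) = 3$ and $bc$ is the unique chord at each of $b$ and $c$, the two $C$-edges at $b$ are $ba$ and $bd$ and the two $C$-edges at $c$ are $ca$ and $cd$; tracing $C$ then yields the $4$-cycle $b\,a\,c\,d\,b$, so $V(G) = \{a, b, c, d\}$ and $G$ is precisely $F_1 \oplus_2 F_2$. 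I expect the main obstacle to be the edge-sharing step of the second paragraph, as this is exactly where the hypothesis $\Delta = 3$ is indispensable: for larger maximum degree two $3$-faces can meet in a single vertex, and the clean conclusion fails. The small boundary checks---excluding $K_3$ and $K_4$ and disposing of the degree-two case---are easy to overlook but must be stated explicitly; once edge-sharing is in hand, the Hamiltonian-cycle structure makes the rest routine.
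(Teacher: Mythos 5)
Your proof is correct, and its overall skeleton matches the paper's: first upgrade ``intersecting'' to ``sharing an edge'', then show the two remaining vertices are distinct and non-adjacent, then conclude there are no further vertices. The one place where you take a genuinely different route is the edge-sharing step. The paper gets it in one line from the degree bound alone: writing $\partial F_1=u_1u_2u_3$, $\partial F_2=v_1v_2v_3$ with $u_1=v_1$, the vertex $u_1$ is adjacent to all of $u_2,u_3,v_2,v_3$, so $\Delta=3$ forces two of these to coincide, i.e.\ the triangles share a second vertex and hence an edge. You instead invoke the facial rotation at the common vertex $w$ of a $2$-connected plane graph ($w$ lies on exactly $\deg(w)$ faces, consecutive ones sharing an edge, so with three faces every pair shares an edge). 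Both are valid; the paper's count is shorter and uses $\Delta=3$ in the most economical way, while your rotation argument needs the planarity/2-connectivity machinery but also explains structurally why degree $3$ leaves no room for two triangles meeting only at a point, and your handling of the degree-$2$ subcase (really powered by $|V(G)|\ge 4$ rather than simplicity alone, as you note it implicitly) is a check the paper skips. In the endgame you are more explicit than the paper: where the paper asserts $d(u_3)=d(v_3)=2$ and says the conclusion ``follows from the 2-connectivity of $G$'', you actually trace the Hamiltonian outer cycle through $a,b,d,c$ to pin down $V(G)=\{a,b,c,d\}$, which is a welcome filling-in of a step the published proof leaves terse.
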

\begin{proof}
Assume $\partial F_1=u_1u_2u_3$ and $\partial F_2=v_1v_2v_3$. Suppose $F_1$ and $F_2$
are intersecting and let, say, $u_1=v_1$. Since $\Delta=3$, we have $u_2=v_2$ or $u_2=v_3$.
Without loss of generality we may assume $u_2=v_2$. Then $d(u_1)=d(u_2)=3$ and $d(u_3)=d(v_3)=2$. Since $G$ is an outer plane graph, $u_3v_3\notin E(G)$. It follows from the 2-connectivity of $G$ that $G = F_1\oplus_2 F_2$.
Obviously, if $G$ is isomorphic to $F_1\oplus_2 F_2$, then $F_1$ and $F_2$ are intersecting.
\end{proof}

\medskip

\begin{lemma}
\label{le42}
Suppose $G$ is an outerplanar graph with $\Delta=3$ and $v$ is a vertex of $G$ with
$d(v)=2$. Let $P=v_1v_2\ldots v_q$ be a path such that $V(G)\cap V(P)=\emptyset$. Let $G'$ denote the graph obtained from $G$ and $P$ by identifying $v$ with $v_1$. Then $G$ admits a 6-$L(2,1)$-labeling if and only if $G'$ admits a 6-$L(2,1)$-labeling.
\end{lemma}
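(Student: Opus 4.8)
The plan is to prove the two implications separately, one being immediate and the other requiring only a short greedy extension. The easy direction is ``if'': since $G$ is a subgraph of $G'$ (we have only attached the path $P$ at $v=v_1$), any 6-$L(2,1)$-labeling of $G'$ restricts, via the map $f|_{G}$ defined in the Introduction, to a 6-$L(2,1)$-labeling of $G$. Hence $G'$ labelable implies $G$ labelable, and the content of the lemma lies entirely in the converse.

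For the converse, suppose $f$ is a 6-$L(2,1)$-labeling of $G$, so that $f$ takes values in the seven-element set $\{0,1,\dots,6\}$. Write $N(v)=\{a,b\}$. Because $V(G)\cap V(P)=\emptyset$ and $v=v_1$, in $G'$ each path vertex $v_i$ ($2\le i\le q$) has neighbors only among $v_{i-1},v_{i+1}$, while $v_1=v$ retains its two $G$-neighbors $a,b$ together with the single new neighbor $v_2$. I would keep the labels on $G$ fixed and extend $f$ to $v_2,v_3,\dots,v_q$ one vertex at a time, checking only the newly created adjacency and distance-two constraints.

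For $v_2$, the vertices within distance two in $G'$ that are already labeled are exactly $v_1$ (adjacent) and $a,b$ (at distance two through $v_1$), so a valid label for $v_2$ must avoid the at most three values $\{f(v_1)-1,f(v_1),f(v_1)+1\}$ and the two values $f(a),f(b)$, i.e.\ at most five forbidden labels, leaving at least two choices out of seven. For each subsequent $v_i$ with $i\ge 3$, the only already-labeled vertices within distance two are $v_{i-1}$ (adjacent) and $v_{i-2}$ (at distance two), so $v_i$ must avoid the at most three values $\{f(v_{i-1})-1,f(v_{i-1}),f(v_{i-1})+1\}$ and the single value $f(v_{i-2})$, at most four forbidden labels, leaving at least three choices. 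Thus the greedy assignment from $v_2$ up to $v_q$ never gets stuck and yields a 6-$L(2,1)$-labeling of $G'$ extending $f$.

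The only point needing care is the bookkeeping of which vertices lie within distance two of each $v_i$: since every path vertex has degree at most two and $P$ meets $G$ solely at $v$, the only new distance-two pairs involving $G$ are $\{v_2,a\}$ and $\{v_2,b\}$ (plus $\{v_3,v_1\}$ on the path side), so no constraint beyond those listed above arises. Consequently the seven available labels comfortably absorb the at most five (respectively four) forbidden values at each step, and I expect no genuine obstacle beyond this straightforward verification.
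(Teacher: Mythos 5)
Your proposal is correct and follows essentially the same argument as the paper: the ``if'' direction by restriction, and the converse by greedily labeling $v_2,\ldots,v_q$ in order, with $v_2$ avoiding the at most five labels $\{f(a),f(b),f(v)-1,f(v),f(v)+1\}$ and each later $v_j$ avoiding the at most four labels $\{f(v_{j-2}),f(v_{j-1})-1,f(v_{j-1}),f(v_{j-1})+1\}$ out of the seven available. Your counting of forbidden labels makes explicit the verification the paper leaves to the reader; no further comment is needed.
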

\begin{proof} If $g$ is a 6-$L(2, 1)$-labeling of $G'$, then $g|_{G}$ is a 6-$L(2,1)$-labeling of $G$.
Conversely, let $f$ be a 6-$L(2,1)$-labeling of $G$. Let $u_1$ and $u_2$ be two neighbors of $v$ in $G$. Denote $a_1=f(u_1)$, $a_2=f(u_2)$ and $b=f(v)$. We extend $f$ to the vertices of
$P$ as follows. The vertex $v_2$ is assigned $f(v_2)\in [0, 6] \setminus \{a_1, a_2, b, b-1, b+1\}$, and for $3\leq j\leq q$, $v_j$ is assigned $f(v_j)\in [0, 6] \setminus \{f(v_{j-2}), f(v_{j-1})-1, f(v_{j-1}), f(v_{j-1})+1\}$.
One can verify that this extension is possible and it defines a 6-$L(2, 1)$-labeling of $G'$.
\end{proof}

\medskip

In Theorems~\ref{le43} and \ref{le44}, we will develop two extension techniques under which a given 6-$L(2, 1)$-labeling of a subgraph of $G$ can be extended to a 6-$L(2, 1)$-labeling of $G$. To this end we define three classes of extendable 6-$L(2, 1)$-labelings as follows. (See Figure \ref{fig:hp} for an illustration.)

\begin{figure}[ht]
\centering
\includegraphics*[height=9cm]{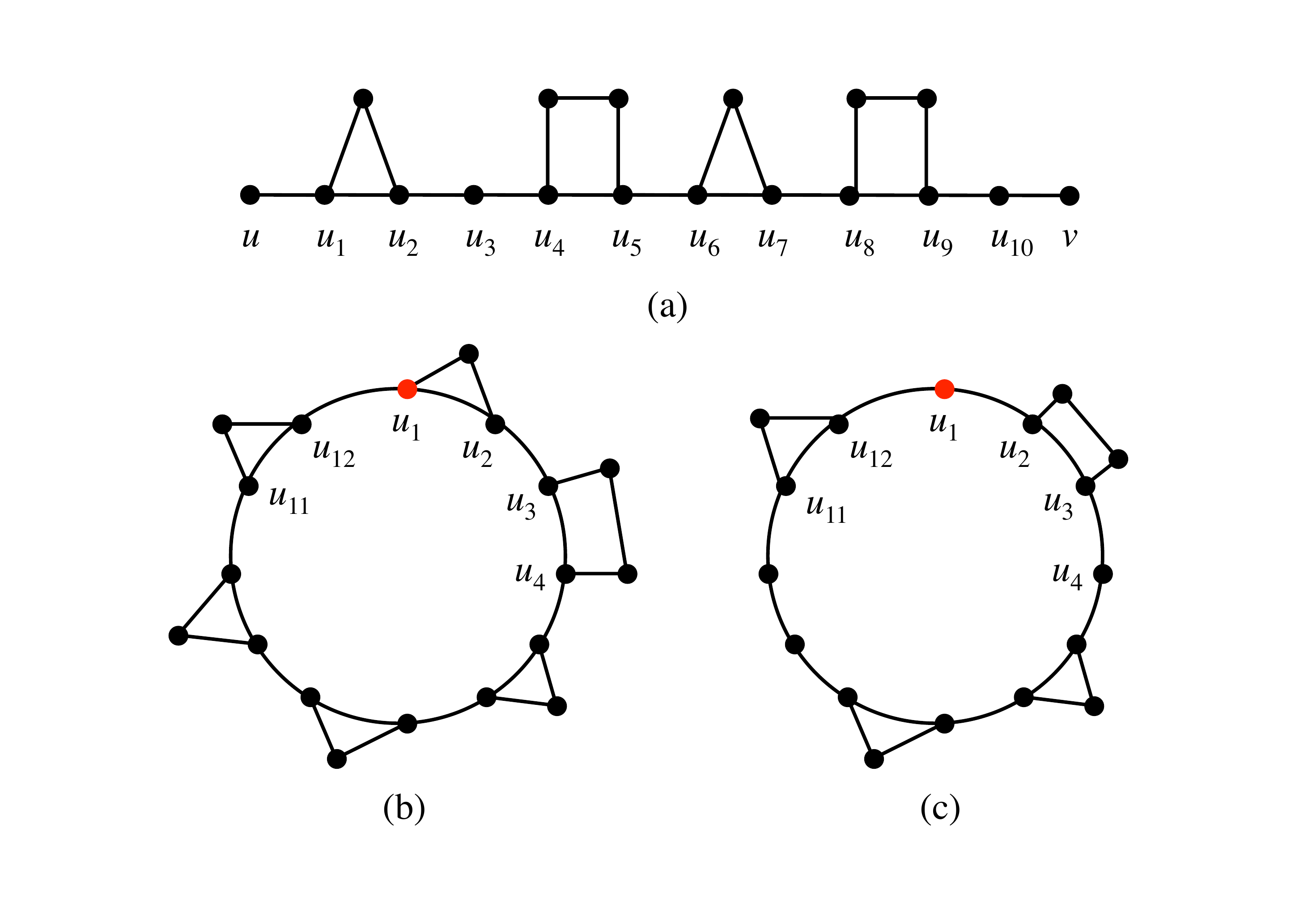}
\vspace{-1.0cm}
\caption{\small An illustration of Definitions \ref{defn:HP} and \ref{defn:KC}: (a) A graph in $\mathscr{H}(P)$ with $l = 10$, $t=4$, $i_1 = 1, i_2 = 4, i_3 = 6$ and $i_4 = 8$; (b) a graph in $\mathscr{K}(C)$ with $l = 12$ and $t=6$, where $C$ has starting vertex $u_1$; (c) a graph in $\mathscr{F}(C)$ with $l = 12$ and $t=5$, where $C$ has starting vertex $u_1$. Note that the graph in (b) is not a member of $\mathscr{F}(C)$.} 
\label{fig:hp}
\end{figure} 

\begin{defn}
\label{defn:HP}
Given a path $P=uu_1u_2\ldots u_lv$ with $l\geq 2$, we define a family of outer plane graphs $\mathscr{H}(P)$ as follows: $G \in \mathscr{H}(P)$ if and only if there exist $i_1, i_2, \ldots, i_{t}$ with $1 \le i_1 <i_2<\ldots < i_{t} < l$ and $i_{j+1} \ge i_{j} + 2$ for $j=1,\ldots,t-1$, such that $G$ can be obtained from $P$ by attaching $t$ paths of length 2 or 3 to $P$ and identifying the two end vertices of the $j$th path to $u_{i_{j}}$ and $u_{i_j+1}$ respectively, for $j=1,\ldots,t$.

A 6-$L(2, 1)$-labeling $f$ of $P$ is called a {\it path-extendable 6-$L(2,1)$-labeling} if $f$ can be extended to a 6-$L(2, 1)$-labeling of every $H\in{\mathscr{H}}(P)$.
\end{defn}

\begin{defn}
\label{defn:KC}
Given a cycle $C=u_1u_2\ldots u_lu_1$ with starting vertex $u_1$ (where $l\geq 3$), we define two families of outer plane graphs, denoted by $\mathscr{K}(C)$ and ${\mathscr F}(C)$, as follows: $G \in \mathscr{K}(C)$ if and only if there exist $i_1, i_2, \ldots, i_{t}$ with $1\leq i_1 <i_2<\ldots < i_{t}\leq l$ and $i_{j+1} \ge i_{j} + 2$ for $j=1,\ldots,t-1$, such that $G$ can be obtained from $C$ by attaching $t$ paths of length 2 or 3 to $C$ and identifying the two end vertices of the $j$th path to $u_{i_{j}}$ and $u_{i_j+1 }$ respectively, for $j=1,\ldots,t$, with the subscripts of $u$'s modulo $l$.

The family ${\mathscr F}(C)$ ($\subseteq \mathscr{K}(C)$) is defined in exactly the same way as $\mathscr{K}(C)$ except that in addition we require $i_1 \geq 2$ and $i_t < l$.  

A 6-$L(2, 1)$-labeling $f$ of $C$ is defined to be a {\it cycle-extendable 6-$L(2, 1)$-labeling of type 1} (respectively, type 2) if $f$ can be extended to a 6-$L(2, 1)$-labeling of every $H\in {\mathscr K}(C)$ (respectively, $H\in {\mathscr F}(C)$).
\end{defn}

We emphasize that $\mathscr{K}(C)$ and ${\mathscr F}(C)$ depend on the starting vertex $u_1$ of $C$, and that in our subsequent discussion $u_1$ should be clear from the context. 

Path-extendable labelings will be used in extension technique 1 while cycle-extendable labelings in extension technique 2.
Not every 6-$L(2,1)$-labeling of $P$ is extendable. For example, if a 6-$L(2, 1)$-labeling $f$ of $P$ satisfies $(f(u_i), f(u_{i+1}), f(u_{i+2}), f(u_{i+3}))=(4, 1, 3, 0)$ for some $1 \le i \le l-3$, then $f$ is not extendable. In fact, if $H$ is obtained from $P$ by identifying the end vertices of a path $Q$ of length 3 to $u_{i+1}$ and $u_{i+2}$ respectively, then $H \in{\mathscr{H}}(P)$ but $f$ cannot be extended to a 6-$L(2, 1)$-labeling of $H$, because we cannot assign labels from $[0, 6]$ to the two middle vertices of $Q$ without violating the $L(2, 1)$-condition. Similarly, if $f$ is a 6-$L(2,1)$-labeling of $P$ such that $(f(x_i), f(x_{i+1}), f(x_{i+2}), f(x_{i+3})) = (0,2, 4,6), (4, 1, 6, 3), (5, 1, 4, 6)$ or $(6, 1, 4, 2)$ for some $1 \le i \le l-3$, then $f$ is not extendable.

For a 6-$L(2,1)$-labeling $f$ of $G$, if $f(u)\in \{1, 3, 5\}$ for a vertex $u$, then there are two possible labels from $\{0, 2, 4, 6\}$ that can be assigned to a neighbor $v$ of $u$ such that $|f(u)-f(v)|\geq 2$. These two labels are called the {\it available neighbor labels}
in $\{0, 2, 4, 6\}$ for $v$ with respective to $f(u)$ (or available neighbor labels
in $\{0, 2, 4, 6\}$ for $f(u)$). Similarly, there are two available neighbor labels in $\{1, 3, 5\}$
for a label $f(u)\in \{0, 6\}$ and one available neighbor label in $\{1, 3, 5\}$ for a label $f(u)\in \{2, 4\}$. The next lemma is straightforward.

\begin{lemma}
\label{le01}
Given a 6-$L(2,1)$-labeling $f$ of an outer plane graph $G$ with $\Delta=3$, if $L_x$ is the set of available neighbor labels in $\{0, 2, 4, 6\}$ for $x\in \{1, 3, 5\}$ and $M_y$ the set of available neighbor labels in $\{1, 3, 5\}$ for $y\in \{0, 2, 4, 6\}$, then the following hold:
\begin{itemize}
\item[\rm (i)] for any $a, b\in \{1, 3, 5\}$ with $a\not=b$, $|L_a\cup L_b|\geq 3$ and $|L_a\setminus L_b|\geq 1$;
\item[\rm (ii)] if $(a, b) = (1, 3)$ or $(3, 5)$, then $|L_a\cap L_b|=1$;
\item[\rm (iii)] for each $a\in \{0, 2, 4, 6\}$, $|M_a|\geq 1$;
\item[\rm (iv)] for any $a, b\in \{0, 2, 4, 6\}$ with $a\not=b$, $|M_a\cup M_b|\geq 2$; moreover, $|M_a\cup M_b|\geq 3$ when $(a,b)\in \{(0, 6), (2, 6)\}$.
\end{itemize}
\end{lemma}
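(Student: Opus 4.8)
The plan is entirely computational. The key observation is that the sets $L_x$ and $M_y$ depend only on the single label $x$ (resp. $y$) together with the separation rule $|f(u)-f(v)|\ge 2$ for adjacent vertices; they do not depend on $G$, on $f$, or on anything else in the configuration. Hence the first step is simply to write them out explicitly. For $x\in\{1,3,5\}$, an even label $c\in\{0,2,4,6\}$ is available for a neighbour precisely when $|x-c|\ge 2$; scanning the four even values gives
\[
L_1 = \{4,6\},\quad L_3 = \{0,6\},\quad L_5 = \{0,2\}.
\]
Symmetrically, for $y\in\{0,2,4,6\}$ an odd label $c\in\{1,3,5\}$ is available when $|y-c|\ge 2$, which yields
\[
M_0 = \{3,5\},\quad M_2 = \{5\},\quad M_4 = \{1\},\quad M_6 = \{1,3\}.
\]
These six sets already recover the counts asserted in the paragraph preceding the lemma (two available labels for each odd value and for $0,6$, a single one for $2,4$).

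With the sets in hand, each of (i)--(iv) reduces to a finite inspection over a handful of pairs. For (i) and (ii) I would run through the three unordered pairs from $\{1,3,5\}$, recording for instance $L_1\cup L_3=\{0,4,6\}$ and $L_1\cap L_3=\{6\}$, $L_3\cup L_5=\{0,2,6\}$ and $L_3\cap L_5=\{0\}$, and $L_1\cup L_5=\{0,2,4,6\}$; from these one reads off $|L_a\cup L_b|\ge 3$, $|L_a\setminus L_b|\ge 1$ for every ordered pair, and $|L_a\cap L_b|=1$ for the two \emph{consecutive} pairs $(1,3)$ and $(3,5)$. Part (iii) is immediate since each $M_a$ displayed above is nonempty. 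For (iv) I would tabulate $M_a\cup M_b$ over the six unordered pairs from $\{0,2,4,6\}$ and read off the cardinalities; the only cases in which the claimed sharper bound $3$ is asserted are $(0,6)$ and $(2,6)$, and both give $M_a\cup M_b=\{1,3,5\}$, of size $3$, while every other pair still meets the weaker bound $2$.

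Since every step is a direct evaluation over sets of size at most two, there is no genuine obstacle here — this is exactly why the authors call the lemma straightforward. The only point demanding care is applying the separation inequality correctly, namely that adjacent labels must differ by \emph{at least} $2$ (so that, e.g., $1$ and $2$ are incompatible and $1$ and $0$ are incompatible), as this is precisely what pins down the six sets above; once they are written correctly, the four assertions follow by inspection.
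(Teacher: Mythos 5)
Your proposal is correct: the explicit sets $L_1=\{4,6\}$, $L_3=\{0,6\}$, $L_5=\{0,2\}$, $M_0=\{3,5\}$, $M_2=\{5\}$, $M_4=\{1\}$, $M_6=\{1,3\}$ are right, and all four assertions follow by the inspection you describe. The paper omits the proof entirely (calling the lemma straightforward), and your direct enumeration is exactly the intended verification.
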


Given a path $u_1u_2\ldots u_{3k}$, we say that we label $u_1, u_2, \ldots, u_{3k}$ using {\em pattern $a\,b\,c$}, or these vertices are labeled using pattern $a\,b\,c$, if, for $0 \le i \le k-1$, $u_{3i+1}$, $u_{3i+2}$ and $u_{3i+3}$ are assigned $a$, $b$ and $c$, respectively.

 \begin{lemma}
 \label{le00}
 \begin{itemize}
\item[\rm (i)] Let $P=uu_1u_2\ldots u_{3k}v$ be a path and $f$ a 6-$L(2,1)$-labeling of $P$. If $f$ labels $u_1, u_2, \ldots$, $u_{3k}$ using pattern $a\,b\,c$, where $\{a, b, c\} \subset \{0, 2, 4, 6\}$, then $f$ is a path-extendable 6-$L(2,1)$-labeling.
\item[\rm (ii)] Let $C=u_1u_2\ldots u_{3k}u_1$ be a cycle and $f$ a 6-$L(2,1)$-labeling of $C$. If $f$ labels $u_1, u_2, \ldots, u_{3k}$ using pattern $a\,b\,c$, where $\{a, b, c\} \subset \{0, 2, 4, 6\}$, then $f$ is a cycle-extendable 6-$L(2,1)$-labeling of type 1.
\item[\rm (iii)] Let $C=u_1u_2\ldots u_{3k}u_{3k+1}u_1$ be a cycle and $f$ a 6-$L(2,1)$-labeling of $C$. If $f$ labels $u_3, u_4, \ldots, u_{3k}$, $u_{3k+1}, u_1$ using pattern $a\,b\,c$, where $\{a, b, c\} \subset \{0, 2, 4, 6\}$, then $f$ is a cycle-extendable 6-$L(2,1)$-labeling of type 2.
\end{itemize}
 \end{lemma}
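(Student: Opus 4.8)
The three parts are variations of a single extension-by-pieces argument, so the plan is to treat them uniformly. To show a pattern labeling $f$ is (path- or cycle-) extendable, I fix an arbitrary member $H$ of the relevant family $\mathscr{H}(P)$, $\mathscr{K}(C)$ or $\mathscr{F}(C)$ and extend $f$ to the internal vertices of the attached paths one attached path at a time. The first point to establish is that these extensions do not interfere: since consecutive attachment indices satisfy $i_{j+1}\ge i_j+2$, the internal vertices of distinct attached paths are pairwise at distance at least $3$, and the whole distance-$2$ neighbourhood of an internal vertex is contained in the attachment vertices $u_{i_j},u_{i_j+1}$ together with their path-neighbours. Hence each attached path can be labelled independently, subject only to the labels already present on $u_{i_j-1},u_{i_j},u_{i_j+1},u_{i_j+2}$. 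The one structural fact I would record about the pattern is that, for an \emph{interior} attachment, the two path-vertices at distance $2$ from an internal vertex, namely $u_{i_j-1}$ and $u_{i_j+2}$, always carry the \emph{same} even label, the third symbol of the pattern not appearing on $u_{i_j},u_{i_j+1}$; call it $\delta$, and let $d$ be the unique even label in $\{0,2,4,6\}\setminus\{a,b,c\}$.

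For an attached path of length $3$, with internal vertices $w_1$ (adjacent to $u_{i_j}$) and $w_2$ (adjacent to $u_{i_j+1}$), I would assign two \emph{distinct odd} labels, choosing $f(w_1)\in M_{f(u_{i_j})}$ and $f(w_2)\in M_{f(u_{i_j+1})}$. Lemma~\ref{le01}(iv) guarantees $|M_{f(u_{i_j})}\cup M_{f(u_{i_j+1})}|\ge 2$, so two distinct such odd labels exist; distinct odd labels automatically differ by at least $2$, giving $|f(w_1)-f(w_2)|\ge2$, and every odd label is automatically distinct from the even labels on the surrounding path-vertices, so the distance-$2$ constraints hold for free. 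Crucially this choice is oblivious to position: it works for interior and boundary attachments alike, since odd labels avoid the even labels on the path-endpoints $u,v$.

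For an attached path of length $2$, the single internal vertex $w$ needs a label at least $2$ from both $f(u_{i_j})$ and $f(u_{i_j+1})$ and distinct from the labels on the two path-vertices at distance $2$. For an interior attachment those two labels are both $\delta$, so the leftover even label $d$ (which is distinct from $\delta$ and at least $2$ from any of $a,b,c$) satisfies every constraint and can be assigned to $w$. In parts (ii) and (iii) \emph{every} permitted attachment is interior: on the cycle of part (ii) there are no path-endpoints and the period-$3$ pattern closes up consistently, while in part (iii) the restrictions $i_1\ge2$ and $i_t<l$ in the definition of $\mathscr{F}(C)$ deliberately forbid the two attachments that would straddle the irregular vertex $u_2$ and the wrap-around. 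Hence for (ii) and (iii) the length-$2$ and length-$3$ cases above already complete the proof.

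The step I expect to be the real obstacle is the length-$2$ attachment at a \emph{boundary} pair in part (i), that is at $u_1u_2$ or at $u_{l-1}u_l$. There exactly one of the two distance-$2$ path-vertices is the path-endpoint $u$ (respectively $v$), whose label is not controlled by the pattern, so the clean ``use $d$'' recipe can break down. The difficulty concentrates on the ``dangerous'' even pairs $(2,4),(0,4),(2,6)$, for which no odd label and only two even labels lie at distance at least $2$ from both endpoints of the pair, leaving little slack once $\delta$ and $f(u)$ (respectively $f(v)$) are excluded. To handle this I would run a short finite case analysis over the few possibilities for the boundary pair together with the endpoint label, using the fact that $f$ is a genuine $6$-$L(2,1)$-labeling of all of $P$ (so $f(u)$ and $f(v)$ are themselves constrained relative to the pattern) to verify that a valid label for $w$ always survives. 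This endpoint bookkeeping, rather than any of the interior reasoning, is where the argument needs the most care.
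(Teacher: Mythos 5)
Your interior analysis coincides with the paper's proof: the paper likewise assigns the leftover even label $d\in\{0,2,4,6\}\setminus\{a,b,c\}$ to the internal vertex of each length-2 attachment and a pair of labels from $\{1,3,5\}$ (via Lemma~\ref{le01}) to the internal vertices of each length-3 attachment, with the same non-interference observation. The genuine gap is exactly the step you flag as the crux and then defer. You assert that a finite case analysis at the boundary pairs would ``verify that a valid label for $w$ always survives,'' but it does not. Take the pattern $a\,b\,c=0\,4\,2$ and $f(u)=6$: for $k=1$ the labeling $f=(6,0,4,2,6)$ of $P=uu_1u_2u_3v$ is a legitimate $6$-$L(2,1)$-labeling satisfying the hypotheses, yet for a length-$2$ path attached at $(u_1,u_2)$ the internal vertex $w$ must satisfy $|f(w)-0|\ge 2$, $|f(w)-4|\ge 2$, $f(w)\ne 2$ and $f(w)\ne 6$, and no label in $[0,6]$ survives. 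So the boundary difficulty you identified is real and cannot be closed by case analysis alone; it requires either an additional hypothesis on $f(u),f(v)$ or the convention (implicit in how the paper actually uses Lemma~\ref{le00}, namely through Lemma~\ref{lem01} and on cycles) that the end segments are handled separately. The paper's own one-line proof assigns $d$ without ever checking it against $f(u)$ or $f(v)$, so what you have located is a weakness of the published argument, not merely a delicate case.

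Two smaller defects of the same origin. Your length-3 recipe rests on ``odd labels avoid the even labels on the path-endpoints $u,v$,'' but the lemma places no parity restriction on $f(u)$ and $f(v)$; if $i_1=1$ and $f(u)$ is odd it can coincide with your chosen $f(w_1)\in M_{f(u_1)}$ (which lies at distance two from $u$), and when $|M_{f(u_1)}|=1$ this forces an even label on $w_1$ and a further case split you have not done. And in part (iii) the conditions $i_1\ge 2$, $i_t<l$ still permit an attachment at $(u_2,u_3)$, where $u_2$ is precisely the vertex whose label is not governed by the pattern; that attachment is a boundary case of the same kind, not an interior one as you claim, although there the constraints on $f(u_2)$ coming from the patterned neighbours happen to leave enough room.
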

\begin{proof}
 We prove (a) here. The proofs for (b) and (c) are similar. Let $H \in {\mathscr{H}}(P)$. If a vertex $v$ of $H$ is adjacent to two consecutive vertices of $P$, then $v$ can be assigned the unique label in $\{0, 2, 4, 6\} \setminus \{a, b, c\}$. If two adjacent vertices $u, v$ of $H$ are adjacent to $u_i, u_{i+1}$ respectively, then by Lemma~\ref{le01},  $u$ and $v$ can be assigned labels from $\{1, 3, 5\}$. Thus $f$ can be extended to a 6-$L(2,1)$-labeling of $H$. Since this is true for any $H \in {\mathscr{H}}(P)$, $f$ is extendable.
\end{proof}

\medskip

All 6-$L(2,1)$-labelings of a path $P$ (or a cycle) in this paper fall into two categories for some $a, b, c \in \{0, 2, 4, 6\}$: either all vertices of $P$ are labeled using pattern $a\,b\,c$, or all vertices of $P$ except at most three  vertices at each end of $P$ are labeled using pattern $a\,b\,c$. In the latter case, by Lemma~\ref{le00}, we have the following lemma.

\begin{lemma}
\label{lem01}
 \begin{itemize}
\item[\rm (i)]  Let $P=uv_1\ldots v_tu_1u_2\ldots u_{3k}w_1\ldots w_sv$ be a path and $f$ a 6-$L(2,1)$-labeling of $P$. Suppose $f|_{uv_1\ldots v_tu_1u_2}$ and $f|_{u_{3k-1}u_{3k}w_1\ldots u_sv}$ are  path-extendable 6-$L(2,1)$-labelings of $uv_1\ldots v_tu_1u_2$ and $u_{3k-1}u_{3k}w_1\ldots u_sv$, respectively, where $t\leq 3$ and $s\leq3$. If $f$ labels $u_1, u_2, \ldots, u_{3k}$ using pattern $a\,b\,c$, where $\{a, b, c\} \subset \{0, 2, 4, 6\}$, then $f$ is a path-extendable 6-$L(2,1)$-labeling.

\item[\rm (ii)] Let $C=v_1\ldots v_tu_1u_2\ldots u_{3k}v_1$ be a cycle and $f$ a 6-$L(2,1)$-labeling of $C$. Suppose $f|_{u_{3k-1}u_{3k}v_1\ldots v_tu_1u_2}$ is an extendable 6-$L(2,1)$-labeling of $u_{3k-1}u_{3k}v_1\ldots v_tu_1u_2$, where $t\leq 3$. If $f$ labels $u_1, u_2, \ldots, u_{3k}$ using pattern $a\,b\,c$, where $\{a, b, c\} \subset \{0, 2, 4, 6\}$, then $f$ is a cycle-extendable 6-$L(2,1)$-labeling of type 1.

\item[\rm (iii)] Let $C=v_1\ldots v_tu_3u_4\ldots u_{3k+1}u_1u_2$ be a cycle and $f$ a 6-$L(2,1)$-labeling of $C$. Suppose $f|_{u_{2}v_1\ldots v_tu_3u_4}$ is a path-extendable 6-$L(2,1)$-labeling of $u_{2}v_1\ldots v_tu_3u_4$, where $t\leq 3$. If $f$ labels $u_3, u_4, \ldots, u_{3k+1}, u_1$ using pattern $a\,b\,c$, where $\{a, b, c\} \subset \{0, 2, 4, 6\}$, then $f$ is a cycle-extendable 6-$L(2,1)$-labeling of type 2.
\end{itemize}
\end{lemma}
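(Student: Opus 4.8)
\textbf{Proof proposal for Lemma \ref{lem01}.}

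The plan is to reduce each of the three parts to Lemma \ref{le00} by analyzing how an arbitrary host graph $H$ in the relevant family ($\mathscr{H}(P)$, $\mathscr{K}(C)$, or $\mathscr{F}(C)$) decomposes into a central part, where the labeling follows the pure pattern $a\,b\,c$, and two (or one) end parts, where extendability is already guaranteed by hypothesis. The key observation is that the attached paths of length $2$ or $3$ are localized: each one is identified with a consecutive pair $u_{i_j}, u_{i_j+1}$ of $P$ (or $C$), and the condition $i_{j+1}\ge i_j+2$ ensures these attachment pairs are disjoint. Hence I would first partition the attached paths of $H$ into those whose attachment pair lies entirely within the central block $u_1u_2\ldots u_{3k}$, and those whose attachment pair meets one of the two end segments $uv_1\ldots v_tu_1u_2$ or $u_{3k-1}u_{3k}w_1\ldots w_sv$.

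For part (i), I would handle the central attachments exactly as in the proof of Lemma \ref{le00}(i): any attached path meeting only interior vertices $u_i, u_{i+1}$ with $2\le i$ and $i+1\le 3k-1$ sits over two consecutively patterned vertices, so its middle vertex (length-$2$ case) can take the unique label in $\{0,2,4,6\}\setminus\{a,b,c\}$, and its two middle vertices (length-$3$ case) can take labels from $\{1,3,5\}$ using Lemma \ref{le01}. For an attached path whose attachment pair overlaps an end segment, extendability is supplied directly by the hypothesis that $f|_{uv_1\ldots v_tu_1u_2}$ and $f|_{u_{3k-1}u_{3k}w_1\ldots w_sv}$ are path-extendable: since $H$ restricted to such an end segment together with its attached paths is a member of $\mathscr{H}$ of that shorter path, those attachments can be filled in. The only subtlety is the boundary overlap, namely an attached path straddling $u_2u_3$ or $u_{3k-1}u_{3k}$, where the two vertices carry one pattern label and one end-segment label; but by design the end segments were taken to include $u_1u_2$ (respectively $u_{3k-1}u_{3k}$), so every attachment pair lies wholly inside either the central block or an end segment, and no pair is split between the two regimes. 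This is the step I expect to be the main obstacle: verifying that the overlap of the end-segment hypotheses with the central pattern covers every possible attachment position, i.e. that there is no ``seam'' pair that is neither purely central nor controlled by an end hypothesis.

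For parts (ii) and (iii) the argument is the same with the cyclic structure folded in. In the cyclic case there is only one seam (since the cycle closes up), and the single end-segment hypothesis on $u_{3k-1}u_{3k}v_1\ldots v_tu_1u_2$ (respectively $u_2v_1\ldots v_tu_3u_4$) is precisely the arc spanning that seam; every attachment pair either lies in the patterned arc, handled by Lemma \ref{le00}(ii) or (iii), or lies in the controlled arc, handled by the hypothesis. In all three parts, once each attached path is filled independently and the $L(2,1)$-condition is checked locally—adjacent attached vertices only see each other and their two anchors, and vertices at distance two are confined to the same local neighborhood—the extension is valid, and since $H$ was arbitrary, $f$ is extendable in the required sense. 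Thus the lemma follows from Lemma \ref{le00} by this decomposition into patterned and end-controlled regions.
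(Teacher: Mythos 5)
The paper gives no real proof of this lemma (it is stated with only the remark that it follows from Lemma~\ref{le00}), so your decomposition into a patterned central block handled as in Lemma~\ref{le00} and two end segments handled by hypothesis is indeed the intended argument. However, the seam step that you single out as ``the main obstacle'' and then dismiss is exactly where the argument breaks, and your resolution of it is not correct as written. The pair $(u_1,u_2)$ is an admissible attachment position for the long path $P$ (both vertices are internal to $P$), but it is \emph{not} an admissible attachment position for the end segment $uv_1\ldots v_tu_1u_2$: in Definition~\ref{defn:HP} the attachment indices satisfy $i_j<l$, so no attached path ever touches the terminal vertex of the path, and $u_2$ is the terminal vertex of that segment. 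Hence the hypothesis that $f|_{uv_1\ldots v_tu_1u_2}$ is path-extendable says nothing about attachments at $(u_1,u_2)$. Nor does the pattern argument cover this pair: a vertex attached to $u_1$ and $u_2$ lies at distance two from $v_t$, whose label is not governed by the pattern $a\,b\,c$.

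This is a genuine gap, not just a presentational one. Take the pattern $a\,b\,c=2\,4\,6$ and $f(v_t)=0$; one checks that the segment $uv_1\ldots v_tu_1u_2$ can still carry a path-extendable labeling with $(f(v_t),f(u_1),f(u_2))=(0,2,4)$, yet the full path then contains the consecutive quadruple $(0,2,4,6)$, which Section~2 of the paper itself lists as non-extendable (a degree-two vertex attached to the middle pair $(2,4)$ has no admissible label, since only $0$ and $6$ are at distance at least two from both $2$ and $4$, and both are forbidden at distance two, while $M_2\cap M_4=\emptyset$). The same issue occurs at $(u_{3k-1},u_{3k})$ and at the corresponding boundary pairs in parts (ii) and (iii). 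So the lemma needs either an additional hypothesis controlling $f(v_t)$ and $f(w_1)$ at the seams (in the paper's applications these labels are always explicitly chosen and compatible), or a separate verification of the seam pairs; your proof cannot close this by the stated decomposition alone. To be fair, the imprecision is inherited from Lemma~\ref{le00} itself, whose proof also ignores the distance-two interaction with the terminal vertices $u$ and $v$.
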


%lower bound
\section{The lower bound}
\label{sec:lower}

In this section we prove the relatively easy part of Theorem \ref{thm1}, that is, $\lambda(G(l)) \ge 6$ if $l \ge 4$ is not a multiple of $3$. Throughout this section we assume $l\geq 3$ and the vertices $u, v$, $x_1, \ldots, x_l$, $y_1, \ldots, y_l$ of $G(l)$ are as shown in Figure \ref{fig:gl}.

Suppose that $G(l)$ admits a 5-$L(2, 1)$-labeling $f: V(G(l)) \rightarrow [0, 5]$.
Let $H_1$ and $H_2$ be the subgraphs of $G(l)$ induced by $f^{-1}(\{0, 2, 4\})$ and $f^{-1}(\{1, 3, 5\})$ respectively.  The roles of $H_1$ and $H_2$ are symmetric because assigning $5-f(w)$ to $w \in V(G(l))$ yields another 5-$L(2, 1)$-labeling of $G(l)$.

\begin{lemma}
\label{le001}
Every component of $H_1$ or $H_2$ has at least two vertices.
\end{lemma}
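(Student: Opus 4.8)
The plan is to prove Lemma~\ref{le001} by contradiction: suppose some component of $H_1$ (equivalently $H_2$, by the stated $5-f(w)$ symmetry) is a single isolated vertex $w$, and derive a contradiction from the structure of $G(l)$ together with the $L(2,1)$-constraints. First I would observe that the ``even labels'' $\{0,2,4\}$ are mutually at distance at least $2$ apart, so within $H_1$ adjacency in $G(l)$ is the only thing that matters; a vertex $w$ with $f(w)\in\{0,2,4\}$ is isolated in $H_1$ precisely when none of its $G(l)$-neighbors receives an even label. Since $\Delta=3$ and every internal vertex $x_i,y_i$ has degree exactly $3$, the key point is that if all three neighbors of $w$ carry odd labels from $\{1,3,5\}$, these three neighbors are pairwise at distance two (they share the common neighbor $w$), so they must receive three \emph{distinct} odd labels, forcing $\{f(\text{neighbors})\}=\{1,3,5\}$ and hence $f(w)\in\{0,2,4\}$ with all of $1,3,5$ appearing around it. This already pins down $f(w)$: a neighbor labeled $1$ and a neighbor labeled $3$ force $f(w)\notin\{0,2,4\}\cap(\{0,1,2\}\cup\{2,3,4\})$, i.e.\ the constraint $|f(w)-1|\ge2$ and $|f(w)-3|\ge2$ and $|f(w)-5|\ge2$ cannot all hold for any even value in $[0,5]$.

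That last observation is really the heart of the argument, so let me make it the central step. The plan is: assume $w$ is isolated in $H_1$ with $f(w)=c\in\{0,2,4\}$; then every neighbor of $w$ lies in $H_2$, i.e.\ carries an odd label. If $w$ has three neighbors, they occupy $\{1,3,5\}$, but then one neighbor is labeled $c\pm1$ (since $c\in\{0,2,4\}$ is adjacent on the integer line to an odd number), violating the distance-one requirement $|f(w)-f(\text{neighbor})|\ge2$; contradiction. Hence every isolated vertex of $H_1$ has degree at most $2$ in $G(l)$, which by the definition of $G(l)$ means $w\in\{u,v\}$ or $w\in\{x_1,x_l,y_1,y_l\}$ (the only degree-$2$ vertices). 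I would then handle these boundary cases directly: for $w=u$, its two neighbors $x_1,y_1$ would both be odd-labeled, but $x_1y_1\in E(G(l))$ forces $|f(x_1)-f(y_1)|\ge2$ while they are also both at distance two from each other's other neighbors, and one checks the local labels around $u,x_1,y_1,x_2,y_2$ cannot be completed, or more cleanly that $f(u)$ even with both neighbors odd again forces a neighbor at distance one on the integer line. The symmetric cases $w=v$ and $w\in\{x_1,x_l,y_1,y_l\}$ follow by the same counting.

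The main obstacle I anticipate is the boundary/degree-$2$ case, where the clean ``three neighbors fill $\{1,3,5\}$'' pigeonhole no longer applies, so I must rule out isolation using the triangle-like local structure instead. For a degree-$2$ vertex $w$ with neighbors $p,q$, isolation in $H_1$ means $f(p),f(q)\in\{1,3,5\}$; the useful extra fact is that $p$ and $q$ have a further common neighbor or a short cycle through them in $G(l)$ (e.g.\ $u,x_1,y_1$ form a triangle via $ux_1,uy_1,x_1y_1$), which constrains $f(p),f(q)$ enough that at least one of $p,q$ is forced to an even label, contradicting isolation — or alternatively one shows the even label $f(w)$ itself is impossible given $\{f(p),f(q)\}\subseteq\{1,3,5\}$, because any $c\in\{0,2,4\}$ satisfies $|c-1|<2$ or $|c-3|<2$ or $|c-5|<2$, so $w$ cannot avoid an odd neighbor at integer-distance one. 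I would verify this final numeric claim as a one-line check over the three even values rather than a case analysis, and use it uniformly to close both the degree-$3$ and degree-$2$ situations, thereby establishing that no component of $H_1$ (hence of $H_2$) can be a single vertex.
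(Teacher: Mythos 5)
Your degree-three case is exactly the paper's argument and is correct: the three neighbours of an isolated vertex $w$ of $H_1$ are pairwise at distance two, hence receive distinct odd labels and occupy all of $\{1,3,5\}$, and every $c\in\{0,2,4\}$ is at integer-distance one from some element of $\{1,3,5\}$, contradicting the adjacency condition.

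The gap is in the degree-two case, and specifically in the step you commit to at the end. With only two neighbours the odd labels do not exhaust $\{1,3,5\}$, so your ``one-line numeric check'' does not close the case: taking $w=u$ with $f(u)=0$ and $\{f(x_1),f(y_1)\}=\{3,5\}$ satisfies $|0-3|\ge 2$ and $|0-5|\ge 2$, and the triangle $ux_1y_1$ alone yields no contradiction --- it merely forces this configuration (the alternatives $f(u)=2$ and $f(u)=4$ die because the two odd neighbour labels would have to coincide). So neither of the two ``clean'' closings you propose (the uniform integer-distance-one check, or the triangle forcing an even label on $x_1$ or $y_1$) actually works. The paper must propagate one step further: with $f(u)=0$, $f(x_1)=3$, $f(y_1)=5$, the vertex $x_2$ is forced to label $1$ (it must avoid $\{2,3,4\}$ as a neighbour of $x_1$ and avoid $0$ and $5$ at distance two), and then $y_2$ must satisfy $f(y_2)\le 3$ (neighbour $y_1$ labelled $5$), $f(y_2)\ge 3$ (neighbour $x_2$ labelled $1$) and $f(y_2)\ne 3$ (distance two from $x_1$), which is impossible. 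You do gesture at this route (``one checks the local labels around $u,x_1,y_1,x_2,y_2$ cannot be completed'') but leave it unexecuted and instead fall back on the incorrect uniform check, so as written the proof is incomplete. A secondary slip: in $G(l)$ the vertices $x_1,x_l,y_1,y_l$ have degree three (each is adjacent to $u$ or $v$, to a horizontal neighbour and to its vertical partner), so the only degree-two vertices are $u$ and $v$ and there is just one boundary case up to symmetry.
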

\begin{proof} By the symmetry between $H_1$ and $H_2$, it suffices to prove this for any component $H$ of $H_1$.
Suppose to the contrary that $V(H)=\{w\}$ for some $w \in V(G(l))$. Since each vertex of $G(l)$ has degree 2 or 3, we have $|N(w)| = 2$ or $3$. Assume
$N(w)=\{w_1, w_2, w_3\}$ first.  If $f(w)=i$, then $i\in \{0, 2, 4\}$ and $f(w_1), f(w_2),
f(w_3)\in \{1, 3, 5\}$. By the $L(2, 1)$-condition, $f(w_1)$, $f(w_2)$ and $f(w_3)$ are distinct and $\{f(w_1), f(w_2), f(w_3)\}=\{1, 3, 5\}$.
Thus there is at least one $w_j$ such
that $f(w_j)=i+1$ or $i-1$. This is a contradiction as $w_j$ is adjacent to $w$.

Assume then $N(w)=\{w_1, w_2\}$. By the structure of $G(l)$, we may assume $w=u, w_1=x_1$ and $w_2=y_1$. Since $H$ is a component of $H_1$, we have $f(u)\in S_1$ and
$f(x_1), f(y_1)\in S_2$, which implies $f(u)=0$ and $\{f(x_1), f(y_1)\} = \{3, 5\}$. Without loss of generality we may assume $f(x_1)=3$ and $f(y_1)=5$. By the $L(2,1)$-condition we have $f(x_2)=1$.
This implies that $y_2$ cannot be assigned any label from $[0,5]$ without violating the $L(2,1)$-condition, a contradiction again.
\end{proof}

\begin{lemma}
\label{le02}
Let $H$ be a component of $H_1$ or $H_2$. Then the following hold:
\begin{itemize}
\item[\rm (i)] $H$ contains no 4-cycle $x_iy_iy_{i+1}x_{i+1}x_i$, where $1\leq i\leq l$;
\item[\rm (ii)] $H$ cannot contain a 3-vertex and all its neighbors;
\item[\rm (iii)] if $l$ is not a multiple of $3$, then $H$ cannot contain any cycle; if $l$ is a multiple of $3$, then either $H$ itself is a 3-cycle or it does not contain any cycle.
\end{itemize}
\end{lemma}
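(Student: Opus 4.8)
The plan is to base everything on a single observation: in any ``cell'' $x_i x_{i+1} y_{i+1} y_i$ the four vertices are pairwise at distance at most two in $G(l)$, so under a $5$-$L(2,1)$-labeling $f$ they receive four pairwise distinct labels. Parts (i) and (ii) then become counting arguments. For (i), if such a $4$-cycle lay inside a component $H$ of $H_1$ (or $H_2$), its four vertices would need four distinct labels from the three-element set $\{0,2,4\}$ (resp. $\{1,3,5\}$), which is impossible. For (ii), a degree-$3$ vertex $w$ together with its neighbours $w_1,w_2,w_3$ would again force four pairwise distinct labels into a three-element set: the $w_j$ are mutually at distance two through $w$, hence pairwise distinct, and each differs from $f(w)$.

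For (iii) I would first reduce the existence of a cycle to the existence of an induced (chordless) one. Since $H$ is an induced subgraph of $G(l)$, every chordless cycle of $H$ is a chordless cycle of $G(l)$, so it suffices to classify the latter. A short case analysis on how a cycle can meet $u$, $v$ and the two paths shows that the only chordless cycles of $G(l)$ are the two end-triangles $u x_1 y_1$ and $v x_l y_l$ and the $l-1$ cells $x_i x_{i+1} y_{i+1} y_i$: a cycle avoiding $u,v$ that spans columns $i$ and $j$ with $j \ge i+2$ has the chord $x_{i+1} y_{i+1}$, and a cycle through $u$ other than the triangle uses both edges $ux_1,uy_1$ and hence has the chord $x_1 y_1$ (similarly for $v$). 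By part (i) no cell lies in $H$, so if $H$ contains a cycle it contains an end-triangle; by the reversal automorphism $x_i \mapsto x_{l+1-i}$, $y_i \mapsto y_{l+1-i}$, $u \leftrightarrow v$ we may assume it is $u x_1 y_1$, and (replacing $f$ by $5-f$ if necessary) that it lies in $H_1$.

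It then remains to show that a triangle $u x_1 y_1$ in $H_1$ forces $3 \mid l$ and that the component $H$ is exactly this triangle. Here $f(u),f(x_1),f(y_1)$ form a permutation of $\{0,2,4\}$; the vertices $x_2,y_2$, being adjacent to $x_1,y_1$ and at distance two from the whole triangle, are forced into $\{1,3,5\}$, so the component containing $u$ cannot extend beyond $\{u,x_1,y_1\}$. I would then propagate the labels rung by rung along the ladder, using at each cell the four-distinct-labels property together with the distance-two constraints $f(x_{i+1}) \ne f(x_{i-1})$ and $f(y_{i+1}) \ne f(y_{i-1})$. The key computation is that $f(u)=2$ and $f(u)=4$ die within two cells (no admissible label survives for $x_3$ or $y_3$), so up to the $x \leftrightarrow y$ symmetry one is left with $(f(u),f(x_1),f(y_1))=(0,2,4)$; from there the pairs $(f(x_i),f(y_i))$ are forced (the apparent branches being eliminated one step later) and become periodic, equal to $(0,3),(2,5),(4,1)$ for $i \equiv 0,1,2 \pmod 3$ respectively when $i \ge 3$. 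Finally $v$ must satisfy $|f(v)-f(x_l)| \ge 2$, $|f(v)-f(y_l)| \ge 2$ and $f(v) \notin \{f(x_{l-1}),f(y_{l-1})\}$, and checking $l \equiv 1,2 \pmod 3$ shows that no legal value exists in those cases; hence $3 \mid l$, which completes (iii).

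The main obstacle is this propagation argument. One must verify that the labeling really is forced, including the elimination of the dead-end branches and of the two sub-cases $f(u)\in\{2,4\}$, identify the period-$3$ pattern, and confirm that a consistent label at $v$ exists only when $l \equiv 0 \pmod 3$. Everything earlier---parts (i), (ii), and the classification of chordless cycles---is routine, so the entire difficulty of (iii) is concentrated in this finite but careful case analysis, which is also precisely where the modular condition on $l$ originates.
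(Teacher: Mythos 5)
Your proposal is correct and follows essentially the same route as the paper: parts (i) and (ii) by the same pigeonhole argument (four pairwise-distinct labels cannot fit in a three-label class), and part (iii) by reducing any cycle in $H$ to one of the end-triangles (the paper notes directly that a cycle of length at least $4$ forces a cell into $H$, while you pass through chordless cycles, but this is the same observation), followed by the identical forced propagation: the cases $f(u)\in\{2,4\}$ die quickly, $f(u)=0$ forces the period-$3$ pattern you describe, and $v$ admits a label only when $3\mid l$. The computational details you flag as the "main obstacle" are exactly the ones the paper carries out, and your stated outcomes (including the branch at $x_3$ being killed by $y_3$) match.
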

\begin{proof}
(i) This follows immediately from the $L(2, 1)$-condition.

(ii) Suppose to the contrary that $H$ contains a degree-three vertex $w$ and its
neighbors $w_1, w_2, w_3$. Let $f(w)=j\in S_i$, where $i = 1$ or $2$. Then $f(w_1),
f(w_2), f(w_3)\in S_i\setminus \{j\}$ and hence there exist $s,
t\in \{1, 2, 3\}, s\not=t$ such that $f(w_s)=f(w_t)$. However, this violates the $L(2, 1)$-condition.

(iii) Since the roles of $H_1$ and $H_2$ are symmetric, it suffices to prove the results for $H_1$.
Suppose that a component $H$ of $H_1$ contains a cycle $C$. If $|V(C)|\geq 4$,
then $H$ contains $x_i, x_{i+1}, y_{i+1}, y_i$ for some $i$, contrary to (i).
Thus $|V(C)|=3$ and so by symmetry we may assume
$C=ux_1y_1u$.

Consider $f(u)=0$ first. In this case we may
assume $f(x_1)=2$ and $f(y_1)=4$ by symmetry.
Then $f(x_2)=5$, $f(y_2)=1$, $f(x_{3k})=0$, $f(y_{3k})=3$, $f(x_{3k+1})=2$,
$f(y_{3k+1})=5$, $f(x_{3k+2})=4$ and $f(y_{3k+2})=1$ for $k\geq 1$. Thus $H = C$ is a 3-cycle and moreover $v$ cannot be assigned any label from $[0,5]$ unless $3$ divides $l$.

In the case when $f(u)=2$, we may
assume $f(x_1)=0$ and $f(y_1)=4$ by symmetry. Then $f(x_2)=3$
or 5, and $f(y_2)=1$. When $f(x_2)=3$, we have $f(x_{3})=5$ and $y_3$ cannot be assigned any label from $[0, 5]$. When $f(x_2)=5$, we have $f(x_{3})=3$ or 2, and $y_3$
cannot be assigned any label from $[0, 5]$.

In the case when $f(u)=4$, we may assume $f(x_1)=0$ and $f(y_1)=2$ by symmetry. Then $f(x_2)=3$ and $f(y_2)=5$. This implies that $x_3$ must be assigned 1 and $y_3$ cannot be assigned any label from $[0, 5]$.
\end{proof}

\medskip

A component $H$ of $H_i$ is said to be a {\it path component} if $H$ is a path, where $i=1, 2$. We say that a path component {\it $H$ contains a path $P$} if $V(P)\subseteq V(H)$.

\begin{lemma}
\label{le03}
Let $H$ be a path component of $H_i$, where $i = 1, 2$. Then an end vertex $w$  of $H$ with $d(w)=3$
 must be assigned 0 if $i=1$ and 5 if $i=2$.
\end{lemma}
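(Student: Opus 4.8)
\textbf{Proof proposal for Lemma~\ref{le03}.}

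The plan is to argue by symmetry and then analyze the local structure around the end vertex $w$. Since assigning $5-f(\cdot)$ to each vertex interchanges $H_1$ and $H_2$ (as noted before Lemma~\ref{le001}), it suffices to treat the case $i=1$, so that $f(w)\in\{0,2,4\}$, and show $f(w)=0$. The key point is that an end vertex of a \emph{path} component has exactly one neighbor inside $H$, while by hypothesis $d(w)=3$; hence $w$ has exactly two neighbors lying in the \emph{other} class $H_2$, i.e. labeled from $\{1,3,5\}$. So the whole argument reduces to asking: for which value $f(w)\in\{0,2,4\}$ can $w$ simultaneously have one neighbor in $\{0,2,4\}$ (the neighbor inside $H$) and two neighbors in $\{1,3,5\}$, all three neighbors respecting the $L(2,1)$-condition and being pairwise at distance two (hence receiving distinct labels)?

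First I would record the distance-two constraint: the three neighbors $w_1,w_2,w_3$ of $w$ are pairwise at distance two in $G(l)$, so they receive three distinct labels. Exactly one of them, say $w_1$, lies in $H$ and is labeled from $\{0,2,4\}$; the other two are labeled from $\{1,3,5\}$ and, being distinct, their labels form a $2$-subset of $\{1,3,5\}$. Now I would run through the three possibilities for $f(w)$. If $f(w)=2$, then the $L(2,1)$-condition forbids labels $1,2,3$ on neighbors, so the two neighbors in $\{1,3,5\}$ would both have to be $5$ — impossible since they are distinct. If $f(w)=4$, then neighbors avoid $3,4,5$, forcing both $\{1,3,5\}$-neighbors to be $1$ — again impossible. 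Only $f(w)=0$ survives: neighbors avoid $0,1$, leaving $3$ and $5$ as the two available labels in $\{1,3,5\}$, which can be assigned to the two outside neighbors, and $2$ or $4$ to the neighbor inside $H$. Hence $f(w)=0$, as required.

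The routine engine here is exactly Lemma~\ref{le01}(iii): a label in $\{2,4\}$ has only \emph{one} available neighbor label in $\{1,3,5\}$, whereas a label in $\{0,6\}$ has two; since $w$ needs two distinct neighbor labels from $\{1,3,5\}$, its label must lie in $\{0,6\}$, and within $\{0,2,4\}$ this forces $f(w)=0$. I expect the only genuinely necessary verification — the main (though still small) obstacle — to be confirming that the two outside neighbors $w_2,w_3$ of $w$ really are at distance two from each other, so that they must receive \emph{distinct} labels; this is where the specific structure of $G(l)$ enters, since $w$ is a degree-three vertex and its two neighbors outside $H$ are non-adjacent (the graph is triangle-free along these configurations except for the $ux_1y_1$ and $vx_ly_l$ triangles, which must be checked separately). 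Once distinctness is secured, the label count forces $f(w)=0$ and the symmetric argument gives $f(w)=5$ when $i=2$.
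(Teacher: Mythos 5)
Your proposal is correct and follows essentially the same route as the paper: reduce to $i=1$ by the $f\mapsto 5-f$ symmetry, note that the end vertex has exactly one neighbor in $H$ and two neighbors labeled from $\{1,3,5\}$ which must receive distinct labels, and observe that $f(w)\in\{2,4\}$ leaves only one admissible label in $\{1,3,5\}$, forcing $f(w)=0$. The distinctness worry you flag is a non-issue, since the two outside neighbors receive distinct labels whether they are adjacent or at distance two.
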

\begin{proof}
Since the roles of $H_1$ and $H_2$ are symmetric, it suffices to prove the result for $i = 1$.
By Lemmas~\ref{le001}, the length of $H$ is greater than 2.
Assume $H=w_1\ldots w_l$ with $d(w_1)=3$. Let $N(w_1)=\{w_2,
z_1, z_2\}$.  Then $z_1, z_2\notin V(H)$ and $f(z_1), f(z_2)\in \{1, 3, 5\}$. If $f(w_1)=a\not=0$, then
$f(z_j)=a-1$ or $a+1$ for some $j = 1, 2$, which violates the $L(2,1)$-condition.
\end{proof}

\begin{lemma}
\label{le04}
Let $H$ be a path component of $H_1$ or $H_2$. Then the following hold:
\begin{itemize}
\item[\rm (i)] $H$ contains no 3-path $x_ix_{i+1}y_{i+1}y_{i+2}$;
\item[\rm (ii)] if $H$  contains a 2-path $x_iy_{i}y_{i+1}$ (or $y_ix_ix_{i+1}$) such that $x_i$
is an end vertex of a path component of $H_j$ for $j = 1, 2$, then $i=2$;
\item[\rm (iii)] if $H$ contains a 2-path $x_iy_{i}y_{i-1}$ (or $y_ix_ix_{i-1}$) such that $x_i$
is an end vertex of a path component of $H_j$ for $j = 1, 2$, then $i=l-1$.
\end{itemize}
\end{lemma}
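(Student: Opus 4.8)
The plan is to reduce all three parts to the single case in which $H$ is a path component of $H_1$ and the distinguished rung-endpoint is $x_i$, using two symmetries. The relabelling $w\mapsto 5-f(w)$ (already noted to preserve $5$-$L(2,1)$-labelings) interchanges $H_1$ and $H_2$, so I may assume $H\subseteq H_1$; and the automorphism of $G(l)$ swapping the two rails, $x_j\leftrightarrow y_j$ (fixing $u,v$), converts $y_ix_ix_{i+1}$ into $x_iy_iy_{i+1}$, so in (ii) it suffices to treat the form written with $x_i$. The one combinatorial fact I would isolate and reuse throughout is: \emph{in any $5$-$L(2,1)$-labeling, if a vertex $w$ has an odd label and two neighbours carrying distinct even labels $p,q$, then $\{p,q\}=\{0,2\}$ and $f(w)=5$.} This is immediate by inspecting the three odd labels: $1$ is at distance $\ge 2$ only from the even label $4$, $3$ only from $0$, and $5$ only from $0$ and $2$; hence two even neighbours are possible only for the label $5$, with the pair $\{0,2\}$. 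In particular, \emph{the even label $4$ can never appear on a neighbour of an odd vertex that also has a second even neighbour.}

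For (i), suppose $H\subseteq H_1$ contains the $3$-path $x_ix_{i+1}y_{i+1}y_{i+2}$ and put $a=f(x_i),\,b=f(x_{i+1}),\,c=f(y_{i+1})$. The edges $x_ix_{i+1},x_{i+1}y_{i+1}$ and the distance-two pair $\{x_i,y_{i+1}\}$ force $a,b,c$ to be distinct, so $\{a,b,c\}=\{0,2,4\}$. Since $H$ is a path, $y_{i+1}$ has $H$-neighbours exactly $x_{i+1},y_{i+2}$, whence $y_i\notin H_1$; likewise $x_{i+2}\notin H_1$. Now $y_i$ is an odd vertex with even neighbours $x_i,y_{i+1}$, so the fact gives $\{a,c\}=\{0,2\}$ and therefore $b=4$. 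But $x_{i+2}$ is an odd vertex one of whose even neighbours, $x_{i+1}$, carries the forbidden label $b=4$, contradicting the fact. This proves (i).

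For (ii), assume $H\subseteq H_1$ contains the $2$-path $x_iy_iy_{i+1}$ with $x_i$ an end vertex of $H$; as in (i), $\{f(x_i),f(y_i),f(y_{i+1})\}=\{0,2,4\}$. Because $x_i$ is an endpoint its only $H$-neighbour is $y_i$, so $x_{i+1},x_{i-1}\notin H_1$ (and $u\notin H_1$ when $i=1$). Applying the fact to the odd vertex $x_{i+1}$ with even neighbours $x_i,y_{i+1}$ yields $\{f(x_i),f(y_{i+1})\}=\{0,2\}$, $f(y_i)=4$ and $f(x_{i+1})=5$. I would then propagate labels backwards along the rails. The distance-two pair $\{x_{i-1},x_{i+1}\}$ gives $f(x_{i-1})\ne 5$; since $x_{i-1}$ is odd and adjacent to $x_i$, this forces $f(x_i)=0$ and $f(x_{i-1})=3$. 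The vertex $y_{i-1}$ lies within distance two of the even labels $0,2,4$ on $x_i,y_{i+1},y_i$, so it is odd, and being adjacent to $y_i=4$ it must satisfy $f(y_{i-1})=1$. If $i\ge 3$, one further step is fatal: the same adjacency and distance-two constraints force $f(x_{i-2})=5=f(y_{i-2})$, impossible across the rung $x_{i-2}y_{i-2}$. The boundary case $i=1$ is excluded separately, because the odd vertex $u$ would need to be at distance $\ge 2$ from both $f(y_1)=4$ (forcing $u=1$) and $f(x_1)\in\{0,2\}$, which is impossible. Hence $i=2$. This backwards descent is the crux of the whole lemma: it hinges on every label being pinned uniquely, and is the step most sensitive to a mis-derived forced value.

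Finally, (iii) follows from (ii) by the reflection automorphism $j\mapsto l+1-j$ of $G(l)$ (which swaps $u\leftrightarrow v$): it carries the $2$-path $x_iy_iy_{i+1}$ to $x_{i'}y_{i'}y_{i'-1}$ with $i'=l+1-i$, preserves the property that the relevant vertex is an end of its path component, and sends the conclusion $i=2$ to $i'=l-1$.
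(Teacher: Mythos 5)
Your proposal is correct and follows essentially the same route as the paper: the ``fact'' you isolate (an odd-labelled vertex with two distinct even-labelled neighbours must carry $5$ next to $\{0,2\}$) is exactly the content the paper extracts from Lemmas \ref{le01} and \ref{le03}, and your backwards descent in (ii) pins the same forced labels $f(x_i)=0$, $f(x_{i-1})=3$, $f(y_{i-1})=1$, $f(y_i)=4$, $f(y_{i+1})=2$, $f(x_{i+1})=5$ before reaching the same contradiction for $i\ge 3$ and $i=1$. The only cosmetic differences are that you make the label-forcing observation self-contained rather than citing the earlier lemmas, and you dispatch (iii) by the explicit reflection automorphism $j\mapsto l+1-j$ where the paper merely says the proof is similar.
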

\begin{proof}
Since the roles of $H_1$ and $H_2$ are symmetric, we may assume that $H$ is a path component of $H_1$.

(i) Suppose to the contrary that $H$ contains the 3-path $x_ix_{i+1}y_{i+1}y_{i+2}$. By Lemma~\ref{le02}, $y_i, x_{i+2}\in V(H_2)$. By Lemma~\ref{le03}, $f(y_i)=f(x_{i+2})=5$. So $\{f(x_i), f(x_{i+1}), f(y_{i+1}), f(y_{i+2})\}=\{0, 2\}$, which violates the $L(2, 1)$-condition.

(ii) Suppose that $H$ contains such a 2-path $x_iy_{i}y_{i+1}$.  If $i \geq 2$, then $x_{i-1}\in V(H_2)$ by (i). Hence $x_{i+1}\in V(H_2)$ and $y_{i-1}\in V(H_2)$ by Lemma~\ref{le02}. It follows that  $f(x_{i+1})=5$, $f(x_i)=0$, $f(x_{i-1})=3$, $f(y_{i-1})=1$, $f(y_{i+1})=2$ and $f(y_i)=4$.   If $i\geq 3$, then  by symmetry $x_{i-2}$ must be assigned 5, but then $y_{i-2}$ cannot be assigned any label in $[0, 5]$. If $i=1$, then $f(x_1)=0$, $f(y_1)=2$, $f(y_2)=4$ and $f(u)=5$ by symmetry, but then $x_2$ cannot be assigned any label $[0, 5]$. Therefore, $i=2$.

(iii) The proof is similar to that of (ii).
\end{proof}

\begin{lemma}
\label{le05}
Let $H$ be a  component of $H_1$ or $H_2$. Then $H$ is one of the following:
\begin{itemize}
\item[\rm (i)] a 3-cycle;
\item[\rm (ii)] the path $x_3x_4\ldots x_{l}$ {\rm(}or $y_3y_4\ldots y_{l}${\rm )};
\item[\rm (iii)] the path $x_3x_4\ldots x_{l-1}y_{l-1}$ {\rm(}or $y_3y_4\ldots y_{l-1}x_{l-1}${\rm )};
\item[\rm (iv)] the path $x_2y_2\ldots y_{l-2}$ {\rm (}or $y_2x_2\ldots x_{l-2}${\rm )};
\item[\rm (v)] the path $ux_1x_2\ldots x_{l}$ {\rm (}or $uy_1y_2\ldots y_{l}${\rm )};
\item[\rm (vi)] the path $y_1y_2\ldots y_{l}v$ {\rm (}or $x_1x_2\ldots x_{l}v${\rm )};
\item[\rm (vii)]  the path $x_2y_2\ldots y_{l}v$ {\rm (}or $y_2x_2\ldots x_{l}v${\rm )}.
\end{itemize}
\end{lemma}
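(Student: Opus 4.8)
The plan is to classify the components of $H_1$; those of $H_2$ then follow from the complementation symmetry $w\mapsto 5-f(w)$ noted above, and the many ``or'' alternatives in the statement are obtained from the two graph automorphisms of $G(l)$, namely the rail swap $x_i\leftrightarrow y_i$ and the end reversal $x_i\mapsto x_{l+1-i}$, $y_i\mapsto y_{l+1-i}$, $u\leftrightarrow v$. First I would dispose of the cyclic case: if a component $H$ of $H_1$ contains a cycle then by Lemma~\ref{le02}(iii) it is a triangle, and the only triangles of $G(l)$ are $ux_1y_1$ and $vx_ly_l$, giving item (i). So from now on $H$ is a path component, which by Lemma~\ref{le001} has length at least two.

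The key structural step is to show that $H$ never uses a rung $x_iy_i$ as an \emph{internal} edge, i.e. with both $x_i$ and $y_i$ interior vertices of the path. For $2\le i\le l-1$ I would examine how the path continues on the two sides of such a rung: if it proceeds to the same neighbouring index on both rails (to $x_{i-1},y_{i-1}$ or to $x_{i+1},y_{i+1}$) it closes a $4$-cycle through the adjacent rung, impossible since $H$ is acyclic (cf. Lemma~\ref{le02}(i)); if it proceeds to opposite neighbouring indices it creates the zig-zag $x_{i-1}x_iy_iy_{i+1}$ or its end reversal $x_{i+1}x_iy_iy_{i-1}$, both forbidden by Lemma~\ref{le04}(i) (the second after applying the end-reversal symmetry). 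For $i=1$ (and symmetrically $i=l$) the same continuations either close the $4$-cycle through rung $2$ or create the triangle $ux_1y_1$, so they too are impossible. Hence every rung edge of $H$ is incident with an end vertex of $H$; since a path has only two ends, $H$ crosses at most one rung at each end and runs along a single rail in between.

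With this in hand the enumeration records two pieces of data: whether $u$ and/or $v$ lie on $H$ (a cap must be an \emph{end} of $H$, since an interior $u$ would force $x_1,y_1\in H$ and hence the triangle $ux_1y_1$), and whether $H$ crosses zero or one rung. The locations of the ends and of any crossing are then pinned down by Lemma~\ref{le03} (a degree-three end vertex of a component of $H_1$ must be labelled $0$) together with Lemma~\ref{le04}(ii),(iii): a rung crossing situated at an end vertex and continued in the direction of increasing (respectively decreasing) index must occur at rung $2$ (respectively rung $l-1$). Matching the admissible combinations of these data against the constraints narrows the possibilities to the paths listed in (ii)--(vii), once the candidates that are \emph{not} on the list have been ruled out.

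I expect the main obstacle to be exactly that last ruling-out, i.e. the boundary bookkeeping that produces the precise indices $3$ and $l-2$ and eliminates the spurious shapes. The most stubborn of these is the double-crossing path $x_2y_2\cdots y_{l-1}x_{l-1}$, which satisfies both crossing conditions of Lemma~\ref{le04}(ii),(iii) yet cannot occur: its two degree-three ends are forced to $0$ by Lemma~\ref{le03}, the distance-two conditions then force the interior rung partners $x_3,x_{l-2}\in H_2$, which in turn force $y_3=y_{l-2}=2$; but the inner $y$-path must be a period-three, non-backtracking sequence on $\{0,2,4\}$, and these two forced values are incompatible with an end label avoiding $0$. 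The same style of forced-label propagation, ending in a distance-two clash (typically near a cap), is what shows that a cap-free rail component must reach an extreme index (e.g. $x_3\cdots x_l$ rather than $x_2\cdots x_l$) and what rules out mixed cap-and-crossing paths; care is likewise needed at the boundary rungs $x_1y_1$, $x_ly_l$ and in the interaction between an $H_1$-component and the complementary $H_2$-components.
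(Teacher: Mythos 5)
Your proposal is correct and follows essentially the same route as the paper's proof: the same supporting lemmas (Lemmas~\ref{le001}--\ref{le04}) drive a case analysis on where the component $H$ crosses rungs $x_iy_i$ and meets the caps $u,v$, finished off by forced-label propagation to eliminate the residual shapes. Your explicit ``no internal rung'' observation is a slightly cleaner way of organizing what the paper does by splitting on the minimum index $i$ with $x_i\in V(H)$ and on which neighbours of $x_i$ lie in $H$, but the substance --- including the label-propagation elimination of the double-crossing path $x_2y_2\cdots y_{l-1}x_{l-1}$, which you carry out correctly --- is the same.
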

\begin{proof} By symmetry, we may assume that $H$ is a component of $H_1$ and $x_i\in V(H)$ with minimum subscript $i$. By Lemma~\ref{le001}, $x_{i+1}\in V(H)$ or $y_i\in V(H)$.

Assume first that $x_{i+1}\in V(H)$ and $y_i\notin V(H)$. Let $x_j\in V(H)$ be such that $j$ is maximum. By Lemma~\ref{le04}, $x_i, x_{i+1}, \ldots, x_j\in V(H)$. If $i=1$, then $y_1\in V(H_2)$ by Lemma~\ref{le04}. By symmetry, $j=l$. By Lemma~\ref{le02}, the vertices of $H$ are assigned $0\,2\,4\,0\,2\,4\ldots 0\,2\,4\,0$ or $0\,4\,2\,0\,4\,2\ldots 0\,4\,2\,0$ successively. We thus conclude that $H$ is the path $ux_1x_2y_2\ldots x_{l}$ or $x_1x_2\ldots x_{l}v$ and hence (v) or (vi) holds.
If $i\geq 2$ and $i\not=3$, then $x_{i-1}\in V(H_2)$. Since $y_i\in V(H_2)$, $y_{i-1}\notin V(H_2)$ by Lemma~\ref{le04}. Thus $y_{i-1}\in V(H_1)$. It follows that $x_{i-1}$ is an end vertex of a path of $H_2$ and so is $y_i$. By Lemma~\ref{le03}, $x_{i-1}$ and $y_i$ are assigned 5, contradicting the $L(2, 1)$-condition. Thus, assume that $i=3$. By Lemma~\ref{le04}, $j=l-1$ or $l$. If $j=l-1$, by Lemma~\ref{le04}, $y_{l-1}\in V(H)$, $y_l, y_{l-2}\in V(H_2)$. We thus conclude that (iii) holds.  If $j=l$, then  (ii) holds.

Next we assume that $x_{i+1}\notin V(H)$ and $y_i\in V(H)$. If $i=1$,  then $x_{2}, y_2\in V(H_2)$ by Lemma~\ref{le04}. It follows that $H$ is a 3-cycle and (i) holds. So we assume $i\geq 2$. If $i=2$, then $x_1\in V(H_2)$ and  $y_1\in V(H_2)$ by Lemma~\ref{le04}.  Thus $x_3\in V(H)$ or $y_3\in V(H)$. By symmetry, we may assume $x_3\in V(H)$. By Lemma~\ref{le04}, we may assume $H=y_2x_2\ldots x_j$ such that $j$ is maximum. By Lemma~\ref{le04}, $i=2$ and $j\geq l-2$. Let $j=l-2$. In this case, (iv) holds. If $j=l-1$, then $y_{l-1}\in V(H_2)$ by Lemma~\ref{le04}. Thus, $y_{l},x_{l}\in V(H_2)$, contradicting Lemma~\ref{le04}. If $j=l$, by Lemma~\ref{le02}, $y_3, y_4, \ldots, y_l\in V(H_2)$, which form a path. Note that the vertices of $H$ are assigned $0\,2\,4\,0\,2\,4\ldots 0\,2\,4\,0$ or $0\,4\,2\,0\,4\,2\ldots 0\,4\,2\,0$ sequentially, and $y_3, y_4, \ldots, y_l$ should be assigned $5\,3
\,1\,5\,3\,1\ldots 5\,3\,1\,5$ or $5\,1\,3\,5\,1\,3\ldots 5\,1\,3\,5$ sequentially, which implies $v\in V(H)$. We conclude that (ii) holds.

Finally, we assume that $x_{i+1}\in V(H)$ and $y_i\in V(H)$. Then $i=2$ by Lemma~\ref{le04}. Let $x_{3},\ldots, x_j\in V(H)$ such that $j$ is the maximum subscript. Then $x_{j+1}, y_{j+1}, y_j, \ldots, y_3\in V(H_2)$. By Lemma~\ref{le04}, $j=l-2, l-1, l$. If $j=l-1$, then $y_3, \ldots y_l, x_l, v\in V(H_2)$, which implies that $v, y_l, x_l, x_{l-1}$ cannot be assigned labels from $\{1, 3, 5\}$ without violating the $L(2, 1)$-condition, a contradiction. Thus, $j=l-2$ or $l$, which means that (iv) or (vii) holds.
\end{proof}

\begin{thm}
\label{lower}
Let $l \ge 4$ be an integer which is not a multiple of $3$. Then $\lambda (G(l)) \geq 6$.
\end{thm}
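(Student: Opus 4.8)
The plan is to argue by contradiction: suppose $G(l)$ admits a $5$-$L(2,1)$-labeling $f$, and let $H_1=f^{-1}(\{0,2,4\})$ and $H_2=f^{-1}(\{1,3,5\})$ be as set up above. Since $l$ is not a multiple of $3$, Lemma~\ref{le02}(iii) rules out $3$-cycle components, so by Lemma~\ref{le05} every component of $H_1$ and $H_2$ is a path of one of the types (ii)--(vii). The first step is to show that the vertex partition is then completely forced. The vertex $u$ lies in no component of types (ii)--(iv),(vi),(vii), so its component must be of type (v); using the automorphism $x_i \leftrightarrow y_i$ together with the symmetry $f \mapsto 5-f$, I may assume this component is $u x_1 x_2 \cdots x_l$ and that it lies in $H_1$. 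This uses up $u$ and all the $x_i$, leaving $v, y_1, \ldots, y_l$. Among the types listed in Lemma~\ref{le05}, the only one avoiding $u$ and all $x_i$ while containing $y_1$ is type (vi), namely $y_1 y_2 \cdots y_l v$. Hence the partition must be $H_1=\{u,x_1,\ldots,x_l\}$ and $H_2=\{v,y_1,\ldots,y_l\}$, each a single path.

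Next I would pin down the labels. On the path $u x_1 \cdots x_l$ every window of three consecutive vertices is pairwise within distance two in $G(l)$, so the $L(2,1)$-condition forces these three vertices to receive the three distinct values of $\{0,2,4\}$; thus $f$ restricted to this path is periodic with period $3$, and likewise for $y_1 \cdots y_l v$ with the palette $\{1,3,5\}$. Since $x_l$ (resp.\ $y_1$) is a degree-$3$ end vertex of its path component, Lemma~\ref{le03} gives $f(x_l)=0$ and $f(y_1)=5$. The crucial observation is that the rungs $x_i y_i$ force a rigid correspondence between the two periodic patterns: checking $|a-b|\ge 2$ for $a\in\{0,2,4\}$ and $b\in\{1,3,5\}$ shows that $4$ is compatible only with $1$ and $2$ only with $5$, so the matching is forced and $0$ must pair with $3$. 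Because both patterns have period $3$, this yields $f(y_i)=g(f(x_i))$ for every $i$, where $g(0)=3$, $g(2)=5$, $g(4)=1$.

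Finally I would extract the divisibility contradiction. From $f(y_1)=5=g(f(x_1))$ we get $f(x_1)=2$. Since $u$ is adjacent to $y_1$ and $f(y_1)=5$, we have $|f(u)-5|\ge 2$, so $f(u)\in\{0,2\}$; as $u$ and $x_1$ are adjacent we have $f(u)\ne f(x_1)=2$, whence $f(u)=0$. Thus the period-$3$ pattern on $u x_1 x_2 \cdots$ is $0,2,4,0,2,4,\ldots$, so $f(x_i)=0$ holds exactly when $i\equiv 0 \pmod 3$. But $f(x_l)=0$ then forces $l\equiv 0\pmod 3$, contradicting the hypothesis that $l$ is not a multiple of $3$. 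This completes the proof that $\lambda(G(l))\ge 6$.

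I expect the bulk of the difficulty to be already absorbed into the structural Lemmas~\ref{le001}--\ref{le05}, which reduce the problem to a single rigid configuration; the only genuinely new ingredient is recognizing that the two period-$3$ colour patterns are locked together by the rungs $x_i y_i$, after which the boundary labels $f(y_1)=5$ and $f(u)=0$ propagate around the ladder and the residue of $l$ modulo $3$ decides consistency.
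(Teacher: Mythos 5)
Your proof is correct and follows essentially the same route as the paper's: reduce via the $H_1/H_2$ decomposition and Lemmas \ref{le001}--\ref{le05} to the forced component structure, then derive the contradiction $l\equiv 0\pmod 3$ from the period-three pattern on $ux_1\cdots x_l$ with both end labels equal to $0$. Your endgame is in fact slightly cleaner: you dismiss the $3$-cycle case (i) directly via Lemma \ref{le02}(iii), and you justify $f(u)=0$ through the rung matching and $f(y_1)=5$ rather than by invoking Lemma \ref{le03} at the degree-two vertex $u$ as the paper does.
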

\begin{proof}
Suppose to the contrary that $G(l)$ admits a 5-$L(2, 1)$-labeling $f$. Recall that the vertices $u, v, x_1, \ldots x_l, y_1, \ldots y_l$ of $G(l)$ are as shown in Figure \ref{fig:gl}. By symmetry, we may assume that $H$ is a component of $H_1$ containing $u$. Then $H$ is as in (i) or (v) of Lemma~\ref{le05}. If $H$ is as in (i), then by Lemmas~\ref{le04} and \ref{le05}, the path in (ii) or (iii) is a component $K$ of $H_1$. If $K$ is as in (ii), then the path in (vii) is a component of $H_2$. By Lemma~\ref{le03},  the vertices of $K$ are assigned $0\,2\,4\,0\,2\,4\ldots 0\,2\,4\,0$ or $0\,4\,2\,0\,4\,2\ldots 0\,4\,2\,0$ sequentially.  By Lemma~\ref{le05}, $l-2=3k+1$ and $l$ is a multiple of $3$, a contradiction. The proof is similar in the case when $K$ is as in (iii). If $H$ is as in (v), then $x_l$ and $u$ must be assigned 0 by Lemma~\ref{le03}. Thus the vertices of $x_l, x_{l-1}, \ldots, x_1, u$ must be assigned $0\,2\,4\ldots, 0\,2\,4,\ldots,0\,2\,4\,0$ or $0\,4\,2\ldots 0\,4\,2\ldots,0\,4\,2\,0$ sequentially.  By Lemma~\ref{le05}, $l+1=3k+1$ and $l$ is a multiple of $3$, a contradiction again.
\end{proof}

%extension technique 1
\section{Extension technique 1}
\label{sec:ext1}

\begin{notn}
\emph{Let $C=u_1u_2\ldots u_lu_1$ be a cycle of length $l \ge 4$, and let $v_1$ and $v_2$ be two additional vertices not on $C$. Define $H$ be the graph obtained from $C$ by adding the edges $u_1v_1$ and $u_2v_2$. Denote $P=v_1u_1u_2v_2$, which is a path of $H$. Let $H_1$ denote the subgraph of $H$ induced by $\{v_1, v_2, u_1, u_2, u_3, u_l\}$.}

\emph{Throughout this section, $H, P$ and $H_1$ are as above, and $f$ is a fixed path-extendable 6-$L(2, 1)$-labeling of $P$.}
\end{notn}

The main result in this section, Theorem~\ref{le43}, states that any given path-extendable 6-$L(2, 1)$-labeling of $P$ can be extended to a 6-$L(2, 1)$-labeling of $H$. To establish this result we need to prove a few lemmas first.

\begin{lemma}
\label{le43-1}
Suppose $\{f(u_1), f(u_2)\}\cap \{1, 3, 5\}\not=\emptyset$ and $f$ can be extended to a 6-L(2,1)-labeling  of $H_1$ such that $f(u_3), f(u_l)\in \{0, 2, 4, 6\}$. Suppose further that $f(u_3)=f(u_l)$ if and only if $l\equiv 0$ (mod 3). Then $f$ can be extended to a 6-$L(2, 1)$-labeling $f_1$ of $H$ such that $f_1|_{C-\{u_1u_2\}}$ is a path-extendable 6-$L(2,1)$-labeling.
\end{lemma}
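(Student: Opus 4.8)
The plan is to extend $f$ from $H_1$ to all of $H$ by labelling the one remaining arc of the cycle, namely the interior vertices $u_4, u_5, \ldots, u_{l-1}$, with a single repeating pattern $a\,b\,c$ anchored so that it agrees with the already-fixed values $f(u_3)$ and $f(u_l)$. Set $a := f(u_3) \in \{0,2,4,6\}$ and label $u_3, u_4, \ldots, u_l$ using pattern $a\,b\,c$, so that for $0 \le j \le l-3$ the vertex $u_{3+j}$ receives $a$, $b$, or $c$ according as $j \equiv 0, 1, 2 \pmod 3$. Under this scheme $u_l$ receives $a$, $b$, or $c$ according as $l \equiv 0, 1, 2 \pmod 3$, while $u_3$ always receives $a$; hence the pattern is compatible with the prescribed value $f(u_l)$ precisely when the congruence hypothesis ``$f(u_3)=f(u_l)$ iff $l\equiv 0\pmod 3$'' holds. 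Concretely, when $l \equiv 0$ we get $f(u_l)=a$ for free, when $l \equiv 1$ we set $b := f(u_l)$, and when $l \equiv 2$ we set $c := f(u_l)$; in the latter two cases $f(u_l) \ne a$ (forced by the same hypothesis) supplies the required distinctness.

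It then remains to choose the still-free symbol(s) in $\{0,2,4,6\}\setminus\{a\}$. Once the pattern is laid down, every new adjacency joins two distinct even labels and every distance-two pair other than $\{u_2,u_4\}$ and $\{u_1,u_{l-1}\}$ sits inside $H_1$ or inside the pattern block, where it is already correct; so the only extra conditions on $f_1$ are the two \emph{seam} constraints $f(u_4)\ne f(u_2)$ and $f(u_{l-1})\ne f(u_1)$. Since $f(u_4)=b$ and $f(u_{l-1})$ equals $c$, $a$, or $b$ according to $l \bmod 3$, I would dispatch the three residues separately. When $l\equiv 1$ the two seam constraints read $f(u_l)\ne f(u_2)$ and $f(u_3)\ne f(u_1)$, which hold automatically because $\{u_2,u_l\}$ and $\{u_1,u_3\}$ are distance-two pairs already correctly labelled inside $H_1$; the remaining symbol $c$ is then free. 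When $l\equiv 0$ the two free symbols $b,c$ need only satisfy $b\ne f(u_2)$ and $c\ne f(u_1)$, which is easily arranged among the three symbols of $\{0,2,4,6\}\setminus\{a\}$. The delicate case is $l\equiv 2$, where both seam constraints fall on the single free symbol $b$, forcing $b\notin\{f(u_1),f(u_2)\}$ while $b$ ranges over only the two symbols of $\{0,2,4,6\}\setminus\{a,c\}$; here the hypothesis $\{f(u_1),f(u_2)\}\cap\{1,3,5\}\ne\emptyset$ is exactly what is needed, since then at least one of $f(u_1),f(u_2)$ is odd and blocks none of the even candidates for $b$, leaving an admissible choice.

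With $f_1$ defined and verified to be a valid $6$-$L(2,1)$-labelling of $H$, I would finally show that $f_1|_{C-\{u_1u_2\}}$ is path-extendable. Write $Q = u_2u_3\ldots u_lu_1$ for this path; its interior $u_3,\ldots,u_l$ carries the pattern $a\,b\,c$, and the only non-pattern vertices are the endpoints $u_1,u_2$. The plan is to invoke Lemma~\ref{lem01}(i): take a central $3k$-block inside $u_3,\ldots,u_l$, shaving at most two pattern vertices into the end segments when $l-2$ is not a multiple of $3$, so that $Q$ splits into the pattern block and two end segments of length at most $3$. It then suffices to check each end segment is path-extendable, and the key point is that every such segment consists of even pattern labels together with at most the one endpoint $u_1$ or $u_2$, so the adjacent pairs meeting an endpoint are of type (even, even) or (odd, even) and never the obstructive (odd, odd). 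Hence, using Lemmas~\ref{le00} and \ref{le01}, any attached path can be completed with even internal labels, the seam constraints guaranteeing that the three or four labels of each end segment are pairwise compatible. The few small $l$ for which the pattern block is too short (notably $l=4$, where $H_1=H$ already) I would settle by direct inspection of the short path $Q$.

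I expect the main obstacle to be the bookkeeping of the second step: coordinating, for each residue of $l$ modulo $3$, the simultaneous demands that the pattern match the fixed endpoints $f(u_3),f(u_l)$ and that the free symbol(s) avoid $f(u_1),f(u_2)$ at the two seams. The sharp point is the case $l\equiv 2\pmod 3$, where without the parity hypothesis both $f(u_1)$ and $f(u_2)$ could equal the two even symbols available for $b$ and the extension would fail outright; recognizing that ruling this out is the precise role of $\{f(u_1),f(u_2)\}\cap\{1,3,5\}\ne\emptyset$ is the crux of the argument.
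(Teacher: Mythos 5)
Your construction of $f_1$ as a valid $6$-$L(2,1)$-labeling of $H$ is essentially sound, but the second half of the claim --- that $f_1|_{C-\{u_1u_2\}}$ is path-extendable --- fails, and this is where the real content of the lemma lies. Your test for path-extendability of the end segments only excludes ``obstructive (odd, odd)'' adjacencies, but the genuine obstructions are different: as the paper notes after Definition \ref{defn:KC}, four consecutive vertices labeled $(0,2,4,6)$ already kill path-extendability, because a single vertex attached to the middle pair $(2,4)$ would need a label in $\{0,6\}$ yet is at distance two from both the $0$ and the $6$. Your anchored pattern produces exactly this. Concretely, take $l\equiv 2\pmod 3$ with $f(v_1)=1$, $f(u_1)=3$, $f(u_2)=0$, $f(v_2)=4$, and an extension to $H_1$ with $f(u_3)=2$, $f(u_l)=6$; all hypotheses of the lemma hold. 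Your rules force $a=2$, $c=6$, and then $b\in\{0,4\}$ with $b\ne f(u_2)=0$, so $b=4$, giving $(f(u_2),f(u_3),f(u_4),f(u_5))=(0,2,4,6)$. A triangle attached to $u_3u_4$ (a legitimate member of $\mathscr{H}$ of the path $u_2u_3\ldots u_lu_1$, since $u_3,u_4$ are internal) cannot be labeled, so $f_1|_{C-\{u_1u_2\}}$ is not path-extendable. Pairwise compatibility of the three or four labels in an end segment, which is all your seam constraints guarantee, is necessary but not sufficient.

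This is not a repairable bookkeeping slip within your framework: rigidly anchoring a single even pattern at the \emph{given} values $f(u_3)$ and $f(u_l)$ cannot avoid these bad quadruples in general. The paper's proof is a long case analysis precisely because in the troublesome configurations --- e.g.\ $(f(u_2),f(u_l))=(0,6)$ or $(6,0)$ with $f(u_1)=3$, and the $(f(u_1),f(u_2))\in\{(0,4),(2,4),\ldots\}$ subcases --- it \emph{re-assigns} $u_3$ (and sometimes $u_l$, $u_{l-1}$, $u_{l-2}$) with odd labels such as $5$ and $1$ before starting the periodic pattern, then invokes Lemmas \ref{le00} and \ref{lem01} with those short non-pattern prefixes certified path-extendable separately. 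Your proposal has no mechanism for overriding the prescribed $f(u_3)$, $f(u_l)$ and no use of odd labels on the cycle, so the case $l\equiv 2\pmod 3$ (and similar configurations for the other residues) cannot be completed as written.
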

\begin{proof}
Denote $f(u_3)=a$ and $f(u_l)=b$. Our assumption means $|\{f(u_1), f(u_2)\}\cap \{1, 3, 5\}|=1$ or $2$. Let us first consider the latter case, that is, $\{f(u_1), f(u_2)\}\subseteq \{1, 3, 5\}$. Since $\{0, 2, 4, 6\} \setminus \{a, b\}\not=\emptyset$, we can choose $c\in \{0, 2, 4, 6\} \setminus \{a, b\}$. If $l\equiv 2$ (mod 3), then we label $u_4, u_5, \ldots, u_l$ using pattern $c\,b\,a$, and label $u_{l-1}$ by $c$. In the case $l\equiv 1$ (mod 3), if $l=4$, there is nothing to prove; if $l\geq 5$, then we label $u_4, \ldots, u_{l-1}$ using pattern $b\,c\,a$.
In the case $l\equiv 0$ (mod 3), we have $f(u_3)=f(u_l)=a$ by our assumption. Choose $b, c\in \{0, 2, 4,6\} \setminus \{a\}$ such that $b\not=c$. We label $u_4, u_5$ by $b, c$ respectively, and $u_6, u_7, \ldots, u_{l-1}$ using pattern $abc$.

Assume $|\{f(u_1), f(u_2)\}\cap \{1, 3, 5\}|=1$ from now on.  By symmetry
we may assume $f(u_1)\in \{1, 3, 5\}$ and $f(u_2)\in \{0, 2, 4, 6\}$.

Consider the case $l\equiv 0$ (mod 3) first. In this case, $f(u_3)=f(u_l)=a$. Denote $b=f(u_2)$ and take $c\in \{0, 2, 4, 6\} \setminus \{a, b\}$. We label $u_4, u_5$ by $c, b$ respectively and
$u_6, \ldots, u_{l-1}$ using pattern $a\,c\,b$.

Now we consider the case $l\equiv 2$ (mod 3). Since $f(u_1)$ is  a common available neighbor label in $\{1, 3, 5\}$ for $f(u_2)$ and $f(u_l)$, $(f(u_2), f(u_l))\in \{(6, 0), (0, 6), (0, 2), (2, 0), (4, 6), (6, 4)\}$. If $(f(u_2), f(u_l))\in \{(0, 2), (2, 0), (4, 6), (6, 4)\}$, we label $u_4$ by $c\in \{0, 2, 4, 6\} \setminus \{a, b, f(u_2)\}$, and $u_5, \ldots, u_{l-1}$ using pattern $b\,a\,c$.  We are left with the case where $(f(u_2), f(u_l))=(0,6)$ or $(6, 0)$, for which $f(u_1)=3$ and $f(v_1)\in \{1, 5\}$.

Suppose $(f(u_2), f(u_l))=(0,6)$. If $f(v_2)\not=5$, then we re-assign 5 to $u_3$, assign 1 to $u_4$, and label
$u_5, u_6, \ldots, u_{l-1}$ using pattern $6\,0\,2$. Assume $f(v_2)=5$. If $f(v_1)=1$, then  we re-assign 5 to $u_l$, 2 to $u_{l-1}$, 4 to $u_{l-2}$ and label
$u_{l-3}, u_{l-4}, \ldots, u_{3}$ using pattern $0\,2\,4$; if $f(v_1)=5$, then  we re-assign 1 to $u_l$, 6 to $u_{l-1}$, 2 to $u_{l-2}$ and label
$u_{l-3}, u_{l-4}, \ldots, u_{3}$ using pattern $0\,6\,2$.

Suppose $(f(u_2), f(u_l))=(6, 0)$. If $f(v_2)\not=1$, then we
re-assign 1 to $u_3$, assign 5 to $u_{4}$, and label
$u_{5}, u_{6}, \ldots, u_{l-1}$ using pattern $0\,4\,2$. Assume $f(v_2)=1$. In this case, since $f(u_1)=3$, $f(v_1)\in \{1, 5\}$. If $f(v_1)=5$, then we re-assign 2 to $u_3$, assign 5 to $u_{4}$ and 1 to $u_{5}$, and label $u_{6}, u_{7}, \ldots, u_l$ using pattern $6\,4\,0$ when $l\geq 8$; we re-assign 2 to $u_3$ and 0 to $u_{5}$, and assign 4 to $u_{4}$ when $l=5$. If $f(v_2)=f(v_1)=1$, then we re-assign 5 to $u_l$, assign 0 to $u_{l-1}$ and 4 to $u_{l-2}$, and label $u_{l-3}, u_{l-4}, \ldots, u_3$ using pattern $6\,0\,4$.

Finally, in the case $l\equiv 1$ (mod 3), if $l=4$, there is nothing to prove; if $l\geq 5$, then we label $u_4, \ldots, u_{l-1}$ using pattern $b\, f(u_2)\, a$.

In each possibility above, by Lemmas~\ref{le00} and \ref{lem01}, we obtain a 6-$L(2, 1)$-labeling $f_1$ of $H$ with the desired property.
\end{proof}

\begin{lemma}
\label{le43-2}
If $l\equiv 0$ (mod 3) and $\{f(u_1), f(u_2)\}\cap \{1, 3, 5\}\not=\emptyset$, then $f$ can be extended to a 6-$L(2, 1)$-labeling $f_1$ of $H$ such that $f_1|_{C-\{u_1u_2\}}$ is a path-extendable 6-$L(2,1)$-labeling.
\end{lemma}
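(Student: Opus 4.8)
The plan is to extend $f$ by assigning labels to the remaining vertices $u_3,u_4,\ldots,u_l$ of $C$, producing a $6$-$L(2,1)$-labeling $f_1$ of $H$ whose restriction to the path $Q=u_2u_3\cdots u_lu_1$ (that is, $C-\{u_1u_2\}$) is path-extendable. The governing idea is to label the bulk of $u_3,\ldots,u_l$ with a single pattern $a\,b\,c$, $\{a,b,c\}\subset\{0,2,4,6\}$, so that Lemmas~\ref{le00} and \ref{lem01} supply path-extendability of $Q$, and to make only a bounded number of adjustments near the two ends $u_3$ (which sees $u_2$ and $v_2$) and $u_l$ (which sees $u_1$ and $v_1$), so that the boundary $L(2,1)$-constraints hold and the two end-segments stay path-extendable. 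Before starting I would record the symmetry of $H$ obtained by swapping $u_1\leftrightarrow u_2$ and $v_1\leftrightarrow v_2$ while reversing $C$ (so that $u_3\leftrightarrow u_l$); this lets me assume throughout that $f(u_1)\in\{1,3,5\}$, and then split into the case $f(u_2)\in\{1,3,5\}$ and the case $f(u_2)\in\{0,2,4,6\}$.

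The first step is to reduce to Lemma~\ref{le43-1} wherever possible. Since $l\equiv 0\pmod 3$ we have $l\ge 6$, and a single pattern $a\,b\,c$ placed on $u_3,\ldots,u_l$ puts $u_3$ and $u_l$ in the same phase, hence forces $f(u_3)=f(u_l)$. I would therefore first ask whether there is a common even label $a\in\{0,2,4,6\}$ that respects all adjacency and distance-two constraints at both $u_3$ and $u_l$. If such an $a$ exists, setting $f(u_3)=f(u_l)=a$ extends $f$ to a valid labeling of $H_1$ with $f(u_3)=f(u_l)\in\{0,2,4,6\}$ (using $l\ge 6$, so that $u_3$ and $u_l$ are neither adjacent nor at distance two), so the hypotheses of Lemma~\ref{le43-1} are met and that lemma finishes the argument.

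It then remains to treat the configurations in which no such common $a$ exists. Using the explicit sets $L_1=\{4,6\}$, $L_3=\{0,6\}$, $L_5=\{0,2\}$ of available even neighbour labels from Lemma~\ref{le01}, I expect these to be exactly the cases in which the even values forced at $u_3$ and at $u_l$ are disjoint; the cleanest instance is $\{f(u_1),f(u_2)\}=\{1,5\}$, for which $L_1\cap L_5=\emptyset$, together with the ``one-odd'' instances in which $f(v_1)$ or $f(v_2)$ deletes the last shared even value. In each such case I would break the deadlock by re-assigning a single end vertex ($u_l$, or symmetrically $u_3$) to a suitable \emph{odd} value --- the same device used in the hard sub-cases of Lemma~\ref{le43-1} --- so that the even pattern $a\,b\,c$ now runs over $u_3,\ldots,u_{l-1}$, a block of length $l-3\equiv 0\pmod 3$ that closes with the correct phase; I would then check, via Lemma~\ref{le01}, that the short end-segment created around the re-assigned vertex is path-extendable, so that Lemma~\ref{lem01}(i) applies to $Q$.

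I expect the main obstacle to be precisely this last step. The $\{1,5\}$-type configuration and its image under the symmetry admit no single-pattern solution, and the required odd re-assignment must be chosen compatibly with $f(v_1)$ and $f(v_2)$: since $v_1$ is adjacent to $u_1$ and $v_2$ to $u_2$, the values forbidden at the re-assigned vertex depend on these, forcing a further split according to $f(v_1)$ and $f(v_2)$ (for example, when $f(v_1)$ already occupies the only admissible odd value at $u_l$, one must instead re-assign at the $u_3$ end). Verifying that in every such sub-case an admissible odd re-assignment exists, and that the resulting end-segment is genuinely path-extendable, is where essentially all of the case analysis resides; the interior is handled uniformly by the pattern argument together with Lemmas~\ref{le00} and \ref{lem01}.
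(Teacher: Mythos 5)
Your skeleton is essentially the paper's, with the two devices in reversed priority: the paper first proves a ``Claim'' that the extension succeeds whenever one of $u_3,u_l$ can be given an odd label and the other an even one, and only falls back on the common-even-label route (via Lemma~\ref{le43-1}) in the single configuration where the Claim is unavailable; you lead with the common even label and fall back on the odd/even split. That reorganization is legitimate, and your first step is sound: when a common admissible $a\in\{0,2,4,6\}$ exists for both $u_3$ and $u_l$, Lemma~\ref{le43-1} does apply (with $f(u_3)=f(u_l)=a$ and $l\equiv 0\pmod 3$). The problem is that everything after that is a plan rather than a proof, and the deferred part is not a routine check --- it is where the entire content of the lemma lives.

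Two concrete gaps. First, your fallback can stall: there are boundary configurations with no common even label in which \emph{neither} end admits an odd label. For example, with $(f(v_1),f(u_1),f(u_2),f(v_2))=(3,1,5,3)$ one has $L_1\cap L_5=\emptyset$, the only odd candidate for $u_l$ is $3=f(v_1)$ (since $f(u_l)\ne f(u_2)=5$ and $f(u_l)\ne f(u_1)=1$), and symmetrically the only odd candidate for $u_3$ is $3=f(v_2)$. The only escape is that this labeling of $P$ is not path-extendable (a common neighbour of $u_1,u_2$ would be forced to take label $3$, which clashes with $v_1$ at distance two), so it is excluded by hypothesis --- but your argument never invokes the path-extendability of $f$ on $P$ to prune such cases, whereas the paper does so explicitly at exactly this point. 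Second, even when an admissible odd label for $u_l$ (or $u_3$) exists, the junction segment containing the adjacent odd pair $u_l,u_1$ (or $u_3,u_2$) together with the first pattern labels is not automatically a path-extendable labeling, so Lemma~\ref{lem01} does not apply for free; one must choose the odd label, the pattern $a\,b\,c$, and sometimes the direction in which the pattern is laid down \emph{jointly}, conditioned on $f(v_1)$ and $f(v_2)$. This is precisely the content of the paper's Claim, whose proof occupies two cases and roughly a dozen sub-cases, several of which re-assign $u_3$ or reverse the pattern. Until that verification is actually carried out, the lemma is not proved.
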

\begin{proof} We first prove:

\medskip
\n{\bf Claim.} If we can assign $u_3$  a label from $\{1, 3, 5\}$ and assign $u_l$ a label from $\{0, 2, 4, 6\}$ such that they have no conflict with the existing labels, then $f$ can be extended to a 6-$L(2, 1)$-labeling $f_1$ of $H$ such that $f_1|_{C-\{u_1u_2\}}$ is a path-extendable 6-$L(2,1)$-labeling.

\medskip
\n{\it Proof of the Claim.}  Assume first that $f(u_1), f(u_2)\in \{1, 3, 5\}$. By the $L(2, 1)$-condition, $f(u_3)\not=f(u_1)$. By Lemma~\ref{le01} (a), $f(u_3)$ has an available neighbor label $a$ in $\{0, 2, 4, 6\}$  which is not an available neighbor label for $f(u_1)$. It follows that $a\not= f(u_l)$.  Let $c \ne a$ be an available neighbor label for $f(u_3)$, and let $b\in\{0, 2, 4, 6\} \setminus \{f(u_l), a, c\}$. We label $u_4, u_5, \ldots, u_l$ using pattern $a\, b\, f(u_l)$.

Now we assume $|\{f(u_1), f(u_2)\}\cap \{1, 3, 5\}|=1$. Then $f(u_1)\in \{1, 3, 5\}$ or $f(u_2)\in \{1, 3, 5\}$.

\medskip
\n{\bf Case 1}:~$f(u_1)\in \{1, 3, 5\}$.
\medskip

Then $f(u_2)\in \{0,2,4,6\}$. By our assumption,  $f(u_3)\in \{1,3, 5\}$. Since $f(u_1), f(u_3)\in \{1, 3, 5\}$, $f(u_2)$ is an available neighbor label for both $f(u_1)$ and $f(u_3)$. This leads to $f(u_2)=6$ or 0. Moreover, when $f(u_2)=6$, $(f(u_1), f(u_3))\in \{(3,1), (1, 3)\}$; when $f(u_2)=0$, $(f(u_1), f(u_3))\in \{(3,5), (5, 3)\}$. If $(f(u_1), f(u_3))=(5,3)$, then by assumption, $f(u_l)\in \{0, 2, 4, 6\}$, which implies $f(u_l)=2$ and $f(v_1)\not=2$. In this case,  we label $u_4,\ldots, u_l$ using pattern $6\,0\,2$. If $(f(u_1), f(u_3))=(1,3)$, then by assumption, $f(u_l)\in \{0, 2, 4, 6\}$, which implies $f(u_l)=4$ and $f(v_1)\not=4$. Thus, we label $u_4,\ldots, u_l$ using pattern $0\,6\,4$.

Consider $(f(u_1), f(u_3))=(3,5)$. By our assumption, $f(u_l)\in \{0, 2, 4, 6\}$, which implies $f(u_l)=6$ and $f(v_1)\not=6$. If $f(v_2)\not=6$, then we re-assign 6 to $u_3$, and label $u_4, \ldots, u_l$ using pattern $2\,0\,6$. If $f(v_2)=6$, then we re-assign 4 to $u_3$, and label $u_4, \ldots, u_l$ using pattern $2\,0\,6$.

Consider $(f(u_1), f(u_3))=(3,1)$. If $f(v_2)\not=0$, then we re-assign 0 to $u_3$, and label $u_4, \ldots, u_l$ using pattern $4\,6\,0$. If $f(v_2)=0$, then we re-assign 2 to $u_3$, and label $u_4, \ldots, u_l$ using pattern $4\,6\,0$.

\medskip
\n{\bf Case 2}:~$f(u_2)\in \{1, 3, 5\}$.
\medskip

Then $f(u_1)\in \{0, 2, 4, 6\}$. If $f(u_1)$ is an available neighbor label in $\{0, 2, 4, 6\}$ for $f(u_3)$, let $b$ be the other available neighbor label in $\{0, 2, 4, 6\}$ for $f(u_3)$, that is, $b\not=f(u_1)$. We label $u_4, \ldots, u_{l}$ using pattern $f(u_1)\, a\, c$, where $a\in \{0, 2, 4,6\} \setminus \{f(u_1), b\}$ and $c\in \{0, 2, 4, 6\} \setminus \{f(u_1),a\}$. In what follows we assume that
\begin{equation}
\label{eq1}
\mbox{ $f(u_1)$ is not an available neighbor label for $f(u_3)$.}
\end{equation}
Assume $f(u_2)=1$ first. Then  $f(u_1)\in \{4, 6\}$. Assume first that $f(u_1)=6$. If $f(v_2)\not=3$, then we can re-assign 3 to $u_3$. Thus $f(u_1)$ is an available neighbor label in $\{0, 2, 4, 6\}$ for $f(u_3)$, which contradicts (\ref{eq1}). If $f(v_2)=3$, then $u_3$ is assigned 5 by assumption. In this case, $f(v_1)\not=4$ since $f$ is a path extendable labeling of $P$. Thus, we re-assign 4 to $u_3$, and label $u_4, \ldots, u_l$ using patten $6\,2\,4$.

Therefore, we may assume that $f(u_1)=4$.
Since $f(u_3)\in \{1, 3, 5\}$ and $f(u_2)=1$, we have $f(u_3)\in \{3, 5\}$. Consider $f(u_3)=3$. If $f(v_1)=0$, then label $u_4$ by 5, $u_5, \ldots, u_{l-2}$ using pattern $2\,6\,4$, and label $u_{l-1}$ and $u_{l}$ by 2, 6 respectively. If $f(v_1)\not=0$, then label $u_4, \ldots, u_{l}$ using pattern $6\,2\,0$. Consider $f(u_3)=5$. In this case, label $u_4$ by 3 and $u_5, \ldots, u_{l-2}$ using pattern $6\, a\, 4$, where $a\in \{0, 2, 4, 6\} \setminus \{6, 4, f(v_1)\}$, and label $u_{l-1}$ and $u_{l}$ by 6 and $a$ respectively.

Next we assume $f(u_2)=3$. Then $f(u_1)\in\{0, 6\}$ and we can re-assign $u_3$ a label such that $f(u_1)$ is an available neighbor label in $\{0, 2, 4, 6\}$ for both $f(u_2)$ and $f(u_3)$, which contradicts (\ref{eq1}).

Finally, we assume $f(u_2)=5$. Then $f(u_1)\in \{0, 2\}$. We first assume that $f(u_1)=0$. If $f(v_2)\not=3$, then we can re-label $u_3$ by 3. Thus $f(u_1)$ is an available neighbor label in $\{0, 2, 4, 6\}$ for $f(u_3)$, which contradicts (\ref{eq1}).  Assume $f(v_2)=3$. If $f(v_1)\not=2$, then we re-label $u_3$ by 2 and label $u_4, \ldots u_l$ using patten $0\,4\, 2$; if $f(v_1)=2$, then we re-label $u_3$ by 2 and label $u_4, \ldots u_l$ using patten $0\,6\, 4$. Thus, we assume that $f(u_1)=2$.
In the case when $f(u_3)=1$, label $u_4$ by 3 and $u_5, \ldots, u_{l-2}$ using pattern $0\, a\, 2$, where $a\in \{0, 2, 4, 6\} \setminus \{0, 2, f(v_1)\}$, and label $u_{l-1}$ and $u_{l}$ by 0 and $a$ respectively. In the case when $f(u_3)=3$, if $f(v_1)\not=0$, then label $u_4$ by 1 and $u_5, \ldots, u_{l-3}$ using pattern $4\,0\,2$, and label $u_{l-2}$ and $u_{l-1}$ by 4 and $0$ respectively; if $f(v_1)=0$, then label $u_4, \ldots, u_{l}$ using pattern $0\,4\,6$.

By Lemmas~\ref{le00} and \ref{lem01}, in each possibility above, we obtain a 6-$L(2, 1)$-labeling $f_1$ of $H$ with the desired property. This completes the proof of the claim.
$\square$

\medskip

We are now ready to prove our lemma. Assume first that $\{f(u_1), f(u_2)\} \subset \{1, 3, 5\}$. By symmetry we may assume $(f(u_1), f(u_2))\in \{(1, 3), (1, 5), (3, 5)\}$. If $\{f(v_1), f(v_2)\}\cap \{0, 2, 4, 6\}\not=\emptyset$, then one of $u_3$ and $u_l$ can be assigned a label from $\{0, 2, 4, 6\}$ and the other a label from $\{1, 3, 5\}$. By Claim 1, $f$
can be extended to a 6-$L(2, 1)$-labeling $f_1$ of $H$ such that
 $f_1|_{C-\{u_1, u_2\}}$ is an extendable 6-$L(2,1)$-labeling.
So we may assume $f(v_1), f(v_2)\in \{1, 3, 5\}$. Since $f$ is an extendable 6-$L(2, 1)$-labeling of $P$, $(f(u_1), f(u_2))\in \{(3, 5), (1, 3)\}$. Then $f(u_1)$ and $f(u_2)$ have a common available neighbor label $c$ in $\{0, 2, 4, 6\}$. Thus both $u_3$ and $u_l$ can be assigned $c$. By Lemma~\ref{le43-1}, $f$ can be extended to a 6-$L(2, 1)$-labeling $f_1$ of $H$ such that $f_1|_{C-\{u_1u_2\}}$ is an extendable 6-$L(2,1)$-labeling.

Next we assume $|\{f(u_1), f(u_2)\}\cap \{1, 3, 5\}|=1$.  By symmetry we may assume $f(u_1)\in \{1, 3, 5\}$ and $f(u_2)\in \{0, 2, 4, 6\}$. Then $u_l$ can be assigned a label from $\{1, 3, 5\}$ and $u_3$ a label from $\{0, 2, 4, 6\} \setminus \{f(u_2), f(v_2)\}$. By Claim 1, $f$
can be extended to a 6-$L(2, 1)$-labeling $f_1$ of $H$ such that
 $f_1|_{C-\{u_1u_2\}}$ is a path-extendable 6-$L(2,1)$-labeling.
\end{proof}

\medskip
The next lemma can be easily verified. It will be used in the proof of Lemma~\ref{le43-3}.

\begin{lemma}\label{le43-3-1} Let $W=w_1w_2w_3w_4$ be a path. If $(a, b)\in \{(6,0), (0, 6), (2, 6), (6, 2), (0, 4), (4, 0), (4, 2), (2, 4)\}$, then there is a path-extendable 6-$L(2,1)$-labeling $f$ of $W$ such that
\begin{itemize}
\item[\rm (i)] $f(w_1)=b$ and $f(w_4)=a$;
\item[\rm (ii)] $f(w_i)\in \{1, 3, 5\}$ for $i=2, 3$;
\item[\rm (iii)] for every $H\in {\mathscr {H}}(W)$, each vertex of $V(H) \setminus V(W)$  can be assigned a label from $[0,6] \setminus \{a, b, f(w_2), f(w_3)\}$  such that the resulting labeling is a path-extendable 6-$L(2, 1)$-labeling.
\end{itemize}
\end{lemma}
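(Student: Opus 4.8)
The plan is to treat this as a finite verification, exhibiting one explicit labeling for each of the eight ordered pairs $(a,b)$ and checking conditions (i)--(iii) directly. First I would cut the work down using two symmetries. The complementation map $x \mapsto 6-x$ carries any 6-$L(2,1)$-labeling to another one, fixes the set $\{1,3,5\}$ (so (ii) is preserved), and permutes the eight listed pairs among themselves; the path-reversal $w_1w_2w_3w_4 \mapsto w_4w_3w_2w_1$ interchanges the roles of $w_1$ and $w_4$, hence sends $(a,b)$ to $(b,a)$, and again maps the list to itself. Together these reduce the eight cases to three representatives, say $(a,b)=(6,0)$, $(2,6)$ and $(4,2)$.

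Next I would pin down the middle labels. Since $f(w_2)\in\{1,3,5\}$ must differ from $f(w_1)=b$ by at least $2$, and $f(w_3)\in\{1,3,5\}$ is constrained by $f(w_4)=a$, the admissible middle labels are essentially forced: $(6,0)$ gives the labeling $0\,5\,1\,6$, $(2,6)$ gives $6\,1\,5\,2$, and $(4,2)$ gives $2\,5\,1\,4$. A one-line check of the three edge gaps and the two distance-two pairs $(w_1,w_3)$, $(w_2,w_4)$ confirms each is a valid 6-$L(2,1)$-labeling meeting (i) and (ii). The substance lies in (iii), which I would handle using the observation that $\mathscr{H}(W)$ is tiny: viewing $W$ as the path of Definition~\ref{defn:HP} with only the two internal vertices $w_2,w_3$, the requirement $i_t<l=2$ forces $t\le 1$ and $i_1=1$, so the only nontrivial members of $\mathscr{H}(W)$ are $W$ with a single path of length $2$ or $3$ attached at $w_2w_3$. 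Thus for each representative I need only label one or two new vertices. Here the key structural fact is that the restricted palette $[0,6]\setminus\{a,b,f(w_2),f(w_3)\}$ always consists of exactly two even labels and one odd label (the unused element of $\{1,3,5\}$). For the length-$2$ attachment I would give the lone new vertex the single odd label, and for the length-$3$ attachment I would give the two new vertices the two even labels; one then checks the adjacencies to $w_2,w_3$ and the distance-two constraints to $w_1,w_4=b,a$. For instance in case $(6,0)$ the palette is $\{2,3,4\}$, the length-$2$ vertex takes $3$, and the length-$3$ vertices take $2$ and $4$.

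The main point requiring care --- and the reason the lemma is not entirely automatic --- is confirming, for the length-$3$ attachment $w_2z_1z_2w_3$, that the two even palette labels split correctly between $z_1$ (which must avoid $f(w_2)$ by $2$) and $z_2$ (which must avoid $f(w_3)$ by $2$). Since any two distinct elements of $\{0,2,4,6\}$ differ by at least $2$, the gap $|f(z_1)-f(z_2)|\ge 2$ is automatic, and in each representative the two palette evens are precisely one label low enough for the $w_2$-side and one high enough for the $w_3$-side, so the forced ordering works ($2,4$ for $(6,0)$; $4,0$ for $(2,6)$; $0,6$ for $(4,2)$). Because every extension stays inside the restricted palette and $\mathscr{H}(W)$ has been exhausted, each resulting labeling is path-extendable, giving (iii). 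The entire argument is a three-row (up to symmetry, eight-row) table, which is why the lemma may be stated as easily verified.
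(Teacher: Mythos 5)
Your proposal is correct; note that the paper offers no proof of this lemma at all (it is dismissed with ``The next lemma can be easily verified''), so your symmetry-reduced finite check is precisely the omitted verification. The reduction to the three representatives $(6,0)$, $(2,6)$, $(4,2)$ via $x\mapsto 6-x$ and path reversal is sound, the three explicit labelings $0\,5\,1\,6$, $6\,1\,5\,2$, $2\,5\,1\,4$ satisfy (i)--(iii), and your observation that $\mathscr{H}(W)$ contains only the single attachment at $w_2w_3$ (so the palette's lone odd label serves the length-$2$ case and its two even labels the length-$3$ case) is exactly right. The only quibble is your claim that the middle labels are ``essentially forced'' --- for $(a,b)=(6,0)$, for instance, $(f(w_2),f(w_3))$ could also be $(5,3)$ or $(3,1)$ --- but since you exhibit and verify one valid choice in each case, this does not affect the argument.
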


\begin{lemma} \label{le43-3}
If $|\{f(v_1), f(v_2),  f(u_1), f(u_2)\}\cap \{1, 3, 5\}|\leq 1$, then $f$ can be extended to a 6-$L(2, 1)$-labeling $f_1$ of $H$ such that $f_1|_{C-\{u_1u_2\}}$ is a path-extendable 6-$L(2,1)$-labeling.
\end{lemma}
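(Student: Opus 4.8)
The plan is to extend $f$ to $u_3,\ldots,u_l$ so that the path $u_2u_3\ldots u_lu_1$ (which is $C-\{u_1u_2\}$) carries, in the sense of Lemmas~\ref{le00} and \ref{lem01}, an even period-$3$ pattern on its interior together with path-extendable end-pieces of length at most $3$. Two symmetries reduce the work: the reflection of $H$ sending $u_1\leftrightarrow u_2$, $v_1\leftrightarrow v_2$ and $u_i\mapsto u_{l+3-i}$ for $i\ge 3$, and the value-complement $f\mapsto 6-f$, which fixes the partition of $[0,6]$ into evens $\{0,2,4,6\}$ and odds $\{1,3,5\}$. Using the reflection I may assume the at-most-one odd label, if present, does not lie on $v_1$; in particular $f(v_1)\in\{0,2,4,6\}$.

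First I would dispose of the case in which one of $u_1,u_2$ carries the odd label, say $f(u_1)\in\{1,3,5\}$ (so $f(u_2),f(v_1),f(v_2)$ are even). Here I aim to reduce to Lemmas~\ref{le43-1} and \ref{le43-2}. If $l\equiv 0\pmod 3$, Lemma~\ref{le43-2} applies directly. If $l\not\equiv 0\pmod 3$, I would assign $u_3$ and $u_l$ distinct even labels---$u_3$ avoiding only $f(u_2),f(v_2)$, and $u_l$ chosen among the two even labels available against the odd $f(u_1)$ guaranteed by Lemma~\ref{le01}, avoiding $f(u_2),f(v_1)$---and then invoke Lemma~\ref{le43-1}. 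The only configurations where $u_l$ cannot be made even (when the two even neighbors of $f(u_1)$ are exactly $\{f(u_2),f(v_1)\}$) are absorbed into the construction below, with $u_l$ taken odd as the first vertex of a bridge.

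The heart of the argument is the case $f(u_1),f(u_2)\in\{0,2,4,6\}$, where Lemmas~\ref{le43-1}/\ref{le43-2} do not apply. A pure even period-$3$ pattern on the whole path $u_2\ldots u_1$ is forced to satisfy $\mathrm{pattern}[0]=f(u_2)$ and $\mathrm{pattern}[(l-1)\bmod 3]=f(u_1)$; this is consistent only for $l\equiv 0\pmod 3$ (giving $\mathrm{pattern}=(f(u_2),c,f(u_1))$), whereas for $l\equiv 1$ it would force $f(u_1)=f(u_2)$ and for $l\equiv 2$ it would force $u_3=f(u_1)$, both impossible. To handle every residue uniformly I would install a Lemma~\ref{le43-3-1} bridge at the $u_1$-end---labelling $u_l,u_{l-1}$ odd and $u_{l-2}$ with an even value $a$ such that $(a,f(u_1))$ is an admissible pair---and then run an even pattern anchored at $f(u_2)$ on $u_2\ldots u_{l-2}$, matching $u_{l-2}=a$. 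The bridge decouples the fixed label $f(u_1)$ from the phase of the pattern, and since $f(v_1)$ is even while the bridge vertex at $u_1$ is odd, $v_1$ causes no conflict; property~(iii) of Lemma~\ref{le43-3-1} keeps the end-piece path-extendable, so that $f_1|_{C-\{u_1u_2\}}$ is path-extendable by Lemma~\ref{lem01}. Because $a$ may be chosen freely whenever $u_{l-2}$ falls at pattern-position $1$ or $2$ (that is, for $l\equiv 2$ and $l\equiv 0$), the construction goes through there, and it also resolves the $l\equiv 0$ sub-case in which all four even labels are occupied, since removing $u_l$ from the even pattern drops the offending endpoint constraint.

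The main obstacle I anticipate is the residue $l\equiv 1\pmod 3$, where $u_{l-2}$ sits at pattern-position $0$, forcing $a=f(u_2)$ and hence requiring the pair $(f(u_2),f(u_1))$ to be admissible. This fails exactly when $\{f(u_1),f(u_2)\}\in\{\{0,2\},\{4,6\}\}$, and the genuinely delicate situation is the coincidence of this with $l\equiv 1$ and with $f(v_2)$ being the odd label (which also obstructs simply moving the bridge to the $u_2$-end). These few configurations, along with the smallest cycles $l=4,5$ where the even segment degenerates, will need separate explicit labelings; verifying that each admits a valid, path-extendable extension is the crux of the proof, and is where the admissible-pair list and property~(iii) of Lemma~\ref{le43-3-1}, refined by the complement symmetry $f\mapsto 6-f$, must be used carefully.
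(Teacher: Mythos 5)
Your overall strategy is the same as the paper's (period-three even patterns glued to Lemma~\ref{le43-3-1} bridges, with residual configurations handled by hand), but two of the reductions you assert do not hold, and they hide most of the actual work. First, in the case where $f(u_1)$ is odd and its two available even neighbour labels are exactly $\{f(u_2),f(v_1)\}$, you claim the configuration is ``absorbed into the construction below, with $u_l$ taken odd as the first vertex of a bridge.'' But Lemma~\ref{le43-3-1} only produces bridges whose two endpoints $w_1,w_4$ carry \emph{even} labels from its admissible list; with $f(u_1)\in\{1,3,5\}$ the pair $(a,f(u_1))$ is never admissible, so your bridge device simply does not apply there. These configurations need their own construction (the paper's Case~1 assigns $u_l$ an odd label chosen so that $f(v_2)$, respectively some $a\notin\{f(u_2),f(v_2)\}$, is an available neighbour label for it, and then runs an explicit pattern ending at $u_{l-1}$).

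Second, your claim that for $l\equiv 2\pmod 3$ ``$a$ may be chosen freely \ldots the construction goes through'' is false. For $l\equiv 2$ the bridge endpoint $u_{l-2}$ occupies the same pattern position as $u_3$, so $a$ must avoid $f(u_2)$ and $f(v_2)$ while also lying in the two-element admissible set for $f(u_1)$. The labeling $(f(v_1),f(u_1),f(u_2),f(v_2))=(2,0,6,4)$ is path-extendable, yet it blocks both admissible choices $a\in\{4,6\}$ at the $u_1$-end, and the reflected bridge at the $u_2$-end is blocked the same way since $A(6)=\{0,2\}=\{f(u_1),f(v_1)\}$. A similar over-constraint occurs for $l\equiv 0$ when all four even labels appear among $f(v_1),f(u_1),f(u_2),f(v_2)$, because the third pattern value $c$ must additionally avoid $f(u_1)$ (it lands on $u_3$, at distance two from $u_1$). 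So the set of configurations requiring separate explicit labelings is far larger than the $l\equiv 1$, $\{f(u_1),f(u_2)\}\in\{\{0,2\},\{4,6\}\}$ family you isolate --- it is essentially the long enumeration (including Table~1) that constitutes the paper's proof, and you have deferred exactly that part. As written, the proposal is a plan whose uniform constructions provably fail on concrete admissible inputs, so the lemma is not yet established.
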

\begin{proof}
We distinguish the following two cases.

\medskip

\n{\bf Case 1.} $|\{f(u_1), f(u_2)\}\cap \{1, 3, 5\}|=1$.

\medskip

We may assume $l\equiv 1$ or 2 (mod 3) for otherwise the result is true by Lemma~\ref{le43-2}. By symmetry we may assume $f(u_1)\in \{1, 3, 5\}$. Then $f(v_1), f(v_2), f(u_2)\in \{0, 2, 4, 6\}$.
Consider the case $l\equiv 2$ (mod 3) first.  Since $f(v_1), f(u_2)\in \{0, 2, 4, 6\}$, $f(v_1)$ and $f(u_2)$ are two available neighbor labels for $f(u_1)$. By Lemma~\ref{le01}, there is a label in $a\in \{1, 3, 5\} \setminus \{f(u_1)\}$ such that $f(v_2)$ is an available neighbor label for $a$. Assign $a$ to $u_l$  and $f(v_2)$ to $u_{l-1}$. Choose $b\in \{0, 2, 4, 6\} \setminus \{f(v_2), f(u_2)\}$. Assign $b$ to $u_3$ and label $u_{4}, \ldots, u_{l-2}$ using pattern $f(v_{2})\,f(u_{2})\,b$.

Next assume $l\equiv 1$ (mod 3). The vertex $u_l$ can be assigned a label
$f(u_l)\in \{1, 3, 5\} \setminus \{f(u_1)\}$ such that $f(u_l)$ has an available neighbor label
$a\in \{0, 2, 4, 6\} \setminus \{f(u_2), f(v_2)\}$. Assign $a$ to $u_{l-1}$ and $u_{l-4}$, $f(u_2)$ to $u_{l-2}$, $f(v_2)$ to $u_{l-3}$, and label $u_{l-5}, \ldots, u_3$ using pattern $f(u_{2})\, f(v_{2})\, a$.

\medskip
\n{\bf Case 2.} $\{f(u_1), f(u_2)\}\cap \{1, 3, 5\}=\emptyset$.
\medskip

Since $|\{f(v_1), f(v_2), f(u_1), f(u_2)\}\cap \{1, 3, 5\}|\leq 1$, $|\{f(v_1), f(v_2)\}\cap \{1, 3, 5\}|\leq 1$. We will consider three subcases: $l\equiv 0, 1, 2$ (mod 3). In the case when $l\equiv 0$, we will consider two subcases: $|\{f(v_1), f(v_2)\}\cap \{1, 3, 5\}|=0$ or 1. In the case when $l\not\equiv 0$, we do not consider any subcase explicitly. In the case when $l\equiv 0$ (mod 3) and $|\{f(v_1),  f(v_2)\}\cap \{1, 3, 5\}|=1$,  we may assume without loss of generality that $f(v_1)\in \{1, 3, 5\}$ and $f(u_1), f(u_2), f(v_2)\in \{0, 2, 4, 6\}$.

Consider the case $l\equiv 0$ (mod 3). If $|\{f(v_1), f(v_2)\}\cap \{1, 3, 5\}|=1$, choose $a\in \{0, 2, 4, 6\} \setminus \{f(u_1), f(u_2), f(v_2)\}$. Then assign $a$ to $u_3$ and label $u_4, u_5, \ldots, u_l$ using pattern $f(u_1)\, f(u_2)\, a$.

If $|\{f(v_1), f(v_2)\}\cap \{1, 3, 5\}|=0$,  by symmetry we may assume $f(u_1)<f(u_2)$. If $(f(u_1), f(u_2))\in \{(0, 6), (0, 2)\}$, then  $f(u_1)$ and $f(u_2)$ have a common available neighbor label $a\in \{1, 3, 5\}$ such that $u_3$ can be assigned $a$, and $u_4, u_5, \ldots, u_l$ can be labeled using pattern $f(u_1)\ f(u_2)\ f(v_2)$. We now assume that $(f(u_1), f(u_2))\in \{(4, 6), (2, 6)\}$. If $f(v_1)\not=f(v_2)$, then we assign $u_3$ by 3, $u_4$ by 1, $u_5$ by 6, label $u_6, \ldots, u_{l-1}$ using patten $f(v_2)\, f(u_1)\, 6$ and assign $u_l$ by $f(v_2)$. If $f(v_1)=f(v_2)$, let $a\in \{0, 2, 4, 6\}\setminus \{f(u_1), f(u_2), f(v_1)\}$. We assign $u_3$ by 3, $u_4$ by 1, $u_5$ by 6, label $u_6, \ldots, u_{l-1}$ using patten $a\, f(u_1)\, 6$ and assign $u_l$ by $a$.
 It remains to consider the case $(f(u_1), f(u_2))\in \{(0, 4),  (2, 4)\}$. In this case, since $f$ is an extendable $L(2, 1)$-labeling of $P$, we have $f(v_1)=f(v_2)$. In the case $(f(u_1), f(u_2))=(0, 4)$, if $f(v_1)=f(v_2)=6$,  then  label $u_3, u_4$ by 1, 3 respectively, label $u_5,u_6, \ldots, u_{l-3}$ using pattern $6\,2\,0$, and label $u_{l-2}, u_{l-1}$ by 6 and 2, respectively; if $f(v_1)=f(v_2)=2$,  then  label $u_3, u_4$ by 1, 5 respectively, label $u_5,u_6, \ldots, u_{l-3}$ using pattern $2\,6\,0$, and label $u_{l-2}, u_{l-1}$ by 2 and 6, respectively.
In the case $(f(u_1), f(u_2))=(2, 4)$, if $f(v_1)=f(v_2)=6$, then label $u_3, u_4$ by 0, 3 respectively,  label $u_5,u_6, \ldots, u_{l-3}$ using pattern $6\,0\,2$, and label $u_{l-2}, u_{l-1}$ by 6 and 0, respectively; if $f(v_1)=f(v_2)=0$, then label $u_3, u_4$ by 1, 5 respectively,  label $u_5,u_6, \ldots, u_{l-3}$ using pattern $0\,6\,2$, and label $u_{l-2}, u_{l-1}$ by 0 and 6, respectively.

Consider the case $l\equiv 1$ (mod 3). If $(f(u_1), f(u_2)) \in \{(6,0), (0, 6), (2, 6), (6, 2), (0, 4), (4, 0), (4, 2), (2, 4)\}$, then by Lemma~\ref{le43-3-1} we have an extendable 6-$L(2, 1)$-labeling of path $u_2u_3u_4u_5$ such that $f(u_5)=f(u_1)$. If $f(v_1)\not=f(v_2)$, then label $u_6, \ldots, u_{l-2}$ using pattern $f(u_2)\, f(v_2)\, f(u_1)$, and label $u_{l-1}$ and $u_{l}$ by $f(u_2)$ and $f(v_2)$, respectively. If $f(v_1)=f(v_2)$, let $a\in \{0, 2, 4, 6\} \setminus \{f(v_2), f(u_1), f(u_2)\}$. In this case, label $u_6, \ldots, u_{l-2}$ using pattern $f(u_2)\, a\, f(u_1)$, and $u_{l-1}$ and $u_{l}$ by $f(u_2)$ and $a$, respectively. Thus we assume $(f(u_1), f(u_2))\in \{(0, 2), (2, 0), (6, 4), (4, 6)\}$. By Lemma~\ref{le43-3-1}, we have an extendable 6-$L(2, 1)$-labeling of path $u_2u_3u_4u_5$ such that $f(u_5)=f(u_1)$.    If $f(v_1)=f(v_2)$, let $a\in \{0, 2, 4, 6\} \setminus \{f(u_1), f(u_2), f(v_2)\}$. In this case,  label $u_l, \ldots, u_{8}$ using pattern $a\, f(u_2)\, f(u_1)$, and label $u_7, u_6$ by $a, f(u_2)$, respectively. If $f(v_1)\not=f(v_2)$, then label $u_l, \ldots, u_{8}$ using pattern $f(v_2)\, f(u_2)\, f(u_1)$, and label $u_{7}, u_6$ by $f(v_2)$ and $f(u_2)$, respectively.

Finally, we consider the case $l\equiv 2$ (mod 3). Suppose that $(f(v_2), f(u_2))\in \{(6, 0), (6, 2), (0, 4), (0, 6)\}$. If $f(v_1)\not=f(v_2)$, then by Lemma~\ref{le43-3-1} we have an extendable 6-$L(2, 1)$-labeling of path $u_2u_3u_4u_5$ such that $f(u_5)=f(v_2)$, and $u_6, \ldots, u_{l}$ can be labeled using pattern $f(u_1)\, f(u_2)\, f(v_2)$. If $f(v_1)=f(v_2)$, let $a\in \{0, 2, 4, 6\} \setminus \{f(u_1), f(u_2), f(v_2)\}$. Then, by Lemma~\ref{le43-3-1}, we have an extendable 6-$L(2, 1)$-labeling of path $u_2u_3u_4u_5$ such that $f(u_5)=a$, and $u_6, \ldots, u_{l}$ can be labeled using pattern $f(u_1)\, f(u_2)\, a$.

Suppose then that $(f(v_2), f(u_2))\in \{(4, 6), (2, 0)\}$. If $f(v_1)\not=f(v_2)$,  then by Lemma~\ref{le43-3-1} we have an extendable 6-$L(2, 1)$-labeling of path $u_2u_3u_4u_5$ such that $f(u_5)=f(v_2)$, and $u_6, \ldots, u_{l}$ can be labeled using pattern $f(u_1)\, f(u_2)\, f(v_2)$. Assume $f(v_1)=f(v_2)$. Let $a\in \{0, 2, 4, 6\} \setminus \{f(v_2), f(u_1), f(u_2)\}$. If $(f(v_2), f(u_2), f(u_1))\in \{(4, 6, 2), (2,0,4)\}$, then we have an extendable 6-$L(2, 1)$-labeling of path $u_2u_3u_4u_5$ such that $f(u_5)=a$ by Lemma~\ref{le43-3-1}, and $u_6, \ldots, u_{l}$ can be labeled using pattern $f(u_1)\, f(u_2)\, a$. If $(f(v_2), f(u_2), f(u_1))\in \{(4, 6, 0), (2, 0, 6)\}$, then by Lemma~\ref{le43-3-1} we have an extendable 6-$L(2, 1)$-labeling of path $u_2u_3u_4u_5$ such that $f(u_5)=f(v_2)$, and $u_6, \ldots, u_{l}$ can be labeled using pattern $f(u_1)\,f(u_2)\,a$.

\begin{table}[ht]
\caption{Partial labeling in the case when $l\equiv 2$ (mod 3)}
\centering
\begin{tabular}{c|c}
\hline
$(f(u_1), f(u_2), f(v_1),f(v_2))$ & $(f(u_3), f(u_4), f(u_5), f(u_6), \ldots, f(u_l))$\\
\hline
(4, 6, *, 2)& 1, 5, 2, 4, 6, 2, \ldots, 4, 6, 2\\
(0, 6, *, 2)& 1, 5, 2, 0, 6, 2, \ldots, 0, 6, 2\\
(6, 0, *, 4)& 3, 1, 4, 6, 0, 4, \ldots, 6, 0, 4\\
(2, 0, *, 4)& 3, 1, 4, 2, 0, 4, \ldots, 2, 0, 4\\
(0, 2, *, 4)& 5, 1, 4, 0, 2, 4, \ldots, 0, 2, 4\\
(0, 4, *, 2)& 1, 6, 2, 0, 4, 2, \ldots, 0, 4, 2\\
(6, 4, *, 2)& 1, 5, 2, 6, 4, 2, \ldots, 6, 4, 2\\
(0, 4, *, 6)& 1, 3, 6, 0, 4, 6, \ldots, 0, 4, 6\\
(2, 4, *, 6)& 1, 3, 6, 2, 4, 6, \ldots, 2, 4, 6\\
(4, 2, *, 0)& 5, 3, 0, 4, 2, 0, \ldots, 4, 2, 0\\
(6, 2, *, 0)& 5, 3, 0, 4, 2, 0, \ldots, 4, 2, 0\\
\hline
\end{tabular}
\end{table}

In the remaining case where $(f(v_2), f(u_2))\in \{(2, 6), (4, 0), (2, 4), (4, 2), (6, 4), (0, 2)\}$, we give a 6-$L(2, 1)$-labeling in Table 1 when $f(v_1)\not=f(v_2)$ with one exception that $(f(u_1), f(u_2), f(v_2))=(6, 2, 4)$. In this exceptional case, if $f(v_1)\not=1$, then we label $u_3, u_4, \ldots, u_{l-3}$ using patten $0\, 6\, 2$, and $u_{l-2}, u_{l-1}, u_l$ are labeled by 0, 3, 5, respectively; if $f(v_1)=1$, then we label $u_l, u_{l-1},\ldots, u_6$ using patten $4\,2\,6$, and $u_5, u_4, u_3$ are labeled by 4, 0, 5, respectively. In Table 1, the labels in the first column are the given labels of $u_1,u_2,v_1,v_2$, where $*$ stands for a label from either $\{0, 2, 4, 6\}$ or  $\{1, 3, 5\}$ as $|\{f(u_1, f(u_2), f(v_1), f(v_2)\} \cap \{1, 3, 5\}|\leq 1$. In the second column of Table 1, the first three labels are assigned to $u_3, u_4$ and $u_5$, respectively, and the rest labels are assigned to $u_6, \ldots, u_l$ using the shown pattern. It remains to consider $f(v_1)=f(v_2)$. If $(f(v_2), f(u_2))\in \{(2, 6), (4, 0)\}$,  let $a\in \{0, 2, 4, 6\} \setminus \{f(u_1), f(u_2), f(v_2)\}$. By Lemma~\ref{le43-3-1} we have an extendable 6-$L(2, 1)$-labeling of path $u_2u_3u_4u_5$ such that $f(u_5)=a$ and $u_6, \ldots, u_{l}$ can be labeled using pattern $f(u_1)\, f(u_2)\, a$. If $(f(v_2), f(u_2))\in \{(2, 4), (4, 2), (6, 4), (0, 2)\}$, let $a\in \{0, 2, 4, 6\} \setminus \{f(u_1), f(u_2), f(v_2)\}$. By Lemma~\ref{le43-3-1} we have an extendable 6-$L(2, 1)$-labeling of path $u_{1}u_{l}u_{l-1}u_{l-2}$ such that $f(u_{l-2})=a$ and $u_{l-3}, \ldots, u_3$ can be labeled using pattern $f(u_2)\, f(u_1)\, a$.

In each possibility above, by Lemmas~\ref{le00} and \ref{lem01}, we obtain a 6-$L(2, 1)$-labeling $f_1$ of $H$ with the desired property.
\end{proof}

\begin{thm}
\label{le43}
Any path-extendable 6-$L(2, 1)$-labeling $f$ of $P$ can be extended to a 6-$L(2, 1)$-labeling $f_1$ of $H$ such that
 $f_1|_{C-\{u_1u_2\}}$ is a path-extendable 6-$L(2,1)$-labeling of the path.
\end{thm}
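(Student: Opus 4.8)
The plan is to dispatch the statement to the three preceding lemmas through a case analysis on how many of $f(u_1), f(u_2)$ lie in $\{1,3,5\}$, and then to settle by hand the single configuration that none of the lemmas reaches. First I would suppose that at least one of $f(u_1), f(u_2)$ lies in $\{1,3,5\}$. When $l \equiv 0 \pmod 3$ the conclusion is immediate from Lemma~\ref{le43-2}. When $l \not\equiv 0 \pmod 3$, I would instead verify the hypotheses of Lemma~\ref{le43-1}: that $f$ extends to a labeling of $H_1$ with $f(u_3), f(u_l) \in \{0,2,4,6\}$ and $f(u_3) \neq f(u_l)$ (the parity condition of that lemma for $l \not\equiv 0 \pmod 3$). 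In $H_1$ the vertex $u_3$ meets only $u_2$, with distance-two constraints to $u_1$ and $v_2$, while $u_l$ meets only $u_1$, with distance-two constraints to $u_2$ and $v_1$; so by Lemma~\ref{le01} each of $u_3, u_l$ has enough available even labels, and the counting in Lemma~\ref{le01}(i) leaves room to make the two choices distinct. (For $l = 4$ the edge $u_3u_l = u_3u_4$ forces distinctness automatically.)

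The one subtlety here is that the even choices for $u_3, u_l$ could a priori be exhausted by the forbidden distance-two values; this is exactly where I would use that $f$ is path-extendable on $P$. Indeed, the tight configurations in which, say, both neighbours $u_2$ and $v_1$ of an odd $u_1$ use up both members of the available even set for $f(u_1)$ are precisely the forbidden $4$-patterns singled out after Definition~\ref{defn:KC} (for instance $(f(v_1),f(u_1),f(u_2),f(v_2)) = (4,1,6,3)$), and these are ruled out by path-extendability of $f$. Hence Lemma~\ref{le43-1} always applies in this branch. Next I would assume $f(u_1), f(u_2) \in \{0,2,4,6\}$. If at most one of $f(v_1), f(v_2)$ lies in $\{1,3,5\}$, then $|\{f(v_1),f(v_2),f(u_1),f(u_2)\} \cap \{1,3,5\}| \leq 1$ and Lemma~\ref{le43-3} finishes the proof. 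This leaves a single residual configuration: both of $f(u_1), f(u_2)$ even and both of $f(v_1), f(v_2)$ odd.

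For this residual case I would argue directly, in the spirit of Case~2 of Lemma~\ref{le43-3}. The helpful feature is that, since $f(v_1), f(v_2) \in \{1,3,5\}$, they impose no distance-two restriction on the even labels to be placed on $u_3, \ldots, u_l$, so the only values forbidden at $u_3$ and at $u_l$ are $f(u_1)$ and $f(u_2)$. I would choose even labels for $u_3$ and $u_l$ realizing $f(u_3) = f(u_l)$ if and only if $l \equiv 0 \pmod 3$, fill $u_4, \ldots, u_{l-1}$ with a repeating even pattern $a\,b\,c$, and, when a purely even fit is obstructed on short cycles, insert an odd-middle transition segment supplied by Lemma~\ref{le43-3-1} to bridge a fixed even end-label to the pattern; Lemma~\ref{lem01}(i) then certifies that $f_1|_{C-\{u_1u_2\}}$ is path-extendable. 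I expect this residual configuration to be the main obstacle, since it is covered by none of the lemmas and an all-even assignment can genuinely get stuck (e.g. for $l \equiv 2 \pmod 3$ with $(f(u_1),f(u_2)) = (0,2)$), forcing the odd-label repairs; by contrast, the reductions to Lemmas~\ref{le43-1}, \ref{le43-2} and~\ref{le43-3} amount to bookkeeping once the forbidden-pattern exclusions of the previous paragraph are in place.
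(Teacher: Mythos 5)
Your overall architecture matches the paper's: peel off cases with the three lemmas and handle the residual ``$f(u_1),f(u_2)$ even, $f(v_1),f(v_2)$ odd'' configuration by hand. But there is a genuine gap in your first branch. When exactly one of $f(u_1),f(u_2)$ is odd and \emph{all three} of the remaining labels are even, the reduction to Lemma~\ref{le43-1} can fail outright: the hypothesis of that lemma requires extending $f$ to $H_1$ with $f(u_3),f(u_l)\in\{0,2,4,6\}$, and such an extension need not exist. Concretely, take $(f(v_1),f(u_1),f(u_2),f(v_2))=(4,1,6,0)$. This \emph{is} a path-extendable $6$-$L(2,1)$-labeling of $P$ (a vertex adjacent to both $u_1,u_2$ can take $3$; a pair of adjacent vertices attached to $u_1,u_2$ can take $5,2$), yet $u_l$ is adjacent to $u_1$ and at distance two from $v_1$ and $u_2$, so its only candidate even labels are $L_1=\{4,6\}$, both of which are forbidden. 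Your claim that ``the tight configurations \ldots are precisely the forbidden $4$-patterns'' is true when \emph{both} $f(u_1),f(u_2)$ are odd (there the blocked configurations are exactly $(4,1,3,0)$, $(2,5,3,6)$ and their reverses, which are indeed non-path-extendable), but it is false when only one of them is odd.

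This is precisely why the paper's Lemma~\ref{le43-3} is stated in terms of $|\{f(v_1),f(v_2),f(u_1),f(u_2)\}\cap\{1,3,5\}|\leq 1$ rather than $|\{f(u_1),f(u_2)\}\cap\{1,3,5\}|=0$: the configuration $(4,1,6,0)$ falls under Case~1 of that lemma, where the construction deliberately places an \emph{odd} label on $u_l$ (followed by $f(v_2)$ on $u_{l-1}$, etc.) instead of insisting on even labels at both $u_3$ and $u_l$. Your case split invokes Lemma~\ref{le43-3} only when both $f(u_1),f(u_2)$ are even, so these configurations are routed to an argument that cannot succeed. The fix is to apply Lemma~\ref{le43-3} first, exactly as stated (whenever at most one of the four labels is odd), and only then run your availability count for even labels at $u_3,u_l$; in the surviving cases at least two of the four labels are odd, the odd $v_i$'s no longer block the even choices, and the reduction to Lemmas~\ref{le43-1} and~\ref{le43-2} goes through essentially as you describe. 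Your treatment of the residual all-even-$u$/all-odd-$v$ case is only a sketch, but it is consistent in outline with the paper's Case~3.
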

\begin{proof}
By Lemma \ref{le43-3}, we may assume $|\{f(v_1), f(v_2), f(u_1), f(u_2)\}\cap \{1, 3, 5\}|\geq 2$. Assume first $|\{f(v_1), f(v_2), f(u_1), f(u_2)\}\cap \{1, 3, 5\}|=3$, so that $\{f(u_1), f(u_2)\}\cap \{1, 3, 5\}\not=\emptyset$. By Lemma \ref{le43-2}, $l\equiv 1$ or 2 (mod 3). In each case we can label $u_3, u_l$ by distinct $a, b\in\{0, 2, 4, 6\}$ respectively.  By Lemma~\ref{le43-1}, $f$ can be extended to a 6-$L(2, 1)$-labeling $f_1$
of $H$ such that
 $f_1|_{C-\{u_1u_2\}}$ is an extendable 6-$L(2,1)$-labeling. It remains to consider the case
$|\{f(v_1), f(v_2), f(u_1), f(u_2)\}\cap \{1, 3, 5\}|=2$.
We distinguish the following cases.

\medskip

\n{\bf Case 1.} $|\{f(u_1), f(u_2)\}\cap \{1, 3, 5\}|=2$.

\medskip

By Lemma \ref{le43-2}, $l\equiv 1$ or 2 (mod 3). We have $f(v_1), f(v_2)\in \{0, 2, 4, 6\}$. Note that each of $f(u_1)$ and $f(u_2)$ has two available neighbor labels in $\{0, 2, 4, 6\}$. Thus both $u_3$ and $u_l$ can be assigned labels in $\{0, 2, 4, 6\}$. We claim that $u_3$ and $u_l$ can be assigned labels in $\{0, 2, 4, 6\}$ such that $f(u_3)\not=f(u_l)$. Suppose otherwise. Then $f(u_1)$ and $f(u_2)$ have a common available neighbor label in $\{0, 2, 4, 6\}$. This implies that  $(f(u_1), f(u_2)) \in \{(3, 5), (1, 3)\}$ and $f(v_1)\not=f(v_2)$. If $(f(u_1), f(u_2))=(3,5)$, then $f(v_1)=6$ and $f(v_2)=2$, which contradicts the assumption that $f$ is an extendable $L(2,1)$-labeling of the path $v_1u_1u_2v_2$, because $(2, 5, 3, 6)$ is not an extendable $L(2,1)$-labeling of this path. Similarly, if $(f(u_1), f(u_2))=(1,3)$, then $f(v_1)=4$ and $f(v_2)=0$, which contradicts the assumption that $f$ is an extendable $L(2,1)$-labeling of the path $v_1u_1u_2v_2$. Thus $u_3$ and $u_l$ can be assigned labels in $\{0, 2, 4, 6\}$ such that $f(u_3)\not=f(u_l)$. By Lemma~\ref{le43-1}, $f$ can be extended to a 6-$L(2, 1)$-labeling $f_1$ of $H$ such that
 $f_1|_{C-\{u_1u_2\}}$ is an extendable 6-$L(2,1)$-labeling.

\medskip
\n{\bf Case 2.} $|\{f(u_1), f(u_2)\}\cap \{1, 3, 5\}|=1$.
\medskip

By Lemma \ref{le43-2}, $l\equiv 1$ or 2 (mod 3). By symmetry, we may assume $f(u_1)\in \{1, 3, 5\}$. Since
$|\{f(u_1), f(u_2), f(v_1), f(v_2)\}\cap \{1, 3, 5\}|=2$, we have $|\{f(v_1), f(v_2)\} \cap \{1, 3, 5\}|=1$.  We claim that $f(v_1)\in \{1, 3, 5\}$. Suppose otherwise. Then $f(v_2)\in \{1,3, 5\}$. In the case $f(u_1)=1$, we have $f(u_2)=6$, $f(v_2)=3$ and $f(v_1)=4$, which implies that $f$ is not an extendable 6-$L(2, 1)$-labeling of $P$, a contradiction. Thus $f(v_1)\in \{1, 3, 5\}$ and $f(v_2)\in \{0, 2, 4, 6\}$. The cases where $f(u_1)\in \{3, 5\}$ can be dealt with similarly. Since $f(u_1)$ has two available neighbor labels in $\{0, 2, 4, 6\}$, namely $f(u_2)$ and $a$, $u_l$ can be assigned $a$, and $u_3$ can be assigned $b\in \{0, 2, 4, 6\} \setminus \{f(u_2, f(v_2), a\}$. By Lemma~\ref{le43-1}, $f$ can be extended to a 6-$L(2, 1)$-labeling $f_1$ of $H$ such that $f_1|_{C-\{u_1u_2\}}$ is an extendable 6-$L(2,1)$-labeling.

\medskip
\n{\bf Case 3.} $\{f(u_1), f(u_2)\}\cap \{1, 3, 5\}=\emptyset$.
\medskip

In this case, $f(v_1), f(v_2)\in \{1, 3, 5\}$. By symmetry, we may assume $f(u_2)<f(u_1)$.  If $l\equiv 0$ (mod 3), let $a\in \{0, 2, 4, 6\} \setminus \{f(u_1), f(u_2)\}$ and label $u_3$ and $u_l$ by $a$. By Lemma~\ref{le43-1},  $f$ can be extended to a 6-$L(2, 1)$-labeling $f_1$ of $H$ such that $f_1|_{C-\{u_1u_2\}}$ is an extendable 6-$L(2,1)$-labeling. Thus, $l\equiv 1$ or 2 (mod 3).

First, we assume $l\equiv 1$ (mod 3).   If $(f(u_1), f(u_2))\in \{(6, 4), (6, 2), (6, 0), (4, 0)\}$, let $c\in \{0, 2, 4, 6\}\setminus \{f(u_1), f(u_2)\}$. We label $u_3$ by $c$ and we have an extendable 6-$L(2, 1)$-labeling of path $u_3u_4u_5u_6$ such that $f(u_6)=f(u_2)$.  If $l=7$, then label $u_7$ by $c$. Otherwise, we can label $u_{7}, \ldots, u_{l-1}$ using pattern $c\, f(u_1)\,f(u_2)$, and label  $u_l$ by $c$.   If $f(u_1), f(u_2))=(2, 0)$, then $f(v_1)=5$ and $f(v_2)\in \{3,5\}$. Thus $u_l$ and $u_{l-1}$ can be labeled 6 and 3, respectively, and $u_3, \ldots, u_{l-2}$ can be labeled using pattern 4 2 0. If $(f(u_1), f(u_2))=(4, 2)$, then $f(v_1)=1$ and $f(v_2)=5$. Thus $u_l$, $u_{l-1}$ and $u_{l-2}$ can be assigned 6, 3 and 1, respectively, $u_{l-3},\ldots, u_5$ can be labeled using pattern $4\, 0\, 2$, and $u_4$ and $u_3$ are assigned 4 and 0, respectively.

Next we assume $l\equiv 2$ (mod 3). By symmetry, we may assume $f(u_1)<f(u_2)$. Since $|\{f(v_1), f(v_2)\}\cap \{1, 3, 5\}|=2$, $f(v_1)\not=f(v_2)$. If $(f(u_1),f(u_2))\in \{(0, 2), (0, 4)\}$, then  let $b=6$. By Lemma~\ref{le43-3-1}, we have an extendable 6-$L(2, 1)$-labeling of path $u_1u_{l}u_{l-1}u_{l-2}$ such that $f(u_{l-2})=b$ and $f(u_l)\not= f(v_1)$,  while $u_{l-3}, u_{l-4}, \ldots, u_3$ are labeled using pattern $f(u_2)\, f(u_1)\, b$.
If $(f(u_1),f(u_2))=(0, 6)$, then $(f(v_1), f(v_2))\in \{(3, 1), (5, 1), (5, 3)\}$. Since $f$ is an extendable $L(2, 1)$-labeling of $P$, $(f(v_1), f(v_2))\not=(5, 3)$. In each case when $(f(v_1), f(v_2))=(3, 1)$ or (5, 1), we label $u_l, u_{l-1}, \ldots, u_6$ using patten $4\, 6\, 0$ and label $u_5, u_4, u_3$ by 4, 1, 3 respectively.
If $(f(u_1),f(u_2))\in \{(2, 6), (4, 6)\}$, then   let $b=0\in \{0, 2, 4\}\setminus \{f(u_1, u_2\}$. By Lemma~\ref{le43-3-1}, we have an extendable 6-$L(2, 1)$-labeling of path $u_2u_3u_3u_5$ such that $f(u_5)=0=b$ and $f(v_2)\not=f(u_3)$,  while $u_{6}, u_{7}, \ldots, u_l$ are labeled using pattern $f(u_1)\, f(u_2)\, b$.
It remains to consider the case when $(f(u_1),f(u_2))=(2,4)$. Then $f(v_1)=5$ and $f(v_2)=1$. We label $u_3, u_4, u_5$ by 0, 5, 1, respectively, and $u_6\ldots, u_l$ using pattern $4\, 0\, 6$.

In each possibility above, by Lemmas~\ref{le00} and \ref{lem01}, we obtain a 6-$L(2, 1)$-labeling $f_1$ of $H$ with the desired property.
\end{proof}

%extension technique 2
\section{Extension technique 2}
\label{sec:ext2}

\begin{notn}
\emph{Let $P=v_1v_2v_3$ be a path and $C=u_1u_2\ldots u_lu_1$ a cycle, $l \ge 3$, such that $V(P)\cap V(C)=\emptyset$. Throughout this section, $K$ is the graph obtained from $P$ and $C$ by adding the edge $u_2v_2$ between $P$ and $C$, and $f$ is a given 6-$L(2, 1)$-labeling of $P$.}
\end{notn}

\begin{lemma}
\label{le44-1} If $l\equiv 0$ (mod 3), then $f$ can be extended to  a 6-$L(2,1)$-labeling
$f_1$ of $K$ such that $f_1|_{C}$ is a cycle-extendable 6-$L(2,1)$-labeling of type 2 in $C$.
\end{lemma}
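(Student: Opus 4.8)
The plan is to reduce the statement to Lemma~\ref{lem01}(iii) taken with $t=2$. Since $l\equiv 0\pmod 3$, write $l=3k+3$; I would label all of $C$ except a three-vertex \emph{seam} $u_2,u_3,u_4$ by a period-$3$ pattern $a\,b\,c$ of even labels, keeping $f$ unchanged on $P$. Concretely, assign the \emph{core} $u_5,u_6,\ldots,u_l,u_1$ the pattern so that $u_5=a,\ldots,u_l=b,u_1=c$; this closes up consistently precisely because the core has $l-3\equiv 0\pmod 3$ vertices. Rotating $C$ to read $u_3u_4u_5\cdots u_lu_1u_2$ and matching it against the cycle $v_1v_2u_3\cdots u_{3k+1}u_1u_2$ of Lemma~\ref{lem01}(iii) (so that $v_1v_2=u_3u_4$, the pattern runs over $u_5,\ldots,u_l,u_1$, and the lemma's $u_1,u_2$ are our $u_1,u_2$), the two hypotheses I must verify are exactly: (a) the pattern assignment on $u_5,\ldots,u_l,u_1$, and (b) path-extendability of the restriction of the final labeling to the seam path $u_2u_3u_4u_5u_6$. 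The conclusion the lemma then yields is cycle-extendability of type $2$ with respect to the starting vertex $u_1$, i.e.\ with the edges $u_1u_2$ and $u_lu_1$ reserved, which is precisely what is asserted.

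The first concrete step is to choose $f(u_2)$. It must satisfy $|f(u_2)-f(v_2)|\ge 2$ (the edge $u_2v_2$) and $f(u_2)\notin\{f(v_1),f(v_3)\}$ (distance two through $v_2$), so $f(u_2)\in[0,6]\setminus\{f(v_2)-1,f(v_2),f(v_2)+1,f(v_1),f(v_3)\}$, a set of size at least $2$; hence a valid label always exists. Having fixed $f(u_2)$, I would then select the three pattern labels $\{a,b,c\}\subset\{0,2,4,6\}$ and the seam labels $f(u_3),f(u_4)$ so as to meet all remaining adjacency and distance-two constraints at $u_1,u_2,u_3,u_4$ (notably $f(u_3)$ and $u_1=c$ must avoid $f(v_2)$, and $u_2$ must be separated from its cycle-neighbours $u_1=c$ and $u_3$), and, above all, so that the seam path $u_2u_3u_4u_5u_6$ is path-extendable. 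The natural device is Lemma~\ref{le43-3-1}: whenever $f(u_2)$ can be taken even, I would pick the pattern start $a=f(u_5)$ so that $(a,f(u_2))$ is one of the eight admissible even pairs of that lemma, which then supplies a path-extendable labeling of $u_2u_3u_4u_5$ with odd middle labels $f(u_3),f(u_4)$; path-extendability of the whole seam path $u_2u_3u_4u_5u_6$ follows because $u_5,u_6,\ldots$ carry the even pattern (Lemma~\ref{le00}).

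The hard part will be the case in which \emph{no} even label is available for $u_2$, which occurs exactly when $f(v_1),f(v_2),f(v_3)$ occupy enough even values to force $f(u_2)$ odd (for instance $f(v_2)=1$ and $\{f(v_1),f(v_3)\}=\{4,6\}$). Then the even pattern must genuinely be broken at the seam with odd labels among $u_3,u_4$, and path-extendability of $u_2u_3u_4u_5u_6$ has to be checked directly, avoiding the forbidden four-term configurations listed after Definition~\ref{defn:KC} and using Lemma~\ref{le01} to confirm that attached paths of length $2$ or $3$ can always be filled in. I expect the bulk of the write-up to be this finite but fussy verification, organised according to whether $f(u_2)$ is even or odd and, within the odd case, according to the value of $f(v_2)$. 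Finally, the degenerate case $l=3$ (where $k=0$ and the core is empty) lies outside the Lemma~\ref{lem01}(iii) framework and should be handled separately: one labels the triangle $u_1u_2u_3$ by three pairwise $L(2,1)$-compatible labels with $f(u_2)$ chosen compatibly with $f(v_2)$, and checks by hand that a length-$2$ or length-$3$ path can be attached at the single admissible edge $u_2u_3$, which is immediate.
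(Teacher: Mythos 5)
Your overall architecture (label $u_2$ first, then wrap a period-$3$ even pattern around $C$ and control a short seam) is in the right spirit, but the proof as written has genuine gaps, and the most important one is exactly the case you defer. When no even label is available for $u_2$ (e.g.\ $f(v_2)=1$, $\{f(v_1),f(v_3)\}=\{4,6\}$), the vertex $u_2$ must receive an odd label, and then the two available neighbor labels of $f(u_2)$ in $\{0,2,4,6\}$ must be distributed to $u_1$ and $u_3$ \emph{in the correct order}: with $f(u_2)=1$, taking $f(u_1)=6$, $f(u_3)=4$ and $f(u_4)=6$ leaves no admissible label for a vertex of an attached path joined to both $u_2$ and $u_3$ (it must avoid $\{0,1,2\}\cup\{3,4,5\}\cup\{6\}$), whereas $f(u_1)=4$, $f(u_3)=6$ works. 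So the ``finite but fussy verification'' is not a routine check that can be waved at --- it contains the one nontrivial combinatorial obstruction of the whole lemma, and a proof that omits it is incomplete. (The paper resolves this case by giving $u_2$ an odd label and putting the two available even labels on $u_1$ and $u_3$, with the rest of $C$ periodic; even there the order of $a$ and $b$ matters.)

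There are also two smaller problems in the case you do treat. First, Lemma~\ref{le43-3-1} produces \emph{some} odd pair $(f(u_3),f(u_4))$, but $u_3$ is at distance two from $v_2$ through $u_2$, so you additionally need $f(u_3)\neq f(v_2)$; when $f(v_2)$ is odd and $f(u_2)$ is nevertheless even, the lemma gives you no control over which odd label lands on $u_3$, so this constraint is not discharged. Second, path-extendability of the five-vertex seam $u_2u_3u_4u_5u_6$ does not ``follow'' from path-extendability of $u_2u_3u_4u_5$ plus the pattern: the seam has two interior edges, and attachments on $u_4u_5$ (a vertex adjacent to an odd-labeled $u_4$ and to $u_5=a$, at distance two from $u_3$ and $u_6$) lie outside the scope of the four-vertex Lemma~\ref{le43-3-1} and are not covered by Lemma~\ref{le00} either. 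Finally, note that your seam machinery is unnecessary in the easy case: since $l\equiv 0\pmod 3$, whenever $u_2$ can be labeled with an even $a$ the \emph{entire} cycle closes up under a period-$3$ even pattern $b\,a\,c$ with $u_2$ itself inside the pattern, and Lemma~\ref{le00}(ii) gives type-1 (hence type-2) extendability directly --- this is the paper's route, and it avoids both of the issues above.
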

\begin{proof}
If $f(v_2)\in \{0, 2, 4, 6\}$, then  choose $a\in \{0, 2, 4, 6\} \setminus \{f(v_1), f(v_2), f(v_3)\}$ and assign it to $u_2$. Take $b, c\in \{0,2, 4, 6\} \setminus \{a, f(v_2)\}$. We label $u_1, u_3$ by $b, c$ respectively and $u_4, u_5, \ldots, u_{l}$ using pattern $b\, a\, c$.

Assume $f(v_2)\in \{1,3, 5\}$. Suppose first that $|\{f(v_1), f(v_3)\}\cap \{0, 2, 4, 6\}|\leq 1$.  Since $f(v_2)$ has two available neighbor labels in $\{0, 2, 4, 6\}$, we assign its other available neighbor label $a$ to $u_2$. Similarly,  Take $b, c\in \{0,2, 4, 6\} \setminus \{a, f(v_2)\}$. We label $u_1, u_3$ by $b, c$ respectively and $u_4, u_5, \ldots, u_{l}$ using pattern $b\, a\, c$. Now suppose that  $\{f(v_1), f(v_3\} \in \{0, 2, 4, 6\}$. Assign $u_2$ a label from $\{1, 3, 5\} \setminus \{f(v_2)\}$. Then $f(u_2)$ has two available neighbor labels $a, b$ in $\{0, 2, 4, 6\}$. We assign $a$ and $b$ to $u_1$ and $u_3$, respectively. Choose $c\in \{0,2, 4, 6\} \setminus \{a, b\}$ and label $u_4, u_5, \ldots, u_{l}$ using pattern $a\, c\, b$.

In each possibility above, by Lemmas~\ref{le00} and \ref{lem01}, we obtain a 6-$L(2, 1)$-labeling $f_1$ of $H$ with the desired property.
\end{proof}

\begin{lemma}
\label{le44-2} If $l\equiv 1$ (mod 3),
then $f$ can be extended to  a 6-$L(2,1)$-labeling $f_1$ of $K$ such that $f_1|_{C}$ is a cycle-extendable 6-$L(2,1)$-labeling of type 2.
 \end{lemma}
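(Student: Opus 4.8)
The plan is to realize $f_1|_C$ as a pattern labeling to which Lemma~\ref{le00}(iii) (or, when a short transition near $u_2$ is needed, Lemma~\ref{lem01}(iii)) applies. Write $l=3k+1$. By Lemma~\ref{le00}(iii) it suffices to pick three distinct even labels $a,b,c$ and to label $u_3,u_4,\ldots,u_{3k+1},u_1$ using pattern $a\,b\,c$; this forces $f_1(u_3)=a$, $f_1(u_4)=b$, $f_1(u_{3k+1})=b$ and $f_1(u_1)=c$, and automatically makes $f_1|_C$ a cycle-extendable labeling of type 2. The only vertex of $C$ left unlabeled is then $u_2$, which is also the unique vertex of $C$ joined to $P$ (through $u_2v_2$). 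Thus the problem reduces to choosing the pattern together with a label for $u_2$.

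Collecting the $L(2,1)$-conditions that involve the new edge $u_2v_2$ and the new distance-two pairs, I would record exactly two requirements. First, since $v_2$ is at distance two from $u_1$ and $u_3$ through $u_2$, we need $f(v_2)\neq c$ and $f(v_2)\neq a$. Second, since $u_2$ is adjacent to $u_1$, $u_3$ and $v_2$, and is at distance two from $u_4$, $u_{3k+1}$ (both labeled $b$) and from $v_1,v_3$ (through $v_2$), the label $f_1(u_2)$ must differ by at least $2$ from each of $a$, $c$ and $f(v_2)$, and must avoid $b$, $f(v_1)$ and $f(v_3)$. Everything else is already guaranteed: the pattern uses three distinct evens, so adjacent pattern vertices differ by at least $2$ and distance-two pattern vertices are distinct, while $f$ restricted to $P$ is unchanged.

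I would then split on the parity of $f(v_2)$, exactly as in the proof of Lemma~\ref{le44-1}. If $f(v_2)\in\{1,3,5\}$, the conditions $f(v_2)\neq a,c$ hold automatically, so $a,b,c$ may be any three evens; I would give $u_2$ an odd label, namely a common available neighbor label of $a$ and $c$ in $\{1,3,5\}$ that differs from $f(v_2)$ and from any odd value among $f(v_1),f(v_3)$, existence being controlled by Lemma~\ref{le01}. If $f(v_2)\in\{0,2,4,6\}$, I would set $b=f(v_2)$ so that $f(v_2)\neq a,c$ is automatic, and then choose $a,c$ and $f_1(u_2)$ from the remaining labels so as to meet the distance conditions while dodging $f(v_1),f(v_3)$. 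In either branch the freedom to permute which evens play the roles $a,c$ (and which leftover even is $d=\{0,2,4,6\}\setminus\{a,b,c\}$), together with the option of making $u_2$ odd, gives enough room, and I would finish by invoking Lemmas~\ref{le00} and \ref{lem01} in the usual way.

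The main obstacle I anticipate is not producing a valid label for $u_2$ but keeping $f_1|_C$ genuinely type-2 extendable near the seam at $u_2$. Because attachments in $\mathscr{F}(C)$ are allowed on the edges $u_2u_3$ and $u_3u_4$ incident to (or adjacent to) $u_2$, the label of $u_2$ interacts with the extension of those attachments: if $u_2$ were given the leftover even $d$, a length-two path attached on $u_3u_4$ could be forced onto $d$ and fail. Choosing $u_2$ odd whenever possible neutralizes this, but the choice $\{a,c\}=\{2,4\}$ must be avoided, since then no odd label lies at distance $\ge 2$ from both neighbors of $u_2$ (here $M_2\cap M_4=\emptyset$ in the notation of Lemma~\ref{le01}). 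The delicate sub-cases are therefore those in which $f(v_1),f(v_2),f(v_3)$ push $a,c$ towards $\{2,4\}$ or make $f(v_1),f(v_3)$ occupy the odd labels wanted for $u_2$; these are finite in number, and I would dispose of them by an explicit choice of pattern and $u_2$-label, relying on the counting guarantees of Lemma~\ref{le01} and, if a one-vertex transition is convenient, on the path-extendable building blocks of Lemma~\ref{le43-3-1}.
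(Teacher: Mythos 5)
Your overall strategy---impose a three-periodic even pattern on the cycle, case on the parities of $f(v_1),f(v_2),f(v_3)$, and close with Lemmas~\ref{le01}, \ref{le00} and \ref{lem01}---is the paper's strategy, and your first branch (odd $f(v_2)$ with at most one of $f(v_1),f(v_3)$ odd) essentially reproduces the paper's first case; the paper just chooses the odd label of $u_2$ first and lets $u_1,u_3$ take its two available even neighbor labels, which automatically avoids your $\{2,4\}$ obstruction. The genuine gap lies in the sub-cases you defer: the rigid frame you commit to---pattern $a\,b\,c$ on \emph{all} of $u_3,\ldots,u_{3k+1},u_1$ with only $u_2$ free---provably cannot be completed in some of them. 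Take $(f(v_1),f(v_2),f(v_3))=(1,3,5)$. Then $u_2$ cannot be odd (all three odd labels are forbidden by $v_1,v_2,v_3$) and cannot be $a$, $b$ or $c$, so $f_1(u_2)=d$, the fourth even label, with $d\in\{0,6\}$ since $|d-3|\ge 2$. Now $\mathscr{F}(C)$ permits a length-two path attached to $u_2,u_3$ and (in a different member of $\mathscr{F}(C)$) one attached to $u_3,u_4$; each middle vertex sees all four even labels within distance two, so it must receive an odd label in $M_{f(u_2)}\cap M_{f(u_3)}$, respectively $M_{f(u_3)}\cap M_{f(u_4)}$, with $f(v_2)=3$ additionally excluded in the first case. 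Since $M_0=\{3,5\}$, $M_2=\{5\}$, $M_4=\{1\}$, $M_6=\{1,3\}$, for $d=0$ the edge $u_3u_4$ forces $\{f(u_3),f(u_4)\}=\{4,6\}$ and then both $(M_0\cap M_4)\setminus\{3\}$ and $(M_0\cap M_6)\setminus\{3\}$ are empty; for $d=6$ the edge $u_3u_4$ forces $\{f(u_3),f(u_4)\}=\{0,2\}$ and again $(M_6\cap M_0)\setminus\{3\}$ and $(M_6\cap M_2)\setminus\{3\}$ are empty. No permutation of the pattern rescues the scheme, so $f_1|_C$ is never type-2 extendable within your framework. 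A similar dead end occurs in your even branch with $b=f(v_2)$, e.g.\ for $(f(v_1),f(v_2),f(v_3))=(4,2,6)$.

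The missing idea is that the pattern must be allowed to break at the seam. The paper's proof does exactly this: in the all-odd case it assigns odd labels to $u_1$ (and, for $f(v_2)=3$, also to $u_l$), runs the even pattern only on $u_3,\ldots,u_{l-2}$ or a nearby sub-arc, and appeals to Lemma~\ref{lem01} rather than Lemma~\ref{le00}(iii); for instance, with $f(v_2)=3$ it sets $f(u_2)=0$, $f(u_1)=5$, $f(u_l)=3$, $f(u_{l-1})=6$ and labels $u_{l-2},\ldots,u_3$ with pattern $0\,2\,6$. You mention one-vertex transitions via Lemma~\ref{lem01} as a fallback, but the plan you actually commit to excludes them, and the ``finitely many delicate sub-cases'' you postpone are precisely the ones where such a transition is not a convenience but a necessity.
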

\begin{proof} We first assume $f(v_2)\in \{1, 3, 5\}$ and $|\{f(v_1), f(v_3)\}\cap \{1, 3, 5\}|\leq 1$. In this case, there exists a label in $\{1, 3, 5\}$ which can be assigned to $u_2$. There are two available neighbor labels  $a, b\in\{0, 2,4, 6\}$ for $f(u_2)$ such that  $u_1$ and $u_3$  can be assigned $a$ and $b$, respectively. Label $u_4$ by some $c\in \{0, 2, 4, 6\} \setminus \{a, b\}$ and $u_5, \ldots, u_{l}$ using pattern $a\, b\, c$.

Next assume $f(v_1), f(v_2), f(v_3)\in \{1, 3, 5\}$. If $f(v_2)=1$, then label $u_2$ by 6, $u_1, u_{l}, u_{l-1}$ by 3, 0, 2, respectively, and $u_{l-2}, \ldots, u_3$ using pattern $4\, 0\, 2$; if $f(v_2)=3$, then label $u_2$ by 0, $u_1, u_{l}, u_{l-1}$ by 5, 3, 6, respectively, and $u_{l-2}, \ldots, u_3$ using pattern $0\, 2\, 6$; if $f(v_2)=5$, then label $u_2$ by 0, $u_1, u_{l}, u_{l-1}$ by 3, 1, 6, respectively, and $u_{l-2}, \ldots, u_3$ using pattern $0\, 2\, 6$.

Finally, we assume $f(v_2)\in \{0, 2, 4, 6\}$. We label $u_2$ by some $a\in \{0, 2, 4, 6\} \setminus \{f(v_1), f(v_2), f(v_3)\}$. If $a\in \{2, 4\}$ has only one available neighbor label $d\in \{1, 3, 5\}$, then we can assign $d$ to $u_1$; if $a\in \{0,6\}$, then we choose its available neighbor label $d=3$ and assign 3 to $u_1$.  Moreover, $d$ has another available neighbor label $b$ in $\{0, 2, 4, 6\}$. Choose $c\in \{0, 2, 4, 6\} \setminus \{a, b, f(v_2)\}$. We label $u_2, u_3, \ldots, u_{l-2}$ using pattern $c\, b\, a$, and $u_{l-1}, u_l$ by $c, b$, respectively.

In each case above, by Lemmas~\ref{le00} and \ref{lem01}, we obtain a 6-$L(2, 1)$-labeling $f_1$ of $K$ with the desired property.
\end{proof}

\begin{thm}
\label{le44}
$f$ can be extended to a 6-$L(2,1)$-labeling $f_1$ of $K$ such that $f_1|_{C}$ is a cycle-extendable 6-$L(2,1)$-labeling of type 2.
\end{thm}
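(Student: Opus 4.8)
The plan is to reduce Theorem~\ref{le44} to the single outstanding residue class $l \equiv 2 \pmod 3$. Indeed, Lemmas~\ref{le44-1} and \ref{le44-2} already establish the conclusion when $l \equiv 0 \pmod 3$ and when $l \equiv 1 \pmod 3$, respectively, so once the case $l \equiv 2 \pmod 3$ is settled the theorem follows immediately by combining the three residue classes. Thus essentially all of the remaining work is the construction of a suitable extension in the case $l \equiv 2 \pmod 3$.

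For that case I would follow the same overall strategy used in the two preceding lemmas. The crucial local constraints come only from the three path labels $f(v_1), f(v_2), f(v_3)$: since $v_2$ is adjacent to $u_2$ and $v_1, v_3$ are at distance two from $u_2$ (through $v_2$), any label chosen for $u_2$ must lie in $[0,6]\setminus\{f(v_1),f(v_3)\}$ and differ from $f(v_2)$ by at least two; moreover $u_1$ and $u_3$ are at distance two from $v_2$, so neither may be assigned $f(v_2)$. I would first fix a label for $u_2$ respecting these conditions, splitting into cases according to whether $f(v_2)\in\{0,2,4,6\}$ or $f(v_2)\in\{1,3,5\}$, and in the latter case further according to $|\{f(v_1),f(v_3)\}\cap\{1,3,5\}|$, exactly as in Lemmas~\ref{le44-1} and \ref{le44-2}. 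Whenever possible I would give $u_2$ a label in $\{0,2,4,6\}$ so that the bulk of the cycle can carry a repeating even pattern $a\,b\,c$ with $\{a,b,c\}\subset\{0,2,4,6\}$; when $u_2$ is forced to be odd, Lemma~\ref{le01} guarantees that it has two available even neighbor labels, which I would place on $u_1$ and $u_3$.

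The heart of the argument is then to label the cycle so that $f_1|_C$ is cycle-extendable of type~2, i.e.\ so that Lemma~\ref{lem01}(iii) applies. I would propagate a fixed pattern $a\,b\,c$ around $C$ starting from $u_3$. Because $l \equiv 2 \pmod 3$, the pattern does not close up evenly: there is a residue of two vertices near the seam $u_1u_2$ that cannot be absorbed by the pure pattern. The main obstacle is precisely this seam. I would correct it by inserting a short non-pattern block near $u_1$ (or, symmetrically, by re-assigning $u_3,u_4$), whose labels are chosen using Lemma~\ref{le01} to supply the needed available even and odd neighbor labels, so that the stretch $u_3u_4\ldots u_{3k+1}u_1$ is labeled by a genuine pattern while the few vertices around the seam together with $u_2$ form the path-extendable initial segment required in Lemma~\ref{lem01}(iii). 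As in the earlier proofs, I expect this to split into several subcases driven by the parity of $f(v_2)$ and the relative values of $f(v_1),f(v_3)$, and these may be most cleanly recorded in a short table of partial labelings analogous to Table~1.

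Finally, in each subcase the verification that the constructed $f_1$ is a valid $6$-$L(2,1)$-labeling of $K$ and that $f_1|_C$ is cycle-extendable of type~2 follows from Lemmas~\ref{le00} and \ref{lem01}, exactly as at the ends of Lemmas~\ref{le44-1} and \ref{le44-2}. The genuinely delicate part is guaranteeing, for every admissible triple $(f(v_1),f(v_2),f(v_3))$, that the two interfaces where the correction block meets the repeating pattern both satisfy the $L(2,1)$-condition; Lemma~\ref{le01} is the tool that makes each such local choice available, and the bookkeeping over the residue-$2$ seam is where the real effort lies.
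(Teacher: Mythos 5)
Your plan is essentially the paper's own proof: reduce to the case $l\equiv 2\pmod 3$ via Lemmas~\ref{le44-1} and \ref{le44-2}, split into subcases according to whether $f(v_2)\in\{1,3,5\}$ and according to $|\{f(v_1),f(v_3)\}\cap\{1,3,5\}|$, assign explicit labels to $u_2$ and a few seam vertices ($u_1,u_l,u_{l-1},u_{l-2}$ or $u_3,u_4$), fill the remainder of $C$ with a repeating even pattern $a\,b\,c\subset\{0,2,4,6\}$ chosen via Lemma~\ref{le01}, and conclude with Lemmas~\ref{le00} and \ref{lem01}. Be aware, however, that the paper's argument for this case consists almost entirely of the concrete label assignments in each subcase, which you defer as ``bookkeeping''; as written your text is a correct outline of the same route rather than a finished proof, and completing it requires exhibiting and checking those assignments for every admissible triple $(f(v_1),f(v_2),f(v_3))$.
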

\begin{proof}
By Lemmas~\ref{le44-1} and \ref{le44-2}, we are left with the case $l\equiv 2$ (mod 3).  We first assume that $f(v_1), f(v_2), f(v_3)\in \{1, 3, 5\}$. If $f(v_2)=1$, then we label $u_2, u_1, u_{l}, u_{l-1}, u_{l-2}$ by $6, 3, 1, 4, 0$, respectively, and label $u_{l-3}, \ldots, u_3$ using pattern $6\, 4\, 0$.
If $f(v_2)=3$, then label $u_2, u_1, u_{l}, u_{l-1}, u_{l-2}$ by $6, 1, 5, 0, 2$, respectively, and $u_{l-3}, \ldots, u_3$ using pattern $6\, 0\, 2$. If $f(v_3)=5$, then label $u_2, u_1, u_{l}, u_{l-1}, u_{l-2}$ by $0, 3, 5, 2, 4$, respectively, and $u_{l-3}, \ldots, u_3$ using pattern $0\, 2\, 4$.

Next we assume $f(v_2)\in \{1, 3, 5\}$ and $|\{f(v_1),  f(v_3)\}\cap \{1, 3, 5\}|\leq 1$.
We assign $u_2$ a label from $\{1, 3, 5\} \setminus \{f(v_1), f(v_2), f(v_3)\}$ and then assign $u_1$ a label from $\{1, 3, 5\} \setminus \{f(u_2), f(v_2)\}$. Let $x\in \{1, 3, 5\}$. Denote by $L_x$ the set of available neighbor labels for $x$ in $\{0, 2, 4, 6\}$. Note that for each of $f(u_1)$ and $f(u_2)$, there are two available neighbor labels in $\{0, 2, 4, 6\}$. By Lemma 2.4, $|L_{f(u_1)} \setminus L_{f(u_2)}|=2$ or $1$. In the former case, we can choose an available neighbor label $a\in \{0, 2, 4, 6\}$ for $f(u_1)$ and an available neighbor label $b\in \{0, 2, 4, 6\}$ for $f(u_2)$ such that $a\not=b$. Choose $c\in \{0, 2, 4, 6\} \setminus \{a, b, d\}$, where $L_{f(u_1)}=\{a, d\}$. In the latter case, let $b\in L_{f(u_1)} \setminus L_{f(u_2)}$, $a\in L_{f(u_1)}\setminus \{b\}$ and $c\in \{0, 2, 4, 6\} \setminus \{a, b, d\}$, where $L_{f(u_1)}=\{b, d\}$. In both cases we label $u_3, u_4, \ldots, u_l$ using pattern $b\, c\, a$.

Finally, we assume $f(v_2)\in \{0,2, 4, 6\}$. We assign $u_2$ a label $a\in \{0, 2, 4, 6\} \setminus \{f(v_1), f(v_2), f(v_3)\}$. Then we assign $u_1, u_l$ labels $d_1, d_2$ from $\{1, 3, 5\}$, respectively, such that $a$ is not an available neighbor label of $d_2$. Let $b$ be an available neighbor labels in $\{0, 2, 4, 6\}$ for $d_2$. Choose $c\in \{0, 2, 4, 6\} \setminus \{a, b, f(v_2)\}$. We label $u_3, u_4$ by $c, b$, respectively, and $u_5 \ldots, u_{l-1}$ using pattern $a\, c\, b$.

In each case above, by Lemmas~\ref{le00} and \ref{lem01}, we obtain a 6-$L(2, 1)$-labeling $f_1$ of $K$ with the desired property.
\end{proof}

%upper bound
\section{Proof of Theorem \ref{thm1}}
\label{sec:ubound}

Throughout this section $G$ is an outer plane graph with $\Delta=3$. A path $P = v_1v_2\ldots v_t$ of $G$ is called a {\em branch} if $d(v_1)\geq 3$, $d(v_t)\geq 3$ and $d(v_i)=2$ for $2 \le i \le t-1$. For two blocks $A$ and $B$ of $G$, define $d(A, B)=\min \{d(x, y): x\in V(A), y\in V(B)\}$, where $d(x, y)$ is the distance in $G$ between $x$ and $y$.
Let $B_1$ and $B_2$ be two blocks of $G$ such that $d(B_1, B_2)$ is minimized. Since $\Delta=3$, $B_1$ is joined to $B_2$ by a branch of length at least one.

\medskip

\n{\bf Proof of Theorem~\ref{thm1}}. Suppose to the contrary that not every outerplanar graph with maximum degree 3 satisfies $\lambda \le 6$. Let $G$ be a smallest counterexample. That is, $G$ is an outerplanar graph of maximum degree 3 having no 6-$L(2,1)$-labelings such that $|V(G)|$ is minimum. Clearly, $|V(G)|\geq 4$ and $G$ is connected by the minimality of $G$. We prove the following claim first.

\medskip
\n{\bf Claim.} $G$ is 2-connected.
\medskip

\n{\em Proof of the Claim.}
Suppose $G$ is not 2-connected. Since $G$ is connected, it has a cut edge. By Lemma~\ref{le42}, $G$ has no vertex of degree 1. Thus $G$ consists of blocks and branches connecting blocks. We construct a graph $X$ as follows: $V(X)$ is the set of blocks of $G$; for $x, y\in V(X)$, let $B_x$ and $B_y$ denote the blocks of $G$ corresponding to $x$ and $y$, respectively.  Vertex $x$ is adjacent to vertex $y$ in $X$ if and only if block $B_x$ is connected to block $B_y$ by a branch of $G$. It is obvious that $X$ is a tree. Let $u$ be a vertex of $X$ with degree one and $v$ the unique neighbor of $u$ in $X$. Let $P=v_1v_2\ldots v_k$ be the branch connecting $B_u$ and $B_v$, where $v_1\in V(B_v)$ and $v_k\in V(B_u)$. Then $k\geq 2$ as $\Delta=3$. Let $G_1$ denote the graph obtained from $G-V(B_u)$ by deleting $v_2, \ldots, v_{k-1}$. Since $|V(B_u)|\geq 3$, we have $|V(G_1)|\leq |V(G)|-2$. By the choice of $G$, $G_1$ has a 6-$L(2, 1)$-labeling. By Lemma~\ref{le42}, $G-V(B_u)$ has a 6-$L(2, 1)$-labeling.

If $k=2$, let $u_1$ and $u_2$ be two neighbors of $v_1$ in $B_v$ and let $B_u'$ be the graph induced by $V(G_u)\cup \{v_1, u_1, u_2\}$. Note that $u_1v_1u_2$ has a 6-$L(2,1)$-labeling which, by Theorems~\ref{le44} and \ref{le43}, can be extended to a 6-$L(2,1)$-labeling of $B_u'$. Thus $G$ has a 6-$L(2,1)$-labeling, contradicting our assumption.

Thus we assume $k\geq 3$. Define $G_2=G-(V(G_1)\cup \{v_2,\ldots v_{k-3}\})$. That is, $G_2$ is obtained from the block $B_u$ by adding the path $v_kv_{k-1}v_{k-2}$.  Note that $v_{k-1}$ and $v_{k-2}$ have been assigned labels from [0, 6]. To prove Theorem~\ref{thm1}, it is sufficient to prove that the existing 6-$L(2, 1)$-labeling of $v_{k-2}v_{k-1}$ can be extended to a 6-$L(2, 1)$-labeling of $G_2$. To apply Theorem~\ref{le44}, we construct a graph $G^*_2$ obtained from $G_2$ by adding to $G_2$ a new vertex $w$ together with an edge joining $w$ and $v_{k-1}$.  Now we label $w$ as follows: if $f(v_{k-1})\in \{0, 2, 4, 6\}$, then $w$ is assigned a label from $\{0, 2, 4, 6\}\setminus \{f(v_{k-2}), f(v_{k-1})\}$; if $f(v_{k-1})\in \{1, 3, 5\}$, then $w$ is assigned a label from $\{1, 3, 5\}\setminus \{f(v_{k-2}), f(v_{k-1})\}$. Clearly, our labeling of $wv_{k-1}v_{k-2}$ is a 6-$L(2,1)$-labeling. By Theorems~\ref{le44} and \ref{le43}, the 6-$L(2,1)$-labeling of $wv_{k-1}v_{k-2}$ can be extended to a 6-$L(2,1)$-labeling $f$ of $G^*_2$. Clearly, $f|_{G_2}$ is a 6-$L(2, 1)$-labeling of $G_2$, which together with a 6-$L(2,1)$-labeling of $G-V(B_u)$ gives a 6-$L(2,1)$-labeling of $G$, a contraction. Therefore, $G$ is 2-edge-connected. Since $G$ is an outer plane graph with $\Delta(G)=3$, $G$ must be 2-connected.
$\square$

\medskip
By the Claim, $G$ is 2-connected. If $|V(G)|=4$, then $G$ is isomorphic to the complete $K_4$ with one edge removed, and so $G$ has a 6-$L(2,1)$-labeling. Assume $|V(G)|\geq 5$. Since $\Delta=3$, $G$ contains at least two vertices of degree 3. It follows that $G$ contains two adjacent faces $F_1$ and $F_2$. If both $F_1$ and $F_2$ are two 3-faces, then by Lemma~\ref{l1}, $G$ is isomorphic to the complete $K_4$ with one edge removed, contracting $|V(G)|\geq 5$. Thus $G$ contains a face $F$ with $|\partial F|\geq 4$. Denote $\partial F=v_1v_2\ldots v_l$. We assign labels from $[0, 6]$ to the vertices of $\partial F$ in the follow way.

If $l\equiv 0$ (mod 3), then label $v_1, v_2, \ldots, v_l$ using pattern 0 2 4;  if $l\equiv 1$ (mod 3), then label $v_1$ by 3 and $v_2, \ldots, v_l$ using pattern $6\, 4\, 0$; if $l\equiv 2$ (mod 3), then label $v_1, v_2$ by 3, 1, respectively, and $v_3, \ldots, v_l$ using pattern $4\, 2\, 0$. By Lemmas~\ref{le00} and \ref{lem01}, the
labeling of $\partial F$ is a cycle-extendable 6-$L(2, 1)$-labeling of type 1.  If $G=F$, we are done. Assume $G\not=F$. Let $F'$ be a face adjacent to $F$. Since $G$ is an outer plane graph and $G$ is 2-connected, $|E(F)\cap E(F')|=1$. We assume, without loss of generality, that $E(F)\cap E(F')=v_2v_3$.  By Theorem~\ref{le43}, the 6-$L(2, 1)$-labeling of $v_1v_2v_3v_4$ can be extended to a path-extendable 6-$L(2,1)$-labeling of $\partial F'-v_2v_3$. Repeating the extendable procedure above, finally we obtain that $G$ has a 6-$L(2,1)$-labeling. This contradiction proves the upper bound in Theorem~\ref{thm1}.

In particular, we have $\lambda(G(l)) \le 6$. This and Theorem \ref{lower} imply $\lambda(G(l)) = 6$ when $l \ge 4$ is not a multiple of $3$.
$\blacksquare$

{\small

}

\end{document}